\documentclass[11pt]{amsart}

\usepackage[a4paper,left=30mm,right=30mm,top=30mm,bottom=26mm]{geometry}
\usepackage{amssymb,mathrsfs}

\usepackage{xcolor}
\definecolor{linkblue}{RGB}{38,68,105}
\definecolor{citegreen}{RGB}{45,88,68}

\usepackage[colorlinks=true,linkcolor=linkblue,citecolor=citegreen,urlcolor=linkblue,pdfborder={0 0 0}]{hyperref}

\newcommand{\C}{\mathbb C}
\newcommand{\R}{\mathbb R}
\newcommand{\Q}{\mathbb Q}
\newcommand{\Z}{\mathbb Z}
\newcommand{\N}{\mathbb N}
\newcommand{\A}{\mathbb A}
\newcommand{\PP}{\mathbb P}
\newcommand{\OO}{\mathcal O}
\newcommand{\MM}{\mathcal M}
\newcommand{\RR}{\mathscr R}

\DeclareMathOperator{\gr}{gr}
\DeclareMathOperator{\Rees}{Rees}
\DeclareMathOperator{\Spec}{Spec}
\DeclareMathOperator{\Supp}{Supp}
\DeclareMathOperator{\Ann}{Ann}
\DeclareMathOperator{\Tor}{Tor}
\DeclareMathOperator{\im}{im}
\DeclareMathOperator{\Frac}{Frac}
\DeclareMathOperator{\Ch}{Ch}
\DeclareMathOperator{\CC}{CC}
\DeclareMathOperator{\conv}{conv}

\DeclareMathOperator{\spc}{sp}
\newcommand{\ini}{\operatorname{in}}

\newcommand{\DD}{\mathcal D}
\newcommand{\RH}{\mathbf R\mathcal Hom}
\newcommand{\one}{\mathbf 1}
\DeclareMathOperator{\Ext}{Ext}
\DeclareMathOperator{\qdeg}{qdeg}
\DeclareMathOperator{\rank}{rank}
\DeclareMathOperator{\depth}{depth}

\newtheorem{theorem}{Theorem}[section]
\newtheorem{proposition}[theorem]{Proposition}
\newtheorem{lemma}[theorem]{Lemma}
\newtheorem{corollary}[theorem]{Corollary}
\theoremstyle{definition}
\newtheorem{definition}[theorem]{Definition}
\theoremstyle{remark}
\newtheorem{remark}[theorem]{Remark}
\newtheorem{example}[theorem]{Example}
\numberwithin{equation}{section}

\title[Characteristic cycles and the Gevrey filtration]{Characteristic cycles and the Gevrey filtration of irregularity in hypergeometric families}

\author{Sheng Tan}
\address{School of Mathematical Sciences, Capital Normal University, Beijing 100048, China} 
\email{tansheng2018@outlook.com}

\subjclass[2020]{Primary 32C38; Secondary 33C70, 14M25, 32S40}

\keywords{$D$-modules, $A$-hypergeometric systems, characteristic cycles, Gevrey irregularity, Euler--Koszul homology, toric geometry, upper semicontinuity}

\thanks{This work is supported by NSFC (Grant No.~12601116) and BJNSF (Grant No.~1264052).}

\date{}

\begin{document}

\begin{abstract}
We show that the characteristic cycle multiplicities of $A$-hypergeometric systems are upper semicontinuous in the parameter for every rational projective weight, thereby proving a conjecture of Schulze and Walther [Duke Math. J. \textbf{142} (2008), 465--509]. The proof uses a specialization formula for filtered families satisfying a fixed product support condition, expressing the difference between the special and generic characteristic cycles as the characteristic cycle of the first Tor module. Combined with the Gevrey index theorem, this yields upper semicontinuity of generic Gevrey irregularity and of each of its graded dimensions. We also construct non-Cohen--Macaulay semigroup rings whose Gevrey dimensions are constant in the parameter despite the presence of a genuine irregular slope, showing that the Gevrey jump locus need not coincide with the full exceptional arrangement defined by Ext.
\end{abstract}

\maketitle

\tableofcontents


\section{Introduction}\label{sec:intro}

The characteristic cycle of a system of differential equations records its singular directions together with their multiplicities, which are related to local properties of its solutions through microlocal index theory \cite{Kas85,KS90}. For $A$-hypergeometric systems, its support is independent of the parameter, while its coefficients may change. We study these changes through a formula for specialization of characteristic cycles and apply the same formula to the Gevrey filtration of irregularity.

Let $A=(a_1,\ldots,a_n)\in\Z^{d\times n}$ be an integer matrix of rank $d$ with nonzero columns, and assume that $\N A$ is pointed. Put
\[
D=\C[x_1,\ldots,x_n]\langle\partial_1,\ldots,\partial_n\rangle,
\quad P=\C[\partial_1,\ldots,\partial_n],
\quad S_A=\C[\N A]=P/I_A,
\]
where $I_A$ is generated by the binomials $\partial^p-\partial^q$ with $Ap=Aq$. For $\beta\in\C^d$, set
\begin{equation}\label{eq:intro-gkz}
M_A(\beta)=D\big/\bigl(DI_A+\textstyle\sum_{i=1}^dD(E_i-\beta_i)\bigr),
\quad E_i=\sum_{j=1}^n a_{ij}x_j\partial_j.
\end{equation}
The theory originates in the work of Gel'fand, Graev, and Zelevinsky \cite{GGZ87}; these $A$-hypergeometric systems, commonly called GKZ systems, were introduced and developed by Gel'fand, Kapranov, and Zelevinsky \cite{GZK89,GKZ90}. Their toric geometry and discriminants are treated in \cite{GKZ94}, and their initial ideals, subdivisions, and Gr\"obner deformations in \cite{Stu96,SST00}; see \cite{RSSW21} for a survey.

A rational weight $L=(u,v)\in\Q^{2n}$ is projective if $u_j+v_j=c>0$ for every $j$. We use $L$ to define the filtration on $D$ by assigning weight $u_j$ to $x_j$ and weight $v_j$ to $\partial_j$. For a projective weight, the associated graded ring is $\gr^L D=W:=\C[x,\xi]$. The $(A,L)$-umbrella $\Phi_A^L$ indexes the characteristic components:
\begin{equation}\label{eq:intro-cycle}
\CC^L(M_A(\beta))=\sum_{\tau\in\Phi_A^L}\mu_{A,0}^{L,\tau}(\beta)[\Lambda_\tau],
\quad \Lambda_\tau=\overline{C_A^\tau}.
\end{equation}
Here $C_A^\tau$ is the conormal space associated with a torus orbit in the convention of \cite{SW08}; see Section~\ref{sec:gkz}. Although the varieties $\Lambda_\tau$ are irreducible of dimension $n$ and independent of $\beta$, their multiplicities may vary with $\beta$.

For the order filtration $F=(0,\mathbf1)$, the coefficient of the zero section is the holonomic rank, whose generic value is the normalized volume \cite{Ado94}. Early work on exceptional parameters and rank jumps includes \cite{ST98,CDD99,Mat01,Mat03}, with further studies of simplicial configurations and toric threefolds in \cite{Sai02,MM06,Oku06}. Matusevich, Miller, and Walther proved that the rank is upper semicontinuous and identified exceptional parameters through lower local cohomology \cite{MMW05}. Subsequent work developed the theory of holonomic families and ranking toric modules \cite{BGM15,Ber11}, and obtained unbounded rank gaps and bounds relative to volume \cite{MW07,BF22,BF24}. The variation of series solutions on parameter strata is studied in \cite{BFM16,BFM18}.

For general projective weights, Schulze and Walther computed the generic multiplicities and proved that they are lower bounds at every parameter \cite[Theorems~4.21 and~4.28]{SW08}. They ask in \cite[Conjecture~4.27]{SW08} whether each coefficient $\mu_{A,0}^{L,\tau}(\beta)$ in \eqref{eq:intro-cycle} is upper semicontinuous. Berkesch and Fern\'andez-Fern\'andez computed the cycles at arbitrary parameters and proved the conjecture for convex filtrations at the zero section, and under $\tau$-convexity and a pyramid condition for other components \cite[Theorems~4.1 and~6.1, Corollary~6.6]{BFF20}. The following theorem proves \cite[Conjecture~4.27]{SW08}.

\begin{theorem}\label{thm:gkz}
Let $A\in\Z^{d\times n}$ be an integer matrix of rank $d$ with nonzero columns, and assume that $\N A$ is pointed. Let $L=(u,v)\in\Q^{2n}$ be a projective weight. For every $\tau\in\Phi_A^L$ and every integer $r$, the set
\[
\{\beta\in\C^d:\mu_{A,0}^{L,\tau}(\beta)\ge r\}
\]
is Zariski closed.
\end{theorem}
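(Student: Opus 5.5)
We will follow the strategy the abstract points to: put all parameters into a single filtered family over the parameter space and specialize. Set $R=\C[b_1,\ldots,b_d]$ and consider the family
\[
\MM=(D\otimes_\C R)\big/\bigl((D\otimes R)I_A+\textstyle\sum_i (D\otimes R)(E_i-b_i)\bigr).
\]
For each $\beta\in\C^d$ we have $\MM\otimes_R R/\mathfrak m_\beta=M_A(\beta)$. We give $\MM$ the filtration induced by $L$, with $b_i$ placed in $L$-degree $0$; this is harmless, since $E_i$ is homogeneous of degree $c$ for a projective weight, so the $\beta$-shift does not affect the principal symbols. The first step is to show that $\gr^L\MM$, a finitely generated $W\otimes R$-module, is supported on $\bigl(\bigcup_{\tau\in\Phi_A^L}\Lambda_\tau\bigr)\times\C^d$. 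This is the fixed product support condition. To verify it we will bound $\gr^L\MM$ by the $L$-initial ideal of $I_A$ together with the symbols $\sum_j a_{ij}x_j\xi_j$. Its support is then independent of $b$ and equals the union of the $\Lambda_\tau$, following \cite{SW08}.

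The second step invokes the specialization formula for filtered families established earlier in the paper. For a point $\beta$ and a general curve, or more simply a general line $\ell\subset\C^d$ through $\beta$, let $R_\ell$ be the local ring of $\ell$ at $\beta$, which is a DVR with uniformizer $t$. The formula gives
\[
\CC^L(\MM_\ell\otimes R_\ell/t)\;=\;\CC^L(\MM_\ell\otimes \Frac R_\ell)\;+\;\CC^L\bigl(\Tor_1^{R_\ell}(\MM_\ell,R_\ell/t)\bigr),
\]
where the middle term is the cycle at the generic point of $\ell$. The Tor-term is a genuine module over $D$, so its characteristic cycle is effective. Consequently the coefficient of $[\Lambda_\tau]$ at $\beta$ is at least the coefficient at the generic point of any curve through $\beta$. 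One point needs care here. We must check that specializing the filtration, that is, taking $\gr$ and then reducing mod $t$, agrees with taking $\gr$ of the specialized module up to the Tor correction. The earlier formula is designed exactly to handle this, so we will only verify its hypotheses: finite generation, a good filtration, and the fixed support.

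The third step turns this into Zariski closedness. We stratify $\C^d$ constructibly, for instance by generic flatness of $\gr^L\MM$ localized at the generic point of each $\Lambda_\tau\times Z$ for irreducible closed $Z$, so that $\mu^{L,\tau}(\beta)$ is constructible in $\beta$. Indeed, on a dense open subset of each closed irreducible $Z$ the multiplicity equals the length at the generic point of $\Lambda_\tau$ of the flat family, and is therefore constant. A constructible function that can only increase under specialization along curves, as shown in step two, has Zariski closed superlevel sets. We will use the standard argument: the closure of a constructible set is obtained by adding limits along curves.

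We expect the main obstacle to be step two in its non-generic form. The specialization formula must be applied to a family restricted to a curve inside a lower-dimensional stratum $Z$, rather than to the full family. To apply it we will base change $\MM$ to the normalization of a curve in $\overline{Z}$, where the generic value equals the stratum value, and check that the fixed product support condition persists after this base change. It does persist, since support can only shrink under base change. We must also make sure the Tor term is computed with the filtration, so that its cycle is a nonnegative combination of the $[\Lambda_\tau]$.
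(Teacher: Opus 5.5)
Your proposal is correct and follows essentially the same route as the paper. The paper forms the relative module $\MM_B$ with the $b_i$ in filtration degree zero and gets the product support condition from $J_{A,L}W_B\subseteq\Ann(\gr_F\MM_B)$ together with \cite{SW08}. It then applies Theorem~\ref{thm:general}: a generic-freeness stratification, curve selection through boundary points of strata, normalization to a DVR, and the Tor formula of Theorem~\ref{thm:curve}. Its preliminary reduction to $\Z A=\Z^d$ is not needed for your argument.
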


Let $B$ be a finitely generated commutative $\C$-algebra, and put $D_B=D\otimes_\C B$, with the elements of $B$ in filtration degree zero. Suppose that a finite $D_B$-module $M$ has a good $L$-filtration such that
\begin{equation}\label{eq:intro-support}
\Supp\gr^L M\subseteq\bigcup_{i=1}^q\Lambda_i\times\Spec B,
\end{equation}
where the $\Lambda_i\subseteq\Spec W$ are fixed, distinct, irreducible subvarieties of dimension $n$. Write $M_b=M\otimes_B\kappa(b)$ for a closed fiber. 

\begin{theorem}\label{thm:general}
Assume \eqref{eq:intro-support}. There is a finite stratification of $\Spec B$ by locally closed algebraic subsets on which the multiplicities of $\CC^L(M_b)$ along $\Lambda_1,\ldots,\Lambda_q$ are simultaneously constant. Moreover, each multiplicity is Zariski upper semicontinuous on the closed points of $\Spec B$.
\end{theorem}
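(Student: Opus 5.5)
We reduce the statement to commutative algebra on the finitely generated graded $W\otimes_\C B$-module $N=\gr^L M$. We fix a good $L$-filtration as in \eqref{eq:intro-support}. By generic flatness, after a finite stratification of $\Spec B$ into reduced affine locally closed pieces $T$, both $N_T=N\otimes_B\OO_T$ and all of its graded pieces are flat over $\OO_T$. We also arrange that $M_T=M\otimes_B\OO_T$ has $\gr^L M_T\cong N_T$, which is possible because the Tor obstructions vanish generically. For a closed point $b$, the filtration on $M_b$ induced from the filtration on $M$ is good. Its associated graded module is a quotient of $N\otimes_B\kappa(b)$, and the kernel is controlled by $\Tor_1^B(\,\cdot\,,\kappa(b))$ of the filtered pieces. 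The characteristic cycle does not depend on the choice of good filtration, so $\CC^L(M_b)$ is the cycle of $\gr(M_b)$, and this coincides with the cycle of $N\otimes_B\kappa(b)$ once the Tor correction is accounted for. This is the specialization formula announced in the abstract:
\[
\CC^L(M_b)=\CC\bigl(N\otimes_B\kappa(b)\bigr)-\CC\bigl(\Tor_1^B(N,\kappa(b))\bigr).
\]
We justify it using the Rees module and the fact that $\Tor$ of the Rees module over $B$ localizes correctly.

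Next we use \eqref{eq:intro-support}. Every fiber $N\otimes_B\kappa(b)$ is supported in $\bigcup\Lambda_i$. The multiplicity along $\Lambda_i$ is the length of the localization of this fiber at the generic point $\eta_i$ of $\Lambda_i$. Put $R_i=(W\otimes B)_{\eta_i\times\Spec B}$, which is localization at the prime $\eta_i\otimes B$. The module $N_i=N\otimes R_i$ is then a finite module over $R_i$ whose support lies in the fiber of a zero-dimensional local scheme over $B$. Concretely, $N_i$ is a module over $\OO_{\Lambda_i,\eta_i}$-Artinian thickenings relative to $B$. After passing to a filtration by copies of $\kappa(\eta_i)\otimes B$-modules, $N_i$ becomes a finite module over $K_i\otimes_\C B$, where $K_i=\kappa(\eta_i)$. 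Each multiplicity is therefore $m_i(b)=\ell\bigl(N_i\otimes_B\kappa(b)\bigr)-\ell\bigl(\Tor_1^B(N_i,\kappa(b))\bigr)$. By the formula above, this is the length of the honest graded fiber at $\eta_i$, and it equals an Euler-characteristic-type alternating sum. Each length is an upper semicontinuous constructible function of $b$, by semicontinuity of fiber dimensions for a finite module over $K_i\otimes B$, which is a faithfully flat base change of $B$. On the strata where $N_i$ is flat the Tor term vanishes, so $m_i$ is constant there. This gives the finite stratification.

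The main obstacle is upper semicontinuity itself. Both terms jump up at special points, so semicontinuity of the difference is not automatic. We handle it as follows. The characteristic cycle of $M_b$ can be computed from any good filtration, and it is an invariant of the holonomic-like module. Let $b$ be a closed point and choose a curve (or DVR) $\Spec R\to\Spec B$ through $b$ whose generic point $\xi$ lies in the open stratum containing a given nearby point. Over $R$ we replace $M_R$ by its $R$-torsion-free quotient $\bar M$. This quotient is flat over $R$, its special fiber $\bar M_b$ is a quotient of $M_b$, and it has the same generic fiber. Flatness of $\gr\bar M$ (obtained after saturating the filtration) gives constancy of the cycle in the family $\bar M$. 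Hence $\CC(M_\xi)=\CC(\bar M_b)\le\CC(M_b)$ componentwise, since a quotient of a module with support in $\bigcup\Lambda_i$ has smaller multiplicities along each $n$-dimensional $\Lambda_i$. This last step uses the fact that multiplicity is additive on short exact sequences supported in dimension $n$ at each $\eta_i$, and this is exactly where the fixed product support condition \eqref{eq:intro-support} enters. Combined with constructibility, specialization-monotonicity shows that each superlevel set is closed.
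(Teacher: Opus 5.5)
\textbf{There is a genuine gap.} Your overall architecture matches the paper's: generic freeness for the stratification, a curve from an open stratum to the special point, passage to the $R$-torsion-free quotient $\bar M$, and additivity $\CC(M_b)=\CC(\bar M_b)+\CC(T/tT)$. The gap is the step that carries the content of the theorem, namely constancy of $\CC^L$ in the torsion-free family $\bar M$, which you justify by saying that ``$\gr\bar M$ becomes flat after saturating the filtration.'' The saturated filtration is $F^{\mathrm{sat}}_k\bar M=\bar M\cap(F_k\bar M\otimes_RK)$. Its Rees module is
\[
\mathscr Q^{\mathrm{sat}}=\bigcup_a(\mathscr Q:t^a),
\]
so $\mathscr Q^{\mathrm{sat}}/\mathscr Q=H^0_{(t)}(\mathscr Q[h^{-1}]/\mathscr Q)$. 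This is an increasing union whose successive steps are the modules $H^0_{(h)}(\mathscr Q_j/t\mathscr Q_j)$. Nothing you say shows that this chain stabilizes, i.e.\ that $F^{\mathrm{sat}}$ is good. Flatness of a $\gr$ that is not finite over $W_R$ gives nothing.

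This is not a formality. In Example~\ref{ex:bad-support}, $\bar M=D_R/D_R(t\partial_x-1)$ is $R$-torsion-free, but $t$ acts invertibly on it, so $\bar M=K[x]$. Then $F^{\mathrm{sat}}_0\bar M=K[x]$ is not finite over $R[x]$. There $\gr^{\mathrm{sat}}$ is flat over $R$, yet the cycle drops from $[Z]$ to $0$. Your constancy argument never invokes \eqref{eq:intro-support}; you use it only for the comparison of $\bar M_b$ with $M_b$. As written, it would therefore also ``prove'' constancy in that example.

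To close the gap you have two options. You can prove from the product support condition that $\mathscr Q^{\mathrm{sat}}$ is finite over $\RR_R$, which is a nontrivial finiteness statement and exactly where the hypothesis would have to enter. Alternatively, you can avoid saturation, as the paper does:
\begin{itemize}
\item keep the given filtration;
\item use the isomorphism $0:_{\mathscr Q/t\mathscr Q}h\simeq0:_Gt$ together with Lemma~\ref{lem:nilpotent} to obtain \eqref{eq:defect-length}, i.e.\ $[\gr(\bar M/t\bar M)]=[G/tG]-[0:_Gt]$ at each $\mathfrak q_i$;
\item show through a prime filtration of $G_{\mathfrak q_i}$ that $\chi_t(G_{\mathfrak q_i})=\ell(G_{\mathfrak p_i})=\mu_i(\bar M_K)$. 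Here the support condition is what leaves only the primes $\mathfrak p_i$ and $\mathfrak q_i$, so the vertical factors cancel.
\end{itemize}

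Your displayed formula $\CC^L(M_b)=\CC(N\otimes_B\kappa(b))-\CC(\Tor_1^B(N,\kappa(b)))$ is false in general. Take $M=\delta_0$ over $R$ with $t$ acting by zero: the right side is $[T_0^*\A^1]-[T_0^*\A^1]=0$. Over a DVR it holds only for $R$-torsion-free modules, where it is precisely the identity you would need in the step above. It is also not the formula from the abstract, which involves $\Tor_1^R(M,k)$ of the module itself rather than of $\gr M$.

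Two further fixes are needed in the stratification part. Localizing at the prime $P_iW_B$ inverts $B\setminus\{0\}$ and so sees only the generic fiber; for fiberwise lengths you must invert only $W\setminus P_i$. On each stratum $T$, generic freeness must be applied to $\gr$ of the induced filtration on $M\otimes_B\OO_T$, not to $N\otimes_B\OO_T$.
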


More precisely, let $R$ be the local ring of a smooth complex algebraic curve at a closed point, with uniformizer $t$, residue field $k=\C$, and fraction field $K$. Under the corresponding support condition, we prove
\begin{equation}\label{eq:intro-tor}
\CC^L(M_R/tM_R)-\spc_\Lambda\CC^L(M_R\otimes_R K)
=\CC^L\bigl(\Tor_1^R(M_R,k)\bigr).
\end{equation}
Here $\spc_\Lambda$ is the homomorphism sending $[\Lambda_i\times\Spec K]$ to $[\Lambda_i]$ for every $i$. For the order filtration, equality of the two cycles is equivalent to $R$-flatness (Corollary~\ref{cor:flatness}). The proof compares the local parameter $t$ with the Rees parameter $h$: the failure of associated graded modules to commute with specialization cancels in a local Euler characteristic. Torsion with respect to $t$ gives the effective term in \eqref{eq:intro-tor}, and generic freeness together with a finite prime filtration gives constructibility. The relevant background includes filtered $D$-modules \cite{Laum83,Bjo93}, specialization and multifiltrations \cite{Sab87,CG07}, and relative filtrations \cite{Mai23}. We use the comparison between kernels and cokernels underlying the restriction formula for the order filtration in \cite[Proposition~2.7]{Wu22}, now for arbitrary rational projective weights.

We next consider Gevrey irregularity along $Y=\{x_n=0\}\subset X=\A^n$. For a holonomic $D$-module $N$ and $s>1$, let $\OO_{\widehat{X|Y}}(s)$ denote the sheaf of Gevrey series of order at most $s$, and set
\begin{equation}\label{eq:irregularity}
I_s(N)=\RH_{\DD_{X^{\mathrm{an}}}}\bigl(N^{\mathrm{an}},
\OO_{\widehat{X|Y}}(s)/\OO_{X|Y}\bigr),
\quad d_s(N)=\rank_Y\mathcal H^0 I_s(N).
\end{equation}
Thus $d_s(N)$ is the dimension at a general point of $Y$ of Gevrey solutions modulo convergent solutions. The exact Gevrey filtration, its positivity, and the comparison between algebraic and analytic slopes are established in \cite{Meb90,LM99}, using the microlocal theory of slopes \cite{Lau87}. For hypergeometric systems, explicit Gevrey and Nilsson solutions are studied in \cite{FF10,FC11,DMM12,FF20}, and integral representations and summation in \cite{CG15,CFG21,ET15,CFKT15}.

Let $\Sigma(A,Y)\subset\Q_{>1}$ be the finite set of umbrella walls for $L(r)=F+(r-1)V_Y$, where $V_Y$ is the filtration along $Y$. For $\rho>1$, choose non-wall levels $1<a<\rho<b$ such that $[a,b]$ contains no member of $\Sigma(A,Y)$ other than possibly $\rho$. Let $e_\rho(N)$ be the generic dimension of the solution sheaf with values in $\OO_{\widehat{X|Y}}(b)/\OO_{\widehat{X|Y}}(a)$. For the modules below it equals $d_b(N)-d_a(N)$ and is independent of sufficiently close choices of $a,b$.

\begin{theorem}\label{thm:gkz-app}
Under the assumptions on $A$ in Theorem~\ref{thm:gkz}, the functions
\[
\beta\longmapsto d_s(M_A(\beta)), \quad \beta\longmapsto e_\rho(M_A(\beta))
\]
are Zariski upper semicontinuous for every real $s>1$ and $\rho>1$. Moreover, the second function is zero if $\rho\notin\Sigma(A,Y)$, and the levels used to define $e_\rho$ may be chosen independently of $\beta$.
\end{theorem}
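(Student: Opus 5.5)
The plan is to reduce Theorem~\ref{thm:gkz-app} to Theorem~\ref{thm:gkz}, using a Gevrey index theorem. For each non-wall level $s>1$, this index theorem should express $d_s(N)$ as a fixed $\Z$-linear combination of characteristic cycle multiplicities of $N$ for the weights $L(s)=F+(s-1)V_Y$. Concretely, by the Laurent--Mebkhout comparison of slopes \cite{Lau87,LM99}, the generic rank of $\mathcal H^0 I_s(N)$ for holonomic $N$ is computed microlocally along $Y$. It equals a difference of multiplicities, namely the multiplicity along the conormal $T^*_YX$-type components in $\CC^{L(s)}(N)$ minus the corresponding multiplicity for $\CC^{F}(N)$. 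More generally, it is a sum of $\mu^{L(s),\tau}$ over the $\tau$ whose components lie over $Y$, weighted by intersection data that are independent of $\beta$. The first step is to state this formula for $N=M_A(\beta)$.

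The difficulty is that a difference of upper semicontinuous functions is not obviously upper semicontinuous. I therefore intend to run Theorem~\ref{thm:general} on the family $M=D_B/(DI_A+\sum D(E_i-\beta_i))$ over $B=\C[\beta]$ directly. I will use the Tor formula \eqref{eq:intro-tor} along curves through a special parameter. The key point is that $\Tor_1^R(M_R,k)$ is a single $D$-module, so it carries both a $\CC^{L(s)}$ and a $\CC^F$. Moreover, the Gevrey index of this Tor module is itself a nonnegative generic Gevrey dimension. The jump of $d_s$ at the special point therefore equals $d_s(\Tor_1)\ge 0$, because $d_s$ is additive on the long exact sequence obtained by applying the solution functor to $0\to M_R\xrightarrow{t}M_R\to M_R/tM_R\to0$. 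The needed vanishing of $\mathcal H^{-1}$ and of the positivity defect comes from Mebkhout's positivity of the irregularity complex \cite{Meb90}, which makes $I_s$ perverse, so ranks behave additively. Constructibility on strata comes from Theorem~\ref{thm:general}. Combining constructibility with the nonnegative jump along every curve gives Zariski upper semicontinuity of $d_s$ for non-wall $s$.

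For an arbitrary real $s$, the function $d_s$ is locally constant in $s$ between consecutive points of the finite set of walls. The key observation is that this set of walls $\Sigma(A,Y)$ depends only on the umbrellas, hence not on $\beta$. At a wall, right-continuity (Gevrey order $\le s$) gives $d_s=d_{s'}$ for nearby non-wall $s'$ on the appropriate side, and this reduces to the previous case. For $e_\rho$, I pick $a,b$ adjacent to $\rho$ using the $\beta$-independent set $\Sigma(A,Y)$. Then $e_\rho=d_b-d_a$ is the rank of solutions in $\OO(b)/\OO(a)$. Applying the same Tor argument to this quotient sheaf, whose solution complex is again perverse by \cite{LM99}, shows that the jump equals $e_\rho(\Tor_1)\ge0$. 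If $\rho\notin\Sigma$, no umbrella changes on $[a,b]$, so $\CC^{L(a)}$ and $\CC^{L(b)}$ agree and $e_\rho=0$.

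The main obstacle is the additivity step. I must justify that the torsion module $\Tor_1$, which may not be a GKZ module, still satisfies the index formula, and that the solution functors turn the $t$-sequence into a sequence whose generic ranks add up. I would first try to verify this by showing that all modules involved are holonomic with characteristic variety inside $\bigcup\Lambda_\tau$, so that the index theorem applies uniformly.
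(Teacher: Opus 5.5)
Your architecture is the paper's. You write $d_s$ and $e_\rho$ as fixed integer combinations of characteristic multiplicities at finitely many $\beta$-independent weights $L(r)$, apply \eqref{eq:intro-tor} at each weight along a parameter curve, check that $T=\Tor_1^R(M_R,k)$ obeys the same index formula, and conclude with a common stratification and curve selection as in Proposition~\ref{prop:positive-functional}. However, the justification you give for the jump identity cannot work, and neither can the sub-goal you call the main obstacle (that the solution functors turn the $t$-sequence into one whose generic ranks add up). The sequence $0\to M_R\xrightarrow{t}M_R\to M_R/tM_R\to0$ is exact only when $T=0$. More seriously, $M_R$ is not a holonomic $D$-module: for a constant family $N_0\otimes_\C R$ it is an infinite direct sum of copies of $N_0$. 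So neither $d_s(M_R)$ nor perverse additivity is available. Even if you grant additivity formally on the correct sequence $0\to T\to M_R\xrightarrow{t}M_R\to M_0\to0$, the two copies of $M_R$ cancel and you get $d_s(M_0)=d_s(T)$, in which the generic fiber never appears. For a constant family this would say $d_s(N_0)=0$ for every holonomic $N_0$. The generic fiber enters only through the specialization of characteristic cycles, and that is not induced by any exact sequence of $D$-modules.

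What actually gives the identity is linearity. By Lemma~\ref{lem:index}, $d_s=\phi_{r_+}-\phi_{r_0}$ with $\phi_r=m_Z^r-m_Y^r$. The zero-section coefficients are needed, not only those of $T^*_YX$: in Theorem~\ref{thm:counterexample} the generic zero-section coefficient changes across $K/3$ from $36K$ to $36+24K$, while $d_s$ jumps by $36$. The steps are as follows. Apply \eqref{eq:curve-tor} at $L(r_0)$ and $L(r_+)$. Identify $\spc_\Lambda\CC^L(M_K)$ with the cycle of a general complex fiber via Proposition~\ref{prop:dense-open}. Then take the same combination; the right-hand side is $d_s(T)$ once the index formula holds for $T$ at the same levels. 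That is Lemma~\ref{lem:tor-control}, and it needs more than $\Ch^{L(r)}(T)\subseteq\bigcup_\tau\Lambda_\tau$. The dimension bound \cite[Theorem~3.2]{SW08} forces $\Ch^{L(r)}(T)$ to be a union of whole umbrella components, hence $(F,V_Y)$-bihomogeneous for $r\notin\Sigma(A,Y)$. Then \cite[Theorem~2.4.2]{LM99} puts the analytic slopes of $T$ and of every fiber in $\Sigma(A,Y)$. Perversity of the Gevrey filtration enters only at the end, to see that $d_s(T)$ and $e_\rho(T)=d_b(T)-d_a(T)$ are generic dimensions and hence nonnegative. Likewise, $e_\rho=0$ for $\rho\notin\Sigma(A,Y)$ follows from the absence of slopes in $[a,b]$. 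You do not need, and at special $\beta$ you have not shown, that $\CC^{L(a)}$ and $\CC^{L(b)}$ coincide.
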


This proves the upper semicontinuity assertion in \cite[Conjecture~7.10]{BFF20} and removes the assumption in \cite[Lemma~7.8]{BFF20} that exactly one facet is not $F$-homogeneous. The index formula expresses $d_s$ as a combination of characteristic multiplicities with both signs, so upper semicontinuity of the individual coefficients is not sufficient. Applying \eqref{eq:intro-tor} at the required weights gives, for the same module $T=\Tor_1^R(M_R,k)$,
\begin{equation}\label{eq:intro-gevrey-tor}
d_s(M_0)-d_s(M_{\mathrm{gen}})=d_s(T),
\quad e_\rho(M_0)-e_\rho(M_{\mathrm{gen}})=e_\rho(T).
\end{equation}
Here $M_{\mathrm{gen}}$ is a general complex fiber of the parameter curve. The right-hand sides are nonnegative by the Gevrey filtration of $T$; Section~\ref{sec:gevrey} proves that the slopes of the modules involved lie in a common finite set.

The remaining part of \cite[Conjecture~7.10]{BFF20} proposes a description of Gevrey jumps in terms of Ext, motivated by the relation between Euler--Koszul homology and local cohomology \cite{MMW05,Ber11} and by graded local duality \cite{BH98,MS05}. Use the positive grading $\deg\partial_j=a_j$ and put $\mathfrak m=(\partial_1,\ldots,\partial_n)\subset P$. Let $S_A^{\{n\}}=\C[(\N A+\Z a_n)\cap\R_{\ge0}A]$ be the partial saturation and let $\mathcal Z(N)$ be the exceptional arrangement defined by lower local cohomology in \eqref{eq:exceptional}. We prove
\begin{equation}\label{eq:intro-ext-inclusion}
\{\beta:d_s(M_A(\beta))>d_s(A)\}\subseteq\mathcal Z(S_A^{\{n\}}),
\end{equation}
where $d_s(A)$ is the generic parameter value. The analogous inclusion holds for $e_\rho$. Hence Cohen--Macaulayness of $S_A^{\{n\}}$ implies constancy of these dimensions. The converse fails: for every $K\ge4$, Theorem~\ref{thm:counterexample} constructs a rank-three matrix $A_K$ with eight columns such that $S_{A_K}^{\{8\}}=S_{A_K}$ has depth two, its exceptional arrangement consists of $K$ affine lines, and
\[
d_s(A_K,\beta)=
\begin{cases}
0,&1<s<K/3,\\
36,&s\ge K/3
\end{cases}
\quad\text{for every }\beta\in\C^3.
\]
Its only slope is $K/3>1$. The homological contribution to its rank jumps consists of regular Kummer modules, so the Gevrey jump locus is empty even above that slope. Thus the full homological obstruction can be nonzero while its irregular contribution vanishes.

The paper is organized as follows. Section~\ref{sec:filtered} develops the filtered algebraic framework and introduces the relative support condition. In Section~\ref{sec:dvr}, we prove the specialization formula over a parameter curve and derive the flatness criterion. Section~\ref{sec:constructibility} establishes constructibility and upper semicontinuity of the multiplicities in the characteristic cycles. We then apply these results to $A$-hypergeometric systems in Section~\ref{sec:gkz}, where we also recall the Euler--Koszul constructions needed later. Section~\ref{sec:gevrey} proves the specialization formulas for Gevrey irregularity and its graded dimensions. Finally, Section~\ref{sec:ext} relates the corresponding jump loci to local cohomology and gives counterexamples to the proposed description in terms of Ext.


\section{Projective filtrations and families}\label{sec:filtered}

We work over $\C$ and use left modules. We use the standard conventions for holonomic modules and characteristic cycles in \cite{Bjo93,HTT08,KS90}, and the description of good filtrations in terms of Rees modules in \cite{Laum83}. The same algebraic constructions will also be used after extending scalars to the function field of a parameter curve. A finite module means a finitely generated module over the indicated ring. All filtrations are increasing and exhaustive. For a module $V$ and a central element $z$, the notation $0:_Vz$ means the kernel of multiplication by $z$, and $H^0_{(z)}V=\bigcup_{a\ge1}(0:_Vz^a)$ denotes the submodule of $z$-power torsion.

Let $B$ be a finitely generated commutative $\C$-algebra, and put $D_B=D\otimes_\C B$, with the elements of $B$ in filtration degree zero. We first recall the filtrations determined by projective weights.

\begin{definition}\label{def:projective-filtration}
A rational weight $L=(u,v)\in\Q^{2n}$ is \emph{projective} if $u_j+v_j=c>0$ for every $j$. The associated filtration on $D_B$ is
\begin{equation}\label{eq:filtration-D}
F_k^LD_B=\sum_{u\cdot\alpha+v\cdot\gamma\le k}B x^\alpha\partial^\gamma.
\end{equation}
\end{definition}

Fix such a weight $L$. Multiplying $L$ by a positive integer changes the indexing of the filtration but not the associated graded supports or multiplicities. We therefore take $u,v\in\Z^n$ and $u_j+v_j=c\in\Z_{>0}$. The inequality $c>0$ implies that $\gr^LD_B=W_B:=B[x,\xi]$, where $\xi_j$ denotes the principal symbol of $\partial_j$ for $1\le j\le n$. We usually omit the superscript $L$ from a chosen good filtration on a module, and write $\gr_FM$ when it is useful to distinguish that filtration from the weight itself.

We use Rees modules to describe good filtrations and their behavior under specialization.

\begin{definition}\label{def:rees-good-filtration}
The \emph{Rees algebra} of $D_B$ is
\[
\RR_B=\Rees_LD_B=\bigoplus_{k\in\Z}F_k^LD_B\,h^k\subseteq D_B[h,h^{-1}].
\]
For a finite $D_B$-module $M$ with a filtration $F_\bullet M$ compatible with $F_\bullet^LD_B$, its \emph{Rees module} is
\[
\mathscr M=\Rees_FM=\bigoplus_{k\in\Z}F_kM\,h^k\subseteq M[h,h^{-1}].
\]
The filtration is \emph{good} if $\mathscr M$ is finite over $\RR_B$.
\end{definition}

The Rees algebra has the presentation
\begin{equation}\label{eq:rees-pbw}
\RR_B=\frac{B\langle X_1,\ldots,X_n,Y_1,\ldots,Y_n,h\rangle}
{([Y_i,X_j]-\delta_{ij}h^c,\ [X_i,X_j],\ [Y_i,Y_j],\ [h,X_i],\ [h,Y_i])},
\end{equation}
where $X_i=x_ih^{u_i}$ and $Y_i=\partial_ih^{v_i}$. In particular,
\begin{equation}\label{eq:rees-fibers}
\RR_B/h\RR_B=W_B,
\quad \RR_B[h^{-1}]=D_B[h,h^{-1}].
\end{equation}
Indeed, every ordered monomial $x^\alpha\partial^\gamma h^k$ in $\RR_B$ can be written as
\[
x^\alpha\partial^\gamma h^k
=X^\alpha Y^\gamma h^{k-u\cdot\alpha-v\cdot\gamma},
\]
where the exponent on the right is nonnegative. This shows that $X_1,\ldots,X_n,Y_1,\ldots,Y_n,h$ generate $\RR_B$. The defining relations follow from the Weyl algebra relations, and the PBW basis of $D_B$ shows that the ordered monomials $X^\alpha Y^\gamma h^m$, with $m\ge0$, are linearly independent over $B$. This proves \eqref{eq:rees-pbw}, including when some $u_i$ or $v_i$ is negative. As an iterated Ore extension of $B[h]$, the algebra $\RR_B$ is Noetherian; cf.~\cite[Lemma~4.1]{SW08}.

A filtration is good precisely when it is generated by finitely many elements of $M$ placed in fixed degrees. For its Rees module we have
\[
\mathscr M/h\mathscr M=\gr_FM,
\quad \mathscr M[h^{-1}]=M[h,h^{-1}],
\]
and $h$ acts injectively on $\mathscr M$. The Noetherianity of $\RR_B$ implies that induced filtrations on submodules and quotient filtrations are good. For a short exact sequence equipped with these filtrations, both the Rees sequence and the associated graded sequence are exact. These are the properties of filtered modules used in \cite[Section~4.1]{SW08}; the general theory of strict filtered sequences and good filtrations is developed in \cite{Laum83,Bjo93}.

We next recall the characteristic variety and the cycle associated with a good filtration.

\begin{definition}\label{def:characteristic-cycle}
Let $N$ be a finite $D$-module with a good $L$-filtration $F$. Its \emph{$L$-characteristic variety} is $\Ch^L(N)=\Supp\gr_FN$. When this support is contained in a finite union of distinct irreducible subvarieties $\Lambda_i$ of dimension $n$, its \emph{$L$-characteristic cycle} is
\begin{equation}\label{eq:cc-length}
\CC^L(N)=\sum_i\ell_{W_{P_i}}(\gr_FN)_{P_i}[\Lambda_i],
\quad P_i=I(\Lambda_i).
\end{equation}
Here $\ell$ denotes module length, and a component that does not occur is assigned coefficient zero.
\end{definition}

Every nonempty irreducible component of $\Ch^L(N)$ has dimension at least $n$ \cite[Theorem~3.2]{SW08}. This dimension bound reflects the involutivity of characteristic varieties \cite{Gab81}. The cycle is independent of the good filtration and is additive in short exact sequences \cite[Definition~4.4 and Lemma~4.5]{SW08}. The same length convention applies over a field extension of $\C$. In particular, these statements apply to the fibers and the subquotients used below. Filtration shifts do not change these ungraded lengths.

The dependence of initial modules on the weight is organized by Gr\"obner fans; for Weyl-algebra modules, see \cite{ACG00}, and for the hypergeometric constructions see \cite{SST00}. We keep $L$ fixed when changing the parameter ring. We next record precisely what happens under that change.

\begin{lemma}\label{lem:basechange}
Let $B\to B'$ be a homomorphism of Noetherian commutative $\C$-algebras, and let $M$ be a finite $D_B$-module with a good filtration $F$. Equip $M'=M\otimes_BB'$ with the filtration
\[
F_kM'=\im(F_kM\otimes_BB'\longrightarrow M').
\]
Then this filtration is good, and there is a natural surjection
\begin{equation}\label{eq:basechange-surj}
(\gr_FM)\otimes_BB'\longrightarrow\gr_FM'.
\end{equation}
More precisely, if $N=\mathscr M\otimes_BB'$, then
\begin{equation}\label{eq:basechange-rees}
\Rees_FM'=N/H^0_{(h)}N.
\end{equation}
If $B'$ is flat over $B$, the map in \eqref{eq:basechange-surj} is an isomorphism.
\end{lemma}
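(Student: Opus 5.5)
The plan is to work entirely with Rees modules. Put $\RR_{B'}=\RR_B\otimes_BB'$; by the presentation \eqref{eq:rees-pbw} this is the Rees algebra of $D_{B'}$ for the same weight $L$. Write $\mathscr M=\Rees_FM\subseteq M[h,h^{-1}]$ and $N=\mathscr M\otimes_BB'$, which is a finite $\RR_{B'}$-module. Localizing at $h$ commutes with tensor products, so the first step is the identification
\[
N[h^{-1}]=\mathscr M[h^{-1}]\otimes_BB'=M[h,h^{-1}]\otimes_BB'=M'[h,h^{-1}].
\]
The natural map $N\to N[h^{-1}]=M'[h,h^{-1}]$ sends $F_kM\,h^k\otimes B'$ onto $F_kM'\,h^k$, by the definition of $F_kM'$ as an image. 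Hence the image of $N$ is exactly $\Rees_FM'$, and the kernel of $N\to N[h^{-1}]$ is the $h$-power torsion $H^0_{(h)}N$. This proves \eqref{eq:basechange-rees}.

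Goodness of the filtration on $M'$ then follows at once: $\Rees_FM'$ is a quotient of the finite $\RR_{B'}$-module $N$, so it is finite. Before this, one should check that $F_\bullet M'$ is exhaustive and compatible with $F^L_\bullet D_{B'}$. Both hold because the same properties hold on $M$ and pass to the images.

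For the surjection \eqref{eq:basechange-surj}, reduce modulo $h$. Right exactness of $\otimes_BB'$ gives
\[
N/hN=(\mathscr M/h\mathscr M)\otimes_BB'=(\gr_FM)\otimes_BB'.
\]
The surjection $N\to\Rees_FM'$ induces a surjection $N/hN\to\Rees_FM'/h\Rees_FM'=\gr_FM'$. Its kernel is the image of $H^0_{(h)}N$ in $N/hN$.

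If $B'$ is flat over $B$, then tensoring the injection $\mathscr M\hookrightarrow M[h,h^{-1}]$ with $B'$ keeps it injective. So $N\to N[h^{-1}]$ is injective, $H^0_{(h)}N=0$, and the map \eqref{eq:basechange-surj} is an isomorphism.

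The only delicate point is the identification of the image of $N$ with $\Rees_FM'$ degree by degree. For this I would use that $\mathscr M$ and $N$ are $\Z$-graded by powers of $h$ and that $\otimes_BB'$ preserves the grading, so $N_k=F_kM\otimes_BB'$. The map $N_k\to M'h^k$ is then by definition the map whose image is $F_kM'$.
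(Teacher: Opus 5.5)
Your proposal is correct and follows essentially the same route as the paper. Both arguments identify $N[h^{-1}]$ with $M'[h,h^{-1}]$, read off the image of $N$ degreewise as $\Rees_FM'$ with kernel $H^0_{(h)}N$, and get goodness from finiteness of $N$. Both then obtain the surjection by reducing modulo $h$ with right exactness, and in the flat case use injectivity of $\mathscr M\to M[h,h^{-1}]$ after tensoring with $B'$ to conclude that $H^0_{(h)}N=0$.
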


\begin{proof}
After inverting $h$, the tensor product $N$ becomes $M'[h,h^{-1}]$. The image of $N$ in this localization has degree-$k$ part equal to the image of $F_kM\otimes_BB'$ in $M'$, multiplied by $h^k$. It is therefore the Rees module of the stated filtration. The kernel of the localization map is $H^0_{(h)}N$, which proves \eqref{eq:basechange-rees}. Since $N$ is finite over $\RR_{B'}$, its image is finite, and the filtration is good. Passing to the quotient by $h$ gives \eqref{eq:basechange-surj} by right exactness. If $B'$ is flat, the injectivity of multiplication by $h$ on $\mathscr M$ is preserved, and $H^0_{(h)}N=0$.
\end{proof}

The surjection in \eqref{eq:basechange-surj} preserves the following support condition. Fix distinct irreducible subvarieties $\Lambda_1,\ldots,\Lambda_q$ of dimension $n$ in $\Spec W$, put $\Lambda=\bigcup_i\Lambda_i$, and write $P_i=I(\Lambda_i)$.

\begin{definition}\label{def:relative-support}
A finite $D_B$-module $M$ with a good filtration $F$ satisfies the \emph{product support condition} for $\Lambda$ if
\begin{equation}\label{eq:product-support}
\Supp\gr_FM\subseteq\Lambda\times\Spec B.
\end{equation}
For such a module and a closed point $b\in\Spec B$, its fiber multiplicities are
\begin{equation}\label{eq:mu-family}
\mu_i(M_b)=\ell_{W_{P_i}}(\gr_FM_b)_{P_i}.
\end{equation}
Here $M_b=M\otimes_B\kappa(b)$ has the induced filtration of Lemma~\ref{lem:basechange}. For an integral parameter ring with fraction field $K$, the generic multiplicities $\mu_i(M_K)$ are defined using the corresponding primes $P_iW_K$. The specialization homomorphism $\spc_\Lambda$ is defined by $\spc_\Lambda[\Lambda_i\times\Spec K]=[\Lambda_i]$.
\end{definition}

Throughout Sections~\ref{sec:dvr} and~\ref{sec:constructibility} we assume \eqref{eq:product-support}. Equivalently, every prime in the relative associated graded support contains the extension of at least one of the ideals $P_i$. The condition is preserved by base change with the filtration of Lemma~\ref{lem:basechange}. It is also preserved by submodules and quotients with their induced good filtrations. The ideals $P_iW_K$ are prime because an irreducible variety over $\C$ is geometrically integral. Thus the generic and special characteristic cycles are compared in the free group generated by the symbols $[\Lambda_i]$.

\begin{remark}\label{rem:filtration-choice}
The support condition \eqref{eq:product-support} is imposed on a good filtration of the relative module $M$. By Lemma~\ref{lem:basechange}, the induced filtrations on the fibers are good and satisfy the same support condition. The multiplicities in \eqref{eq:mu-family} are independent of the choice of good filtration on each fiber.
\end{remark}

\begin{example}\label{ex:bad-support}
Let $R=\C[t]_{(t)}$, take one differential variable $x$, and use the order filtration. Consider
\[
M=D_R/D_R(t\partial_x-1).
\]
Over $K$, this is a rank-one connection, whereas $M/tM=0$. This family occurs in \cite[Remark~2.7]{MMW05}. Its relative associated graded module has support $V(t\xi)$: since the associated graded ring is a domain, the initial ideal of the principal left ideal is generated by $t\xi$. Over $t=0$ this support contains the entire cotangent plane. It is therefore not contained in a product with a fixed finite union of curves. This explains why fiberwise holonomicity alone does not give the hypothesis of Theorem~\ref{thm:general}.
\end{example}


\section{Specialization over a discrete valuation ring}\label{sec:dvr}

Let $R$ be the local ring of a smooth irreducible complex algebraic curve at a closed point. Write $t$ for a uniformizer, $k=R/(t)=\C$, and $K=\Frac R$. In this section the Rees parameter is always $h$. We prove the following form of \eqref{eq:intro-tor}.

\begin{theorem}\label{thm:curve}
Let $M$ be a finite $D_R$-module with a good $L$-filtration satisfying the product support condition \eqref{eq:product-support}. If $T=H^0_{(t)}M$, then
\begin{equation}\label{eq:curve-tor}
\CC^L(M/tM)-\spc_\Lambda\CC^L(M_K)=\CC^L(T/tT)=\CC^L\bigl(\Tor_1^R(M,k)\bigr).
\end{equation}
Consequently, $\mu_i(M/tM)\ge\mu_i(M_K)$ for every $i$.
\end{theorem}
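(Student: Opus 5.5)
The plan is to compare the two central elements $t$ and $h$ acting on the Rees module $\mathscr M=\Rees_FM$ through a two-variable Euler characteristic, computed once with $h$ first and once with $t$ first. Fix $i$ and consider the prime $\mathfrak q_i=P_iW_R+tW_R$ of $W_R=R[x,\xi]$. By the product support condition, after localizing at $\mathfrak q_i$ every $W_R$-module in sight that is killed by $h$ has support of dimension at most one. Only the generic point $P_iW_R$ of $\Lambda_i\times\Spec R$ and the closed point $\mathfrak q_i$ can occur. Modules killed by both $h$ and $t$ have finite length there.

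\emph{Step 1: $h$ first.} Since $h$ is injective on $\mathscr M$, the Koszul homology of $(h,t)$ reduces to that of $t$ on $G=\gr_FM=\mathscr M/h\mathscr M$. I will show that
\[
\ell\bigl((G/tG)_{\mathfrak q_i}\bigr)-\ell\bigl((0:_Gt)_{\mathfrak q_i}\bigr)=\ell\bigl(G_{P_iW_R}\bigr)=\mu_i(M_K).
\]
This is the standard formula: the multiplicity symbol $e(t;G_{\mathfrak q_i})$ of a module of dimension $\le1$ equals $\sum\ell(G_{\mathfrak p})\,e(t;W_R/\mathfrak p)$. Here $e(t;(W_R/P_iW_R)_{\mathfrak q_i})=1$, because $W_R/P_iW_R=(W/P_i)\otimes_\C R$ and $t$ is a uniformizer on this geometrically integral product. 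The equality $\ell(G_{P_iW_R})=\mu_i(M_K)$ uses the flat case of Lemma~\ref{lem:basechange}.

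\emph{Step 2: $t$ first.} Because $h$ and $t$ commute, the snake lemma applied to $t$ on the exact sequence $0\to\mathscr M\xrightarrow{h}\mathscr M\to G\to0$ gives
\[
0\to0:_{\mathscr M}t\xrightarrow{h}0:_{\mathscr M}t\to0:_Gt\to\mathscr M/t\mathscr M\xrightarrow{h}\mathscr M/t\mathscr M\to G/tG\to0 .
\]
This yields a six-term exact sequence of $W_R$-modules, all killed by $t$ and by $h$, whose outer terms are the kernel and cokernel of $h$ on $N_1=0:_{\mathscr M}t$ and on $N_0=\mathscr M/t\mathscr M$. Localizing at $\mathfrak q_i$ and taking alternating lengths gives
\[
\mu_i(M_K)=\chi_h(N_0)-\chi_h(N_1),\qquad\chi_h(N)=\ell\bigl((N/hN)_{\mathfrak q_i}\bigr)-\ell\bigl((0:_Nh)_{\mathfrak q_i}\bigr).
\]

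I then identify the two terms.
\begin{itemize}
\item $N_1$ is $h$-torsion free and equals the Rees module of $0:_Mt=\Tor_1^R(M,k)$ with its induced good filtration. Hence $\chi_h(N_1)$ is its multiplicity along $\Lambda_i$.
\item By \eqref{eq:basechange-rees}, $N_0/H^0_{(h)}N_0=\Rees_F(M/tM)$. The torsion $H^0_{(h)}N_0$ is killed by a power of $h$, so it has a finite filtration by modules killed by $h$ and $t$. These have finite length at $\mathfrak q_i$, and $\chi_h$ of such a module is zero (kernel and cokernel of an endomorphism of a finite-length module). Therefore $\chi_h(N_0)=\mu_i(M/tM)$.
\end{itemize}
This proves $\CC^L(M/tM)-\spc_\Lambda\CC^L(M_K)=\CC^L(\Tor_1^R(M,k))$.

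\emph{Step 3: the middle term.} The $t$-torsion $T$ is a finite $D_R$-module killed by some $t^a$. Apply additivity of $\CC^L$ to $0\to0:_Tt\to T\xrightarrow{t}T\to T/tT\to0$, with induced good filtrations. The support condition is inherited, and every module here is killed by a power of $t$, so the cycles are finite sums of lengths over $W_{P_i}$. This gives $\CC^L(T/tT)=\CC^L(0:_Tt)$, and $0:_Tt=0:_Mt$. Nonnegativity of the coefficients then gives $\mu_i(M/tM)\ge\mu_i(M_K)$.

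The main obstacle is the bookkeeping in Steps 1–2. Localization must happen only on commutative modules killed by $h$, since the Rees algebra is noncommutative. The $h$-torsion created by the base change $R\to k$ must contribute zero, and this needs the finite-length claim at $\mathfrak q_i$, which is exactly where the product support condition is used (compare Example~\ref{ex:bad-support}). I would verify the dimension count first: a prime containing $t$ and $P_i$ that lies on $\Lambda\times\Spec R$ but strictly contains $\mathfrak q_i$ is excluded by localization, and the distinctness of the $\Lambda_j$ keeps the other components away.
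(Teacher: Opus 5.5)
Your proof is correct, but it is organized differently from the paper's.

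The paper first splits off $T=H^0_{(t)}M$. On the Rees module of the torsion-free quotient $Q=M/T$ both $t$ and $h$ are injective, so Lemma~\ref{lem:two-parameters} gives the isomorphism \eqref{eq:two-kernels}. Together with Lemma~\ref{lem:local-euler} this yields $\CC^L(Q/tQ)=\spc_\Lambda\CC^L(Q_K)$. Then $R$-flatness of $Q$ gives $0\to T/tT\to M/tM\to Q/tQ\to0$, so the defect is $\CC^L(T/tT)$, and Lemma~\ref{lem:nilpotent} turns this into $\CC^L(0:_Tt)$.

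You instead run the $(t,h)$ comparison on $\mathscr M=\Rees_FM$ itself. The snake lemma replaces \eqref{eq:two-kernels} by the exact sequence $0\to N_1/hN_1\to0:_Gt\to0:_{N_0}h\to0$ with $N_1=\Rees_F(0:_Mt)$; when $N_1=0$ this is the paper's isomorphism. So the Tor term appears directly as $\gr_F(0:_Mt)$, and you need neither the splitting nor flatness of torsion-free modules. The paper's route, in exchange, isolates the flat case as Proposition~\ref{prop:torsionfree-constant} and reaches the middle term $\CC^L(T/tT)$ first. Your identity also gives the flat case, since then $N_1=0$.

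Two points need more than you wrote.

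First, finiteness at $\mathfrak q_i$ of the factors $h^jU/h^{j+1}U$ and of $0:_{h^jU}h$ is not covered by ``every module in sight''; it needs the following argument.
\begin{itemize}
\item $U/hU$ embeds in $N_0/hN_0=G/tG$, because $V$ is $h$-torsion-free.
\item Each $h^jU/h^{j+1}U$ is a shifted quotient of $U/hU$.
\item $0:_{h^jU}h\subseteq0:_{N_0}h$, which is a quotient of $0:_Gt$.
\end{itemize}

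Second, in Step~3 the module $T$ is killed only by $t^a$. So it is not a $D$-module, and $\CC^L(T)$ is not among the cycles defined in Section~\ref{sec:filtered}. Moreover, when you split $0\to0:_Tt\to T\xrightarrow{t}T\to T/tT\to0$ at the image $tT$, that image carries the quotient filtration in one half and the induced filtration in the other. Additivity would therefore require filtration-independence of $\ell_{(W_R)_{\mathfrak q_i}}(\gr_FT)_{\mathfrak q_i}$, which you neither state nor prove.

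The simplest repair is to apply your Steps~1--2 to $T$ itself. With the induced filtration, $T$ satisfies \eqref{eq:product-support}, $T_K=0$, and $\Tor_1^R(T,k)=0:_Tt$, so your identity gives $\CC^L(T/tT)=\CC^L(0:_Tt)$ at once. Alternatively, filter by $t^jT$ as in Lemma~\ref{lem:nilpotent}, so that every module involved is killed by $t$.
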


The proof first treats the quotient $M/T$, which is torsion-free over $R$. 
Comparisons of the classes of filtered kernels and cokernels appear in \cite[Lemma~3.5.13]{Laum83} and \cite[Appendix~IV, Proposition~3.13]{Bjo93}, and are also used in \cite[Proposition~2.7]{Wu22}. We write the two-parameter calculation explicitly because $t$ and $h$ need not form a regular sequence on the Rees module.


\subsection{Two central parameters}

The next lemma gives an identity in the Grothendieck group of finitely generated modules. We will apply it to the Rees parameter $h$ and the curve parameter $t$.

\begin{lemma}\label{lem:nilpotent}
Let $\mathscr A$ be a left Noetherian ring, let $z\in\mathscr A$ be central, and let $V$ be a finite $\mathscr A$-module such that $z^aV=0$ for some $a\ge1$. In the Grothendieck group of finite $\mathscr A/(z)$-modules,
\begin{equation}\label{eq:nilpotent-class}
[V/zV]=[0:_Vz].
\end{equation}
The same identity holds for any additive invariant defined on the modules appearing in the proof.
\end{lemma}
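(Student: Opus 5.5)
We argue by induction on the nilpotency exponent $a$. The basic input is the four-term exact sequence of finite $\mathscr A$-modules
\[
0\longrightarrow 0:_Vz\longrightarrow V\xrightarrow{\;z\;}V\longrightarrow V/zV\longrightarrow 0.
\]
Multiplication by $z$ is $\mathscr A$-linear because $z$ is central. All modules here are finite because $\mathscr A$ is left Noetherian. The outer two terms are killed by $z$, but the middle ones in general are not, so we cannot read off the identity in the Grothendieck group of $\mathscr A/(z)$-modules directly. The plan is to replace $V$ by classes of $\mathscr A/(z)$-modules using the $z$-adic filtration.

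For $a=1$ we have $z$ acting as zero, so $0:_Vz=V=V/zV$, and the identity is trivial. For general $a$, we use the short exact sequences
\[
0\to zV\to V\to V/zV\to0,\qquad 0\to 0:_Vz\to V\xrightarrow{z} zV\to0.
\]
Both $zV$ and $V$ carry the filtrations $z^jV$. The layers $z^jV/z^{j+1}V$ and $z^jV\cap(0:_Vz)$-type pieces are all $\mathscr A/(z)$-modules, so we compare classes there.

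A cleaner route is to define, for any finite module $U$ killed by a power of $z$,
\[
\chi(U)=[U/zU]-[0:_Uz]\in K_0(\mathscr A/(z)).
\]
We then show that $\chi$ is additive on short exact sequences $0\to U'\to U\to U''\to0$ of such modules. The snake lemma applied to multiplication by $z$ on the sequence gives a six-term exact sequence
\[
0\to 0:_{U'}z\to0:_Uz\to0:_{U''}z\to U'/zU'\to U/zU\to U''/zU''\to0
\]
of $\mathscr A/(z)$-modules. Its alternating sum of classes vanishes, which is exactly $\chi(U)=\chi(U')+\chi(U'')$. Once additivity holds, we filter $V$ by $z^jV$, $0\le j\le a$. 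Each layer is killed by $z$ and hence has $\chi=0$ by the case $a=1$, so $\chi(V)=0$, which is the claim. Only the snake lemma and additivity of the invariant on the listed modules were used, so the final sentence of the lemma follows for any additive invariant defined on these subquotients, such as localized length at the primes $P_i$.

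The only point needing care is that every module in the snake sequence is a finite $\mathscr A/(z)$-module. This holds because each is a submodule or quotient of a finite module annihilated by $z$, and $\mathscr A$ is left Noetherian. There is no real obstacle beyond this bookkeeping.
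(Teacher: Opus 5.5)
Your proof is correct and is essentially the paper's argument: the snake lemma for multiplication by $z$ on $0\to zV\to V\to V/zV\to0$ collapses, because the induced map $zV/z^2V\to V/zV$ is zero, to exactly the four-term sequence $0\to0:_{zV}z\to0:_Vz\to V/zV\xrightarrow{z}zV/z^2V\to0$ that the paper checks by hand, which gives $\chi(V)=\chi(zV)$. Your sum over the $z$-adic layers is then the paper's telescoping over $V,zV,\ldots,z^{a-1}V$. Packaging this as additivity of $\chi$ only reorganizes the argument, and it involves the same subquotients of $V$, so your remark about arbitrary additive invariants holds as well.
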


\begin{proof}
There is an exact sequence of modules annihilated by $z$,
\begin{equation}\label{eq:nilpotent-exact}
0\longrightarrow0:_{zV}z\longrightarrow0:_Vz\longrightarrow V/zV
\xrightarrow{\ z\ }zV/z^2V\longrightarrow0.
\end{equation}
To verify exactness, first observe that the kernel of the natural map $0:_Vz\to V/zV$ is $(0:_Vz)\cap zV=0:_{zV}z$. If $v+zV$ belongs to the kernel of multiplication by $z$ on $V/zV$, then $zv=z^2w$ for some $w\in V$. Hence $v-zw\in0:_Vz$, and its image in $V/zV$ is $v+zV$. Finally, the map $V/zV\to zV/z^2V$ is surjective by definition. This proves the exactness of \eqref{eq:nilpotent-exact}.

Additivity in \eqref{eq:nilpotent-exact} gives
\[
[0:_Vz]-[V/zV]=[0:_{zV}z]-[zV/z^2V].
\]
Repeat this equality for $zV,z^2V,\ldots,z^{a-1}V$. The last difference is zero, proving \eqref{eq:nilpotent-class}.
\end{proof}

If $z$ is homogeneous of nonzero degree, the multiplication map in \eqref{eq:nilpotent-exact} becomes homogeneous of degree zero after shifting its target. We apply Lemma~\ref{lem:nilpotent} after forgetting the grading, since grading shifts do not affect the lengths defining the characteristic cycles in \eqref{eq:cc-length}.

\begin{lemma}\label{lem:two-parameters}
Let $t,h$ be central elements of a left Noetherian ring $\mathscr A$, and let $\mathscr Q$ be a finite $\mathscr A$-module on which multiplication by $t$ and multiplication by $h$ are injective. Set
\[
G=\mathscr Q/h\mathscr Q,
\quad N=\mathscr Q/t\mathscr Q,
\quad U=H^0_{(h)}N,
\quad V=N/U.
\]
Then $U$ is annihilated by a power of $h$, $V$ is $h$-torsion-free, and
\begin{equation}\label{eq:two-exact}
0\longrightarrow U/hU\longrightarrow G/tG\longrightarrow V/hV\longrightarrow0
\end{equation}
is exact. There is a natural isomorphism, up to a grading shift,
\begin{equation}\label{eq:two-kernels}
0:_Nh\simeq0:_Gt.
\end{equation}
In the Grothendieck group of finite $\mathscr A/(t,h)$-modules,
\begin{equation}\label{eq:two-class}
[V/hV]=[G/tG]-[0:_Gt].
\end{equation}
\end{lemma}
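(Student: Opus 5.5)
We first show that $U$ is annihilated by a power of $h$ and that $V$ is $h$-torsion-free. Since $\mathscr A$ is left Noetherian and $N$ is finite, the ascending chain $0:_Nh\subseteq0:_Nh^2\subseteq\cdots$ stabilizes. Hence $U=0:_Nh^a$ for some $a$. The quotient $V=N/U$ is $h$-torsion-free by the standard saturation argument: if $hv\in U$, then $h^{a+1}v=0$, so $v\in U$.

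Next we identify $G/tG$ and $V/hV$. We have
\[
G/tG=\mathscr Q/(t,h)\mathscr Q=N/hN.
\]
Applying $-\otimes\mathscr A/(h)$ to $0\to U\to N\to V\to0$ gives the exact sequence
\[
\Tor_1(V,\mathscr A/h)\to U/hU\to N/hN\to V/hV\to0,
\]
where $\Tor_1(V,\mathscr A/h)=0:_Vh$. Concretely, an element of $U$ that lies in $hN$, say $u=hn$, gives $hn\in U$, so $n\in U$ because $V$ is $h$-torsion-free. Thus $U\cap hN=hU$, which is exactly the injectivity of $U/hU\to N/hN$. This proves \eqref{eq:two-exact}.

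For \eqref{eq:two-kernels}, we use the snake lemma in a symmetric way. Multiplication by $h$ on the exact sequence $0\to\mathscr Q\xrightarrow{t}\mathscr Q\to N\to0$ is injective on $\mathscr Q$, so the snake lemma gives
\[
0:_Nh\simeq\ker\bigl(t\colon G\to G\bigr)=0:_Gt.
\]
Explicitly, if $n=\bar q$ with $hq=tq'$, we send $n$ to the class of $q'$ in $G$. Then $tq'\in h\mathscr Q$, so this class is killed by $t$. One checks that the map is well defined and bijective using the injectivity of $t$ and $h$ on $\mathscr Q$. The map shifts the grading by the degrees of $t$ and $h$.

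Finally we prove the class identity \eqref{eq:two-class}. By \eqref{eq:two-exact},
\[
[V/hV]=[G/tG]-[U/hU].
\]
Lemma~\ref{lem:nilpotent} applied to $U$ with $z=h$ gives $[U/hU]=[0:_Uh]$. Since $0:_Nh\subseteq U$, we have $0:_Uh=0:_Nh$, and this is isomorphic to $0:_Gt$ by \eqref{eq:two-kernels}. One point needs care: Lemma~\ref{lem:nilpotent} places classes in $K_0(\mathscr A/(h))$, whereas we need them in $K_0(\mathscr A/(t,h))$. All modules appearing in the proof, namely the subquotients $z^iU/z^{i+1}U$ and the kernels $0:_{z^iU}z$, are killed by $t$ because $U\subseteq N$, so the identity holds in the smaller group. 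This is the point where the phrase "any additive invariant on the modules appearing in the proof" in Lemma~\ref{lem:nilpotent} is used. The main obstacle is the careful verification of the snake-lemma isomorphism, including naturality and grading, rather than any conceptual step.
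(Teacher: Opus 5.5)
Your proof is correct and follows the paper's argument step for step. The shared steps are: stabilization of $0:_Nh^a$; the saturation argument giving $U\cap hN=hU$ and hence the exact sequence; the map $\bar a\mapsto\bar b$ with $ha=tb$, which the paper verifies by hand and you obtain as the snake-lemma connecting map; and Lemma~\ref{lem:nilpotent} applied to $U$ with $z=h$, using $0:_Uh=0:_Nh$. Two small remarks: identifying $\Tor_1(V,\mathscr A/h)$ with $0:_Vh$ requires $h$ to be a non-zero-divisor on $\mathscr A$, which the lemma does not assume, but your direct proof of $U\cap hN=hU$ makes that remark unnecessary; and your observation that every module is killed by $t$, so the identity holds in $K_0(\mathscr A/(t,h))$, makes explicit a point the paper leaves implicit.
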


\begin{proof}

Since $N$ is Noetherian, the ascending chain
\[
0:_Nh\subseteq0:_Nh^2\subseteq\cdots
\]
stabilizes. Thus $h^aU=0$ for some $a\ge1$. If $hn\in U$, then $h^{a+1}n=0$, so $n\in U$. It follows that $U\cap hN=hU$ and that $V=N/U$ has no $h$-torsion. Consequently, reducing the exact sequence $0\to U\to N\to V\to0$ modulo $h$ gives
\[
0\longrightarrow U/hU\longrightarrow N/hN
\longrightarrow V/hV\longrightarrow0.
\]
Since $N/hN\simeq G/tG$, this proves \eqref{eq:two-exact}.

We next prove \eqref{eq:two-kernels}. Let $\bar a\in0:_Nh$ be represented by $a\in\mathscr Q$. Since $h\bar a=0$, there exists $b\in\mathscr Q$ such that $ha=tb$. Moreover, $b$ is unique because multiplication by $t$ is injective on $\mathscr Q$. Define a map $0:_Nh\to0:_Gt$ by sending $\bar a$ to the class $\bar b$ of $b$ in $G$. If $a$ is replaced by $a+td$, then $b$ is replaced by $b+hd$, so the map is well defined. If $\bar b=0$, then $b=hd$ for some $d\in\mathscr Q$. Hence $h(a-td)=0$, and the injectivity of $h$ gives $a=td$. This proves injectivity. Conversely, given $\bar b\in0:_Gt$, we have $tb=ha$ for some $a\in\mathscr Q$. The class of $a$ in $N$ belongs to $0:_Nh$ and maps to $\bar b$, proving surjectivity. Since $t$ and $h$ are central, the map is linear over $\mathscr A/(t,h)$.

Since $V=N/U$ has no $h$-torsion, we have $0:_Nh=0:_Uh$. As $h^aU=0$ for some $a\ge1$, Lemma~\ref{lem:nilpotent} gives $[U/hU]=[0:_Uh]$. Taking equivalence classes in \eqref{eq:two-exact} and using \eqref{eq:two-kernels}, we obtain \eqref{eq:two-class}. 
\end{proof}


\subsection{The torsion-free quotient}

Let $Q$ be a finite $D_R$-module with no $R$-torsion, and choose a good filtration satisfying the product
support condition \eqref{eq:product-support}. Since
\[
\mathscr Q=\Rees_FQ\subseteq Q[h,h^{-1}],
\]
both $t$ and $h$ act injectively on $\mathscr Q$. Apply Lemma~\ref{lem:two-parameters} to $\mathscr A=\RR_R$. By Lemma~\ref{lem:basechange},
\[
V=N/H^0_{(h)}N=\Rees_F(Q/tQ)
\]
for the quotient filtration. We obtain
\begin{equation}\label{eq:defect-sequence}
0\longrightarrow U/hU\longrightarrow G/tG
\longrightarrow\gr_F(Q/tQ)\longrightarrow0,
\end{equation}
and
\begin{equation}\label{eq:defect-classes}
[\gr_F(Q/tQ)]=[G/tG]-[0:_Gt].
\end{equation}
All three terms in \eqref{eq:defect-classes} are finite $W$-modules supported on $\Lambda$. For the proof of the class identity, the modules obtained by replacing $U$ with $h^jU$ also have this support: each $h^jU/h^{j+1}U$ is a quotient, up to a shift, of $U/hU$, and $0:_{h^jU}h$ is a submodule of $0:_Uh$. Thus localization of the identity at any $P_i$ involves only finite lengths.

Put
\[
\mathfrak p_i=P_iW_R,
\quad \mathfrak q_i=P_iW_R+(t).
\]
Since $(W_R)_{\mathfrak q_i}/(t)=W_{P_i}$, equation~\eqref{eq:defect-classes} gives
\begin{equation}\label{eq:defect-length}
\mu_i(Q/tQ)=
\ell_{(W_R)_{\mathfrak q_i}}(G/tG)_{\mathfrak q_i}
-\ell_{(W_R)_{\mathfrak q_i}}(0:_Gt)_{\mathfrak q_i}.
\end{equation}
We next compare the difference in \eqref{eq:defect-length} with the generic multiplicity. We use the Koszul Euler characteristic of multiplication by $t$ from multiplicity theory \cite[Chapter~V]{Ser00}.

\begin{definition}[Local Euler characteristic]\label{def:local-euler-characteristic}
Let $V$ be a finite module over a Noetherian local ring, and suppose that the kernel and cokernel of multiplication by $t$ on $V$ have finite length. The \emph{Euler characteristic of multiplication by $t$} is
\[
\chi_t(V)=\ell(V/tV)-\ell(0:_Vt).
\]
\end{definition}

The support hypothesis in the next lemma ensures that this Euler characteristic is defined on $G_{\mathfrak q_i}$ and identifies the horizontal and vertical factors that contribute to it.

\begin{lemma}\label{lem:local-euler}
Let $G$ be a finite $W_R$-module satisfying
\[
\Supp G\subseteq\bigcup_i V(P_iW_R).
\]
Then the kernel and cokernel of multiplication by $t$ on $G_{\mathfrak q_i}$ have finite length, and
\begin{equation}\label{eq:local-euler}
\ell(G_{\mathfrak q_i}/tG_{\mathfrak q_i})
-\ell(0:_{G_{\mathfrak q_i}}t)
=\ell_{(W_R)_{\mathfrak p_i}}G_{\mathfrak p_i}. 
\end{equation}
\end{lemma}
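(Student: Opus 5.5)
Write $\mathcal A=(W_R)_{\mathfrak q_i}$, which is a Noetherian local ring, and put $\mathcal G=G_{\mathfrak q_i}$. The plan is to reduce to standard multiplicity theory for the element $t$ on a module of dimension one. The Euler characteristic $\chi_t$ is additive on short exact sequences of modules where it is defined, by the snake lemma. This lets us compute it on a prime filtration.

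\textbf{Step 1: $\mathcal G$ has dimension at most one.} We first show $\dim\mathcal G\le1$ and that $t$ is a parameter for it. The primes of $W_R$ contained in $\mathfrak q_i$ and lying in $\Supp G$ contain some $P_jW_R$.

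1. Both $P_jW_R\subseteq\mathfrak q_i$ and $P_jW_R$ are extended from $W=\C[x,\xi]$.
2. Contracting to $W$ gives $P_j\subseteq\mathfrak q_i\cap W=P_i$. This uses that $W_R/P_iW_R=(W/P_i)\otimes_\C R$ is a domain and $t$ is regular on it, so the contraction is $P_i$.
3. Both $P_j$ and $P_i$ have dimension $n$, so $P_j=P_i$.
4. Hence every prime of $\Supp\mathcal G$ lies between $\mathfrak p_i$ and $\mathfrak q_i$.
5. Now $\mathfrak q_i/\mathfrak p_i$ is generated by the regular element $t$ in the localization of $(W/P_i)\otimes R$. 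So the only primes are $\mathfrak p_i$ and $\mathfrak q_i$.

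Consequently $\mathcal G/t\mathcal G$ is supported at the maximal ideal and has finite length. The kernel $0:_{\mathcal G}t$ is also killed by $t$, so it is likewise supported only at $\mathfrak q_i$ and has finite length. This proves the first assertion.

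\textbf{Step 2: $\chi_t$ on the factors of a prime filtration.} Take a prime filtration of $\mathcal G$ with factors $\mathcal A/\mathfrak p$, where $\mathfrak p\in\{\mathfrak p_i,\mathfrak q_i\}$. By additivity it suffices to compute $\chi_t$ on each factor.

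For $\mathcal A/\mathfrak q_i$, the element $t$ acts by zero, so $\chi_t=1-1=0$.

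For $\mathcal A/\mathfrak p_i$, the element $t$ is injective, so $\chi_t=\ell(\mathcal A/(\mathfrak p_i+t))$. This length is $1$, because $\mathfrak p_i+(t)=\mathfrak q_i$ is the maximal ideal.

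\textbf{Step 3: Counting the factors.} The number of factors isomorphic to $\mathcal A/\mathfrak p_i$ equals $\ell_{(W_R)_{\mathfrak p_i}}G_{\mathfrak p_i}$. This follows by localizing the filtration at $\mathfrak p_i$, where the $\mathfrak q_i$-factors vanish. Summing the contributions from Step 2 gives \eqref{eq:local-euler}.

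The main obstacle is Step 1, specifically the claim that $\mathfrak q_i/\mathfrak p_i$ has height one with no primes strictly between $\mathfrak p_i$ and $\mathfrak q_i$. I would establish it by noting that $\mathcal A/\mathfrak p_i$ is a localization of $(W/P_i)\otimes_\C R$ at the prime generated by $t$. That ring is a one-dimensional local domain whose maximal ideal is principal, hence a DVR, whose only primes are $0$ and the maximal ideal.
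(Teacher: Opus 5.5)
Your proof is correct and follows essentially the same route as the paper. Both arguments cut the support of $G_{\mathfrak q_i}$ down to $\{\mathfrak p_i,\mathfrak q_i\}$ using the DVR $((W/P_i)\otimes_\C R)_{(t)}$, compute $\chi_t$ factor by factor on a prime filtration, and count the $\mathfrak p_i$-factors by localizing at $\mathfrak p_i$.

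The only small differences are these. You obtain finite length of the kernel and cokernel directly from their support rather than from the filtration. The equality $\mathfrak q_i\cap W=P_i$ is more cleanly justified by $W_R/\mathfrak q_i\cong W/P_i$. The domain property of $(W/P_i)\otimes_\C R$, needed for the DVR step, comes from geometric integrality.
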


\begin{proof}
If $P_jW_R\subseteq\mathfrak q_i$, then contraction to $W$ gives $P_j\subseteq P_i$. The ideals $P_i$ define distinct irreducible subvarieties of the same dimension, so $j=i$. Thus only the component indexed by $i$ survives in the support after localization at $\mathfrak q_i$.

The ring
\begin{equation}\label{eq:component-dvr}
\left((W/P_i)\otimes_\C R\right)_{(t)}
\end{equation}
is a domain because $W/P_i$ is geometrically integral over $\C$ and $(W/P_i)\otimes_\C R$ embeds into the domain $(W/P_i)\otimes_\C K$. It is a Noetherian local ring whose maximal ideal is generated by the nonzero element $t$, and hence a discrete valuation ring \cite[Tag~00PD]{Stacks}. Its residue field is $\Frac(W/P_i)$, and its only prime ideals are $(0)$ and $(t)$. By the support condition and the preceding argument, the only possible primes in $\Supp G_{\mathfrak q_i}$ are $\mathfrak p_i(W_R)_{\mathfrak q_i}$ and $\mathfrak q_i(W_R)_{\mathfrak q_i}$.

Write $A_i=(W_R)_{\mathfrak q_i}$. By the existence of finite prime filtrations \cite[Tag~00L0]{Stacks}, $G_{\mathfrak q_i}$ has a finite filtration whose successive quotients are isomorphic to
\[
A_i/\mathfrak p_iA_i
\quad\text{or}\quad A_i/\mathfrak q_iA_i.
\]
On a factor of the first type, multiplication by $t$ is injective, with cokernel of length one. On a factor of the second type, its kernel and cokernel both have length one. Thus all the required lengths are finite.

By the kernel--cokernel exact sequence for multiplication by $t$, the Euler characteristic $\chi_t$ is additive on short exact sequences of modules with this support. Hence $\chi_t(G_{\mathfrak q_i})$ equals the number of factors of the first type. Localizing the prime filtration at $\mathfrak p_i$ annihilates every factor of the second type, while each factor of the first type becomes the residue field of $(W_R)_{\mathfrak p_i}$. It follows that $\chi_t(G_{\mathfrak q_i})=\ell_{(W_R)_{\mathfrak p_i}}G_{\mathfrak p_i}$, as claimed.
\end{proof}

\begin{proposition}\label{prop:torsionfree-constant}
If $Q$ has no $R$-torsion and satisfies \eqref{eq:product-support}, then
\begin{equation}\label{eq:torsionfree-cycle}
\CC^L(Q/tQ)=\spc_\Lambda\CC^L(Q_K).
\end{equation}
\end{proposition}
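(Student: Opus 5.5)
The plan is to compare multiplicities component by component, fixing $i$ and combining \eqref{eq:defect-length} with Lemma~\ref{lem:local-euler} applied to $G=\mathscr Q/h\mathscr Q=\gr_FQ$. The relative associated graded module $G$ is a finite $W_R$-module, and by the product support condition \eqref{eq:product-support} its support lies in $\bigcup_iV(P_iW_R)$. This is exactly the hypothesis of Lemma~\ref{lem:local-euler}.

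Localizing the class identity \eqref{eq:defect-classes} at $\mathfrak q_i$ gives \eqref{eq:defect-length}. Its right-hand side is precisely $\chi_t(G_{\mathfrak q_i})$, because localization is exact and so commutes with forming $G/tG$ and $0:_Gt$. Lemma~\ref{lem:local-euler} then yields
\[
\mu_i(Q/tQ)=\ell_{(W_R)_{\mathfrak p_i}}G_{\mathfrak p_i}.
\]
Here we use that the lengths over $(W_R)_{\mathfrak q_i}$ of modules killed by $t$ agree with the lengths over $W_{P_i}$, since $(W_R)_{\mathfrak q_i}/(t)=W_{P_i}$.

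It remains to identify $\ell_{(W_R)_{\mathfrak p_i}}G_{\mathfrak p_i}$ with $\mu_i(Q_K)$. Since $t\notin\mathfrak p_i$, the ring $(W_R)_{\mathfrak p_i}$ equals $(W_K)_{P_iW_K}$, and $G_{\mathfrak p_i}=(G\otimes_RK)_{P_iW_K}$. The extension $R\to K$ is flat, so Lemma~\ref{lem:basechange} gives $G\otimes_RK\simeq\gr_F(Q_K)$ for the induced filtration on $Q_K$. Hence this length is $\mu_i(Q_K)$, the coefficient of $[\Lambda_i\times\Spec K]$ in $\CC^L(Q_K)$. Applying $\spc_\Lambda$ then gives equality of coefficients for every $i$. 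Components outside $\Lambda$ contribute nothing to either side, because of the support condition on $Q/tQ$ and on $Q_K$. This proves \eqref{eq:torsionfree-cycle}.

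The only delicate point is bookkeeping: making sure that the identity \eqref{eq:defect-classes}, which lives in a Grothendieck group, really localizes to an identity of finite lengths at $\mathfrak q_i$. The paper already observed that all the auxiliary modules $h^jU/h^{j+1}U$ and $0:_{h^jU}h$ are supported on $\Lambda$. So the additive invariant $V\mapsto\ell(V_{\mathfrak q_i})$ is defined on every module used in Lemma~\ref{lem:nilpotent}, and this legitimizes the passage to lengths. I would state this step explicitly rather than treat it as routine.
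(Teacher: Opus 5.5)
Your proposal is correct and follows the paper's proof essentially step by step. You combine \eqref{eq:defect-length} with Lemma~\ref{lem:local-euler} to get $\mu_i(Q/tQ)=\ell_{(W_R)_{\mathfrak p_i}}G_{\mathfrak p_i}$, and then identify this length with $\mu_i(Q_K)$ via the flat base change $R\to K$ in Lemma~\ref{lem:basechange} and the equality $(W_R)_{\mathfrak p_i}=(W_K)_{P_iW_K}$. Your explicit remark on why the Grothendieck-group identity localizes to finite lengths matches the justification the paper gives just before \eqref{eq:defect-length}.
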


\begin{proof}
For each $i$, combining \eqref{eq:defect-length} with Lemma~\ref{lem:local-euler}, we obtain $\mu_i(Q/tQ)=\ell_{(W_R)_{\mathfrak p_i}}G_{\mathfrak p_i}$. Since $R\to K$ is a localization, Lemma~\ref{lem:basechange} gives
\[
G\otimes_RK=\gr_F(Q_K).
\]
Moreover, $(W_R)_{\mathfrak p_i}=(W_K)_{P_iW_K}$, so $\ell_{(W_R)_{\mathfrak p_i}}G_{\mathfrak p_i}=\mu_i(Q_K)$. This proves \eqref{eq:torsionfree-cycle}.
\end{proof}

Although $G_{\mathfrak q_i}$ may have dimension one and need not have finite length, the kernel and cokernel of multiplication by $t$ on this module have finite length. In the prime filtration above, the factors $A_i/\mathfrak q_iA_i$ contribute equally to these two lengths and therefore cancel in the Euler characteristic $\chi_t(G_{\mathfrak q_i})$. The remaining contribution is precisely $\ell_{(W_R)_{\mathfrak p_i}}G_{\mathfrak p_i}$.

\begin{example}\label{ex:rees-defect}
Let $R=\C[t]_{(t)}$ and let $Q=R[x]$ have its usual $D_R$-module structure. Choose positive integers $a,m$. The submodule
\[
\mathscr Q=(t^a,h^m)R[x,h]\subseteq Q[h,h^{-1}]
\]
is the Rees module of a good order filtration on $Q$. It is stable under $x$ and $h\partial_x$, and becomes all of $Q[h,h^{-1}]$ after $h$ is inverted. Let $e_0=t^a$ and $e_1=h^m$. These generators satisfy $h^me_0=t^ae_1$ over $R[x,h]$. Hence, with grading shifts suppressed,
\[
G=\mathscr Q/h\mathscr Q\simeq R[x]e_0\oplus(R/(t^a))[x]e_1,
\]
and
\[
N=\mathscr Q/t\mathscr Q\simeq\C[x,h]/(h^m)e_0\oplus\C[x,h]e_1.
\]
The order symbol $\xi$ kills $G$, so the product support condition holds with the zero section. The $h$-torsion of $N$ is its first summand. At the generic point of the zero section of the special fiber,
\[
\ell(G/tG)=2,
\quad\ell(0:_Gt)=1,
\quad\ell(U/hU)=1.
\]
Since $Q_K$ and $Q/tQ$ both have multiplicity one, \eqref{eq:defect-length} reads $2-1=1$. The natural surjection from $G/tG$ to $\gr_F(Q/tQ)$ has a nonzero kernel even though $Q$ is flat over $R$.
\end{example}


\subsection{Parameter torsion and the characteristic cycle}

We now prove Theorem~\ref{thm:curve}. The $t$-torsion contributes the effective term in the specialization formula.

\begin{proof}[Proof of Theorem~\ref{thm:curve}]
By Noetherianity, $T=H^0_{(t)}M$ is a finite submodule of $M$ annihilated by some power $t^a$. Equip $T$ with the induced good filtration and $Q=M/T$ with the quotient filtration. Their associated graded modules satisfy the product
support condition \eqref{eq:product-support}.

Since $Q$ is torsion-free over the discrete valuation ring $R$, it is $R$-flat \cite[Tag~0539]{Stacks}. Tensoring $0\to T\to M\to Q\to0$ with $k$ therefore gives
\begin{equation}\label{eq:special-torsion-exact}
0\longrightarrow T/tT\longrightarrow M/tM\longrightarrow Q/tQ\longrightarrow0.
\end{equation}
Since $M_K=Q_K$, additivity of characteristic cycles and Proposition~\ref{prop:torsionfree-constant} give
\[
\CC^L(M/tM)-\spc_\Lambda\CC^L(M_K)=\CC^L(T/tT).
\]

The free resolution $0\to R\xrightarrow{t}R\to k\to0$ gives
\[
\Tor_1^R(M,k)=0:_Mt=0:_Tt.
\]
Apply Lemma~\ref{lem:nilpotent} with $z=t$ to $T$. The modules in its exact sequences are finite $D_R/(t)$-modules. With induced or quotient filtrations, their associated graded supports are contained in $\Lambda$, so characteristic cycles define an additive invariant on them. We conclude that
\[
\CC^L(T/tT)=\CC^L(0:_Tt)=\CC^L\bigl(\Tor_1^R(M,k)\bigr).
\]
Every coefficient of this last cycle is a nonnegative length. This proves \eqref{eq:curve-tor} and the asserted inequalities.
\end{proof}

\begin{corollary}\label{cor:flat-curve}
If $M$ is $R$-flat and satisfies the product support condition \eqref{eq:product-support}, its generic and special $L$-characteristic cycles have the same coefficients on the fixed components.
\end{corollary}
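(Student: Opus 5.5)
The statement follows from Theorem~\ref{thm:curve} once we know that $R$-flatness kills the torsion term. The plan is to check this and read off the equality of coefficients from \eqref{eq:curve-tor}.

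First, since $M$ is flat over $R$, we have $\Tor_1^R(M,k)=0$. Using the resolution $0\to R\xrightarrow{t}R\to k\to0$, this says that $0:_Mt=0$. Consequently $T=H^0_{(t)}M=0$: if $t^am=0$ with $a\ge1$ minimal, then $t^{a-1}m\in0:_Mt=0$, contradicting minimality.

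Next, we apply Theorem~\ref{thm:curve} to $M$ with its given good filtration. The product support condition \eqref{eq:product-support} holds by hypothesis. The right-hand side of \eqref{eq:curve-tor} is $\CC^L(T/tT)=\CC^L(0)=0$, so
\[
\CC^L(M/tM)=\spc_\Lambda\CC^L(M_K).
\]
Comparing coefficients of $[\Lambda_i]$ gives $\mu_i(M/tM)=\mu_i(M_K)$ for each $i$. This is the asserted equality of the special and generic $L$-characteristic cycles on the fixed components.

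Alternatively, one can bypass the torsion argument. An $R$-flat module over the discrete valuation ring $R$ is torsion-free, so $M=Q$ in the notation of Proposition~\ref{prop:torsionfree-constant}, which then gives \eqref{eq:torsionfree-cycle} directly. Neither route contains a real obstacle. The only point to watch is that flatness is used solely to get $R$-torsion-freeness. It is not used to make $t,h$ a regular sequence on the Rees module, and Example~\ref{ex:rees-defect} shows that can fail; Lemma~\ref{lem:two-parameters} already accounts for this.
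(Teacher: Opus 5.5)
Your proposal is correct and follows the paper's own argument: flatness gives $\Tor_1^R(M,k)=0:_Mt=0$, and then \eqref{eq:curve-tor} of Theorem~\ref{thm:curve} yields $\CC^L(M/tM)=\spc_\Lambda\CC^L(M_K)$. Your detour through $T=0$, and your alternative via Proposition~\ref{prop:torsionfree-constant}, are only cosmetic variations, since the last term of \eqref{eq:curve-tor} already equals $\CC^L(\Tor_1^R(M,k))$.
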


\begin{proof}
Flatness makes $\Tor_1^R(M,k)$ vanish, so the assertion follows from \eqref{eq:curve-tor}.
\end{proof}

In Theorem~\ref{thm:curve}, parameter torsion is computed after pullback along a parameter curve. We now give a flatness criterion in terms of the characteristic cycle for the order filtration.

\begin{corollary}\label{cor:flatness}
Assume \eqref{eq:product-support} for the order filtration. Then
\[
M_R\text{ is }R\text{-flat}
\quad\Longleftrightarrow\quad
\CC^F(M_R/tM_R)=\spc_\Lambda\CC^F(M_R\otimes_RK).
\]
\end{corollary}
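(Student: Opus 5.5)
The forward direction is Corollary~\ref{cor:flat-curve}, since flatness makes $\Tor_1^R(M_R,k)$ vanish. For the converse, I would assume the two cycles agree and apply Theorem~\ref{thm:curve}. It gives
\[
\CC^F\bigl(\Tor_1^R(M_R,k)\bigr)=\CC^F(T/tT)=0,
\]
where $T=H^0_{(t)}M_R$. The aim is to deduce $T=0$. Then $M_R$ is torsion-free over the discrete valuation ring $R$, hence flat \cite[Tag~0539]{Stacks}.

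The key step, and the only place where the order filtration is used, is the following claim: a finite $D$-module $N$ whose order-filtration characteristic cycle vanishes must be zero. I apply it to $N=T/tT$, which is a finite module over $D_R/(t)=D$, and $\CC^F(N)$ is supported on the fixed components $\Lambda_i$. For the order filtration $F=(0,\mathbf 1)$, every nonempty irreducible component of $\Ch^F(N)$ has dimension at least $n$ (cf.\ \cite[Theorem~3.2]{SW08}, or Bernstein's inequality). By the product support condition, $\Ch^F(N)\subseteq\Lambda$, and $\Lambda$ is a union of $n$-dimensional irreducible varieties. So if $N\neq0$, then $\gr_FN\neq0$, and some $\Lambda_i$ is an irreducible component of $\Supp\gr_FN$. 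Its prime $P_i$ is then minimal in the support, so the length $\ell(\gr_FN)_{P_i}$ is a positive finite number. This contradicts $\CC^F(N)=0$, hence $N=0$.

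Having $T/tT=0$, Nakayama's lemma finishes the argument. The module $T$ is finite over $D_R$ and is annihilated by $t^a$, so $T=tT=t^aT=0$. Alternatively, Lemma~\ref{lem:nilpotent} gives $[0:_Tt]=[T/tT]=0$ as classes, and the claim applied to $0:_Tt$ gives $0:_Tt=0$; a nonzero $t$-power-torsion module always has nonzero $t$-kernel, so again $T=0$.

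The main obstacle is justifying that vanishing of the cycle forces vanishing of the module. This is where the order filtration matters: for a general projective weight $L$, a nonzero module need not have characteristic variety of pure dimension $n$ reaching some $\Lambda_i$ with positive length. I would therefore state the dimension bound explicitly and check that $\gr_FN\neq0$ whenever $N\neq0$. That holds because the filtration on $N$ is exhaustive and, for the order filtration, bounded below.
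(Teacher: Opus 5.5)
Your proof is correct and follows essentially the paper's argument: Theorem~\ref{thm:curve} forces the cycle of the Tor module to vanish, and for the order filtration a nonzero module whose characteristic support lies in $\Lambda$ has a nonzero effective cycle (the paper phrases this through holonomicity). Hence $\Tor_1^R(M_R,k)=0:_{M_R}t$ vanishes and $M_R$ is torsion-free, so flat. The only difference is cosmetic: your main line applies this positivity to $T/tT$ and then uses that $t$ is nilpotent on $T$, whereas the paper applies it directly to $0:_{M_R}t$, which is your alternative line.
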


\begin{proof}
The module $\Tor_1^R(M_R,k)$ is finite over $D$, and its induced good order filtration has characteristic support contained in $\Lambda$. Hence it is holonomic. A nonzero holonomic module has a nonempty characteristic variety of dimension $n$, with positive multiplicities \cite[Chapter~3]{HTT08}. By Theorem~\ref{thm:curve}, equality of the cycles is therefore equivalent to $\Tor_1^R(M_R,k)=0$. Since $\Tor_1^R(M_R,k)=0:_{M_R}t$, its vanishing is equivalent to torsion-freeness and hence to flatness over $R$ \cite[Tag~0539]{Stacks}.
\end{proof}

\begin{example}\label{ex:rank-flatness}
Let $R=\C[t]_{(t)}$ and $\delta_0=D_{\A^1}/D_{\A^1}x$. The family
\[
M_R=(R\otimes_\C\OO_{\A^1})\oplus((R/(t))\otimes_\C\delta_0)
\]
has rank one on both fibers, whereas their characteristic cycles for the order filtration are $[T^*_{\A^1}\A^1]$ and $[T^*_{\A^1}\A^1]+[T_0^*\A^1]$, respectively. Its first Tor module is $\delta_0$. Thus the full characteristic cycle detects a failure of flatness that the rank does not detect.
\end{example}

\begin{example}\label{ex:curve-torsion}
Let $B=\C[b_1,b_2]$, $J=(b_1,b_2)$, and $M=B[x]\otimes_BJ$, with the usual differential action on $B[x]$. The module is torsion-free over $B$ and has relative characteristic support in the zero section for the order filtration. Its fibers are one copy of $\OO_{\A^n}$ off the origin and two copies at the origin. For the curve $b_1=t^a$, $b_2=t^b$, with $a,b>0$, the presentation with relation $b_2e_1-b_1e_2=0$ gives
\[
J\otimes_BR\simeq R\oplus R/(t^{\min(a,b)}),
\quad R=\C[t]_{(t)}.
\]
Thus $\Tor_1^R(M_R,\C)\simeq\C[x]$, and the characteristic cycle gains one copy of the zero section. Torsion-freeness over the parameter plane does not imply torsion-freeness after restriction to a curve through its nonflat point.
\end{example}


\section{Constructibility and upper semicontinuity}\label{sec:constructibility}

Let $B$ again be a finitely generated commutative $\C$-algebra, and let $M$ satisfy \eqref{eq:product-support}. We first prove constructibility of the multiplicity vector defined in \eqref{eq:mu-family}. Combined with the curve formula, this implies that its upper level sets are Zariski closed. For hypergeometric systems, stratifications of the parameter space also arise in the study of exceptional parameters \cite{Mat03} and the variation of series solutions \cite{BFM16,BFM18}. The construction here uses the relative associated graded module and applies to any family satisfying \eqref{eq:product-support}.

\begin{proposition}\label{prop:dense-open}
Assume that $B$ is a domain, with fraction field $K$. There is a nonzero $f\in B$ such that, for every closed point $b\in\Spec B_f$ and every $i$,
\begin{equation}\label{eq:dense-open-constant}
\mu_i(M_b)=\mu_i(M_K).
\end{equation}
\end{proposition}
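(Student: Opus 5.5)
Set $G=\gr_FM$, a finite $W_B$-module with $\Supp G\subseteq\Lambda\times\Spec B$. The plan is to choose $f$ so that, after inverting it, the Rees module and all the relevant associated graded data become free over $B_f$. Then forming fibers commutes with taking associated graded modules, and lengths at $P_i$ can be computed uniformly.

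\emph{Step 1: flatness of the Rees module.} Let $\mathscr M=\Rees_FM$, a finite module over $\RR_B$, and consider $G$ and $\mathscr M/h\mathscr M$. By generic freeness over the domain $B$ (applied to the finite $W_B$-module $G$ and to $\mathscr M/t$-type quotients, graded degree by degree), we choose $f$ with $G_f$ free over $B_f$. Then $h$-torsion in $\mathscr M\otimes_BB_f\otimes\kappa(b)$ vanishes. Indeed, by the Tor sequence for $0\to\mathscr M\xrightarrow{h}\mathscr M\to G\to0$, the kernel of $h$ on $\mathscr M\otimes\kappa(b)$ is a quotient of $\Tor_1^{B}(G,\kappa(b))=0$. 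Hence $H^0_{(h)}$ of the base change vanishes, and Lemma~\ref{lem:basechange} gives $\gr_FM_b=G\otimes_B\kappa(b)$.

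\emph{Step 2: uniform lengths.} Next take a finite prime filtration of $G_f$ (over $W_{B_f}$). Its factors are $W_{B_f}/\mathfrak P$, and by the support condition each $\mathfrak P$ contains some $P_jW_{B_f}$. For the factors with $\mathfrak P=P_iW_{B_f}$, the fiber $(W/P_i)\otimes\kappa(b)=W/P_i$ contributes exactly length one at $P_i$. For the other factors, $\mathfrak P\supsetneq P_jW_B$. Those with $\mathfrak P\cap B\neq0$ are removed by enlarging $f$ to a nonzero element of $\mathfrak P\cap B$. The remaining $\mathfrak P$ lie over the generic point of $B$ and strictly contain some $P_jW_B$. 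Their fibers at $b$ then have dimension less than $n$ at least for $b$ in a dense open set; this is again arranged by generic freeness/generic fiber dimension applied to $W_B/\mathfrak P$. Such fibers therefore have zero length at each $P_i$. By the same count at $K$, with the field $K$ in place of $\kappa(b)$, the number of factors $W/P_i$ equals $\mu_i(M_K)$.

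There is a subtlety in Step 2: the prime filtration is not preserved by the non-exact functor $-\otimes\kappa(b)$. To handle it, we shrink $f$ further so that every factor $W_{B_f}/\mathfrak P$, and every partial quotient of the filtration, is $B_f$-flat. Since there are finitely many modules, generic freeness applies to each. Tensoring is then exact along the filtration, and the length of $(G\otimes\kappa(b))_{P_i}$ is additive over the factors. This yields $\mu_i(M_b)=\mu_i(M_K)$ for all closed $b\in\Spec B_f$.

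The main obstacle is the bookkeeping in Step 2 for the extraneous primes $\mathfrak P$ that dominate $\Spec B$ but strictly contain some $P_jW_B$. We must show their special fibers contribute nothing at $P_i$. A cleaner route, which we would try first, is dimension counting. Each such $W_B/\mathfrak P$ has generic fiber of dimension $<n$. By upper semicontinuity of fiber dimension (Chevalley), after shrinking $f$ all fibers have dimension $<n$, so localizing at the height-$n$-codimension primes $P_i$ kills them.
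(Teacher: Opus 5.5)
Your proof is correct and follows the paper's argument. Generic freeness of $G=\gr_FM$, together with the Tor sequence for $0\to\mathscr M\xrightarrow{h}\mathscr M\to G\to0$, gives $\gr_F(M_b)\simeq G\otimes_B\kappa(b)$; a prime filtration of $G$ whose factors are made $B_f$-flat is then counted factor by factor.

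The only difference is in how you dispose of the dominant factors $W_B/\mathfrak P$ with $\mathfrak P\supsetneq P_jW_B$. You use generic fiber dimension (Chevalley). The paper instead picks $g\in\mathfrak P\setminus P_jW_B$ and writes its image in $(W/P_j)\otimes_\C B$ as $\sum c_a\otimes b_a$ with the $c_a$ linearly independent over $\C$. It then inverts a nonzero $b_a$, so that $g_b\notin P_j$ at every closed point, and uses $p\in P_j\setminus P_k$ for the other components. This is a more elementary device, but both arguments work.
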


\begin{proof}
Write $\mathscr M=\Rees_FM$ and $G=\gr_FM$. The $B$-algebra $W_B$ is finitely presented and Noetherian, and $G$ is a finitely presented $W_B$-module. Generic freeness \cite[Tag~051S]{Stacks} gives a nonzero $f_0\in B$ for which $G_{f_0}$ is free, hence flat, over $B_{f_0}$.

For $b\in\Spec B_{f_0}$, tensor the exact sequence
\[
0\longrightarrow\mathscr M\xrightarrow{\ h\ }\mathscr M\longrightarrow G\longrightarrow0
\]
with $\kappa(b)$. The vanishing of $\Tor_1^{B_{f_0}}(G_{f_0},\kappa(b))$ implies that $h$ is injective on $\mathscr M\otimes_B\kappa(b)$. Lemma~\ref{lem:basechange} now gives
\begin{equation}\label{eq:generic-gr-fiber}
G\otimes_B\kappa(b)\simeq\gr_F(M_b).
\end{equation}
The same identity holds over $K$. We next compare the lengths at the primes $P_i$ in the generic and special fibers.

Choose a finite prime filtration of $G$ whose successive quotients are of the form $W_B/\mathfrak r_j$ \cite[Tag~00L0]{Stacks}. Every $\mathfrak r_j$ contains some $P_iW_B$, by \eqref{eq:product-support}. Apply generic freeness to the finitely many factors, and shrink the open set so that all surviving factors are flat over the parameter ring. The filtration then remains exact after passage to every fiber over that open set. We now compute the contribution of each factor to the multiplicities.

If $\mathfrak r_j\cap B\ne0$, invert a nonzero element of this intersection; the factor disappears. If $\mathfrak r_j=P_iW_B$, the factor contributes one at $P_i$ in every fiber and zero at the other component primes. Consider the remaining case
\[
P_iW_B\subsetneq\mathfrak r_j.
\]
Choose $g\in\mathfrak r_j\setminus P_iW_B$. Its nonzero image in $(W/P_i)\otimes_\C B$ can be written
\begin{equation}\label{eq:coefficients}
\overline g=\sum_{a=1}^m c_a\otimes b_a,
\end{equation}
where the $c_a\in W/P_i$ are linearly independent over $\C$ and some $b_a$ is nonzero. Invert one such $b_a$. At every closed point of the resulting open set, \eqref{eq:coefficients} remains nonzero. Thus $g_b\notin P_i$, so the factor has zero localization at $P_i$ in that fiber. For $k\ne i$, choose $p\in P_i\setminus P_k$. This element annihilates the factor and becomes a unit at $P_k$, so its localization at $P_k$ also vanishes. The same reasoning applies to its generic fiber.

Since the prime filtration has finitely many factors, we may make all these localizations of $B$ simultaneously. On the resulting open set, the length at $P_i$ equals the number of factors isomorphic to $W_B/P_iW_B$ in the prime filtration. This count is the same in each closed fiber and in the generic fiber. Combining it with \eqref{eq:generic-gr-fiber} proves \eqref{eq:dense-open-constant}.
\end{proof}

\begin{corollary}\label{cor:finite-strata}
There is a finite partition of $\Spec B$ into locally closed algebraic strata on which all the functions $\mu_i(M_b)$ are constant. On every irreducible subvariety of the parameter space, the generic multiplicity vector equals the vector on a nonempty open subset of that subvariety.
\end{corollary}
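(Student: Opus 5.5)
We prove Corollary~\ref{cor:finite-strata} by Noetherian induction on closed subsets of $\Spec B$, using Proposition~\ref{prop:dense-open} at each step.

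\emph{Setup.} Let $Z\subseteq\Spec B$ be a closed subset. Give $M_Z=M\otimes_B\OO(Z)$ the induced filtration of Lemma~\ref{lem:basechange}. Then:
\begin{itemize}
\item this filtration is good and still satisfies the product support condition \eqref{eq:product-support} over $Z$;
\item for a closed point $b\in Z$, the fibre $(M_Z)_b$ equals $M_b$ as a $D$-module;
\item by Remark~\ref{rem:filtration-choice}, the multiplicities $\mu_i(M_b)$ do not depend on whether we compute them through $M$ or through $M_Z$.
\end{itemize}

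\emph{Induction.} Assume the statement holds for every proper closed subset of $Z$. Write $Z_1,\dots,Z_m$ for the irreducible components of $Z$, with reduced structure, so each coordinate ring $B_j=\OO(Z_j)$ is a domain.

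Apply Proposition~\ref{prop:dense-open} to $M\otimes_BB_j$. This gives a nonzero $f_j\in B_j$ such that all $\mu_i$ are constant on the closed points of $D(f_j)\subseteq Z_j$, equal to the generic values over $\Frac B_j$. Shrink this set to
\[
U_j=D(f_j)\setminus\bigcup_{k\ne j}Z_k,
\]
which is still nonempty, open in $Z$, and locally closed in $\Spec B$.

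The complement $Z\setminus\bigcup_jU_j$ is a proper closed subset of $Z$. The induction hypothesis partitions it into finitely many locally closed strata on which every $\mu_i$ is constant. Adding the sets $U_j$ gives the required finite partition of $Z$. Taking $Z=\Spec B$ proves the first assertion. The induction terminates because $\Spec B$ is Noetherian.

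\emph{Second assertion.} Let $Y\subseteq\Spec B$ be an irreducible subvariety. Apply Proposition~\ref{prop:dense-open} directly to $M\otimes_B\OO(\overline Y)$. The generic multiplicity vector over $\Frac\OO(\overline Y)$ then equals $\mu(M_b)$ for every closed point $b$ of a dense open subset of $\overline Y$. Intersecting this open set with $Y$ gives a nonempty open subset of $Y$, which is what is claimed.

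\emph{Main point to check.} The only subtlety is the compatibility of fibres under restriction to $Z$. Base change along $B\to B_j\to\kappa(b)$ gives the same filtered fibre as base change along $B\to\kappa(b)$. This holds because the tensor product is the same, and for the filtration both routes take the image of $F_kM\otimes\kappa(b)$ in $M_b$. Even if the filtrations differed, Remark~\ref{rem:filtration-choice} shows the multiplicities would not change.
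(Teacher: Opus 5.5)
Your proposal is correct and follows essentially the same route as the paper: apply Proposition~\ref{prop:dense-open} on each reduced irreducible component, delete the intersections with the other components, and recurse on the smaller closed complement. Your Noetherian induction plays the role of the paper's dimension count, your fibre-compatibility check replaces its remark about passing to $B_{\mathrm{red}}$, and the second assertion comes, as in the paper, from applying the proposition to the closure of the subvariety.
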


\begin{proof}
Closed fibers are unchanged when $B$ is replaced by its reduction. For each irreducible component of $\Spec B_{\mathrm{red}}$, apply Proposition~\ref{prop:dense-open} after pullback to that component. By deleting its intersections with the other components, the resulting open subsets may be chosen disjoint. Their complement is a closed set of strictly smaller dimension. Repeat the construction on that complement. There are only finitely many irreducible components at each stage, and dimension decreases, so the process terminates. Lemma~\ref{lem:basechange} preserves \eqref{eq:product-support} at every pullback. The final assertion is Proposition~\ref{prop:dense-open} applied to the chosen irreducible subvariety.
\end{proof}

The following consequence of Noether normalization allows us to apply the curve formula at points on the boundaries of strata.

\begin{lemma}\label{lem:curve-selection}
Let $Y$ be a positive-dimensional irreducible complex algebraic variety, let $U\subseteq Y$ be nonempty and open, and let $y\in Y$ be a closed point. There is an irreducible algebraic curve through $y$ that meets $U$.
\end{lemma}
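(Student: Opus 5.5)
The plan is to reduce to the case where $Y$ is affine and then use Noether normalization to transport the problem to affine space, where lines through a point are available. Replace $Y$ by an affine open neighbourhood $Y_0$ of $y$. Since $Y$ is irreducible, $Y_0$ is dense, so $U\cap Y_0$ is a nonempty open subset of $Y_0$. If the resulting curve $C_0\subseteq Y_0$ passes through $y$ and meets $U\cap Y_0$, then its closure in $Y$ is still an irreducible curve through $y$ meeting $U$. We may therefore assume that $Y$ is affine of dimension $m\ge1$.

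Apply Noether normalization to obtain a finite surjective morphism $\pi\colon Y\to\A^m$. Put $Z=Y\setminus U$. This is a proper closed subset, so $\dim Z<m$. Since $\pi$ is finite, $\pi(Z)$ is closed and has dimension $<m$; hence $\A^m\setminus\pi(Z)$ is a nonempty open set whose preimage lies in $U$. Choose a point $p\in\A^m\setminus\pi(Z)$ with $p\neq\pi(y)$, and let $\ell\subseteq\A^m$ be the line through $\pi(y)$ and $p$. When $m=1$, take $C=Y$ and stop.

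The key step, and the main obstacle, is to lift $\ell$ to an irreducible curve through $y$. The preimage $\pi^{-1}(\ell)$ is closed and is finite over $\ell$, so every irreducible component has dimension at most one. Because $\pi$ is finite and surjective, hence closed, the images of the components cover $\ell$. It is not automatic, however, that a component through $y$ surjects onto $\ell$, since an isolated point could sit over $\pi(y)$. To handle this, use the going-down property. $Y$ need not be normal, so first pass to the normalization $\tilde Y\to Y$, which is finite and surjective, and choose a point $\tilde y$ over $y$. The composite $\tilde Y\to\A^m$ is finite, $\tilde Y$ is normal and integral, and $\A^m$ is normal, so going-down holds. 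The prime of $\ell$ lies inside the maximal ideal of $\pi(y)$, and going-down then produces a prime of $\tilde Y$ contained in the point $\tilde y$ and lying over the generic point of $\ell$. Its closure is an irreducible curve $\tilde C$ through $\tilde y$ that maps onto $\ell$.

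Let $C$ be the image of $\tilde C$ in $Y$. It is an irreducible curve containing $y$ and mapping onto $\ell$. Pick a point $c\in C$ over $p$. Since $p\notin\pi(Z)$, the point $c$ lies in $U$, so $C$ meets $U$.
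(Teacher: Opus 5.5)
Your proof is correct. It shares the paper's outline: reduce to the affine case, take a Noether normalization $\pi\colon Y\to\A^m$, and choose a line $\ell$ through $\pi(y)$ not contained in $\pi(Y\setminus U)$. You lift $\ell$ differently, however.

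The paper uses a dimension count. $\pi^{-1}(\ell)$ is cut out by $m-1$ equations in the irreducible $m$-dimensional $Y$, so Krull's height theorem makes every component of dimension at least one, and finiteness gives at most one. Hence the component through $y$ is a curve finite over $\ell$, and therefore surjects onto it. In particular, the isolated point over $\pi(y)$ that you worry about cannot occur, and irreducibility of $Y$ is exactly what excludes it.

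You instead use going-down to obtain a prime contained in $\mathfrak m_{\tilde y}$ lying over the prime of $\ell$. This works, but the passage to the normalization is unnecessary. Going-down for an integral extension $A\subseteq B$ needs $A$ integrally closed and $B$ a domain, not $B$ normal. So it applies directly to $\C[z_1,\ldots,z_m]\subseteq\C[Y]$, and gives a curve in $Y$ through $y$ mapping onto $\ell$ without the detour through $\tilde Y$.

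The paper's route is a pure dimension count and yields slightly more: $\pi^{-1}(\ell)$ is equidimensional of dimension one. Your route constructs the desired component directly, at the cost of invoking the Cohen--Seidenberg theorem.
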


\begin{proof}
Work in an affine neighborhood of $y$ and replace $U$ by its intersection with that neighborhood. By Noether normalization \cite[Tag~00OW]{Stacks}, there is a finite morphism $\pi:Y\to\A^m$, where $m=\dim Y$. The image of $Y\setminus U$ is closed and has dimension smaller than $m$. Choose a line through $\pi(y)$ not contained in this image. Its inverse image is cut out by $m-1$ equations. The height theorem gives a component through $y$ of dimension at least one, and finiteness of $\pi$ gives dimension at most one. Its reduced structure is a curve mapping onto the line, so it meets $U$.
\end{proof}

\begin{proof}[Proof of Theorem~\ref{thm:general}]
By Corollary~\ref{cor:finite-strata}, partition the parameter space into finitely many irreducible locally closed strata $S_\alpha$ on which the multiplicity vector is constant. Fix $i$ and an integer $r$, and let $S_\alpha$ be a stratum on which $\mu_i\ge r$. We show that the same inequality holds at every closed point $b_0\in\overline{S_\alpha}$.

If $\overline{S_\alpha}$ has dimension zero, there is nothing to prove. Otherwise, $S_\alpha$ is a nonempty open subset of its irreducible closure. By Lemma~\ref{lem:curve-selection}, there is an irreducible curve through $b_0$ that meets $S_\alpha$. Normalize the curve and choose a closed point over $b_0$. The local ring $R$ at this point is a discrete valuation ring, with residue field $\C$. Its generic point maps to $S_\alpha$.

Give $M_R=M\otimes_BR$ the induced filtration. By Lemma~\ref{lem:basechange}, the product support condition holds for $M_R$. The generic multiplicity vector on the curve equals the constant vector on $S_\alpha$, because Proposition~\ref{prop:dense-open} gives equality on a nonempty open subset of the curve meeting $S_\alpha$. Theorem~\ref{thm:curve} therefore yields
\[
\mu_i(M_{b_0})\ge\mu_i((M_R)_K)=\mu_i(M_b)\ge r
\quad(b\in S_\alpha).
\]
It follows that, on closed points, the upper level set is
\[
\{b:\mu_i(M_b)\ge r\}
=\bigcup_{\alpha:\,\mu_i|_{S_\alpha}\ge r}\overline{S_\alpha}
\]
This is a finite union of Zariski closed sets, proving upper semicontinuity.
\end{proof}

\begin{corollary}\label{cor:generic-min}
If $B$ is a domain and $K=\Frac B$, then $\mu_i(M_b)\ge\mu_i(M_K)$ at every closed point $b$. If the family is flat over a smooth curve, the multiplicity vector is constant on that curve.
\end{corollary}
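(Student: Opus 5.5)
The first assertion is essentially a restatement of Proposition~\ref{prop:dense-open}, extended from a dense open subset to all closed points by means of the curve formula. The second assertion is Corollary~\ref{cor:flat-curve}, combined with Proposition~\ref{prop:dense-open}.

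\emph{First assertion.} By Proposition~\ref{prop:dense-open}, there is a nonzero $f\in B$ such that $\mu_i(M_b)=\mu_i(M_K)$ for all closed $b\in\Spec B_f$. Now let $b_0$ be an arbitrary closed point.
\begin{itemize}
\item If $\dim B=0$, then $B$ is a field finite over $\C$, hence $B=\C=K$, and there is nothing to prove.
\item Otherwise, Lemma~\ref{lem:curve-selection} (with $Y=\Spec B$ and $U=\Spec B_f$) gives an irreducible curve through $b_0$ that meets $U$.
\end{itemize}
In the second case, normalize the curve and take the local ring $R$ at a point over $b_0$. This $R$ is a discrete valuation ring with residue field $\C$, and its fraction field $K_R$ maps to a point of $U$ in the generic sense. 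Give $M_R=M\otimes_BR$ the filtration of Lemma~\ref{lem:basechange}, which preserves \eqref{eq:product-support}. Theorem~\ref{thm:curve} then gives $\mu_i(M_{b_0})\ge\mu_i((M_R)_{K_R})$.

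\emph{The main obstacle.} One must identify $\mu_i((M_R)_{K_R})$ with $\mu_i(M_K)$. The plan is to apply Proposition~\ref{prop:dense-open} to the pulled-back family on the curve: its generic multiplicity agrees with the fiber multiplicity at closed points of a dense open subset of the curve. Those points can be chosen inside $U$, since the curve meets $U$ in a dense open set. At such points the fiber multiplicity equals $\mu_i(M_K)$. This is exactly the argument already used in the proof of Theorem~\ref{thm:general}.

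A more direct route also works. Over $B_f$ the module $G_f$ is flat, so base change along the flat map $B_f\to K_R$ commutes with $\gr$. Moreover, the prime-filtration count in the proof of Proposition~\ref{prop:dense-open} is invariant under field extension, because the $P_iW$ stay prime after extension. Either route gives $\mu_i(M_{b_0})\ge\mu_i(M_K)$.

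\emph{Second assertion.} Suppose $B$ is the coordinate ring of a smooth curve and $M$ is $B$-flat. At any closed point $b$, the local ring $R=B_{\mathfrak m_b}$ is a discrete valuation ring, and $M_R$ is $R$-flat. Corollary~\ref{cor:flat-curve} then gives $\mu_i(M_b)=\mu_i(M_R\otimes_RK)$, and $M_R\otimes_RK=M_K$ since $B\to R\to K$ are localizations. So every closed fiber has the generic multiplicity vector, which is therefore constant along the curve.
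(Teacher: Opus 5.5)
Your proof is correct and is essentially the paper's argument. The paper gets the first assertion from Proposition~\ref{prop:dense-open} together with the upper semicontinuity of Theorem~\ref{thm:general}, whose proof is precisely the curve-selection argument you inline. It gets the second assertion from Corollary~\ref{cor:flat-curve} applied at each closed point.

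There is one slip in your optional ``more direct route'': $B_f\to K_R$ is not flat when $\dim B\ge2$, since it sends the nonzero prime of the curve to zero. The fact that $\gr$ commutes with this base change should instead come from $\Tor_1^{B_f}(G_f,K_R)=0$, exactly as for $\kappa(b)$ in the proof of Proposition~\ref{prop:dense-open}. The prime-filtration count is then applied at the non-generic point of $\Spec B_f$ underlying $K_R$, not to a field extension of $K$.
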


\begin{proof}
For the first assertion, use the dense open subset of Proposition~\ref{prop:dense-open} and upper semicontinuity. For the second, apply Corollary~\ref{cor:flat-curve} at every closed point of the curve.
\end{proof}

\begin{proposition}\label{prop:positive-functional}
Let $L_1,\ldots,L_m$ be fixed projective weights such that the same finite $D_B$-module $M$ satisfies \eqref{eq:product-support} for each weight, with a possibly different fixed set of components. Let $\Phi$ be a fixed integer linear combination of the corresponding multiplicities. Suppose that $\Phi(\Tor_1^R(M_R,k))\ge0$ for every DVR arising from pullback along a parameter curve. Then $b\mapsto\Phi(M_b)$ is Zariski upper semicontinuous and constant on a finite algebraic stratification.
\end{proposition}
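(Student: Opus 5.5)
The plan is to repeat the proof of Theorem~\ref{thm:general}, with the single multiplicity $\mu_i$ replaced by the functional $\Phi$. The role of Theorem~\ref{thm:curve} is played by the linearity of $\Phi$ together with the nonnegativity hypothesis. First, constructibility. For each weight $L_s$ choose a good $L_s$-filtration on $M$ satisfying \eqref{eq:product-support} with its component set. Apply Corollary~\ref{cor:finite-strata} to each of the $m$ filtered modules separately. Then take the common refinement of the $m$ finite partitions into locally closed sets, and refine further into irreducible locally closed pieces. This gives a finite algebraic stratification $\{S_\alpha\}$ on which every multiplicity entering $\Phi$ is constant, so $\Phi(M_b)$ is constant on each $S_\alpha$. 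By Proposition~\ref{prop:dense-open}, applied to each weight, the constant value on $S_\alpha$ equals the value of $\Phi$ at the generic fiber of $\overline{S_\alpha}$.

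Second, the specialization inequality along a curve. Fix a stratum $S_\alpha$ of positive dimension and a closed point $b_0\in\overline{S_\alpha}$. Lemma~\ref{lem:curve-selection} gives an irreducible curve through $b_0$ meeting $S_\alpha$. Normalizing it and localizing at a point over $b_0$ gives a DVR $R$ with residue field $\C$ whose generic point maps into $S_\alpha$. Give $M_R$ the induced filtrations for each $L_s$. By Lemma~\ref{lem:basechange} they still satisfy the product support conditions, so Theorem~\ref{thm:curve} applies for every weight. Multiplicity by multiplicity it gives
\[
\mu^{L_s}_{j}(M_R/tM_R)-\mu^{L_s}_j((M_R)_K)=\mu^{L_s}_j\bigl(\Tor_1^R(M_R,k)\bigr).
\]
Taking the fixed integer linear combination yields $\Phi(M_{b_0})-\Phi((M_R)_K)=\Phi(\Tor_1^R(M_R,k))\ge0$ by hypothesis. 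The generic value $\Phi((M_R)_K)$ agrees with $\Phi$ on $S_\alpha$. To see this, apply Proposition~\ref{prop:dense-open} to $M_R$ over the curve, for each weight: the curve's generic multiplicities equal those at closed points in a dense open subset of the curve, and that subset meets $S_\alpha$ in closed points. Hence $\Phi(M_{b_0})\ge\Phi|_{S_\alpha}$.

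Finally, upper semicontinuity follows as before. For an integer $r$,
\[
\{b:\Phi(M_b)\ge r\}=\bigcup_{\alpha:\,\Phi|_{S_\alpha}\ge r}\overline{S_\alpha}.
\]
The inclusion $\supseteq$ is the previous step, and $\subseteq$ holds because every closed point lies in some stratum. This is a finite union of closed sets.

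The only delicate point is the identification of the curve's generic value with the stratum value. That needs the curve's generic point to land in $S_\alpha$, which holds because the curve is irreducible and meets the open subset $S_\alpha$ of $\overline{S_\alpha}$. It also needs Proposition~\ref{prop:dense-open} to be applicable to $M_R$ for all $m$ weights simultaneously, which holds because we intersect finitely many dense opens. Unlike in Theorem~\ref{thm:general}, the coefficients of $\Phi$ may be negative, so termwise nonnegativity is unavailable. This is exactly why the hypothesis on $\Phi(\Tor_1^R)$ replaces it. Linearity of the formula in Theorem~\ref{thm:curve} is what makes the combination pass through.
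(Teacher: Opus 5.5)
Your proposal is correct and follows essentially the same route as the paper's proof: you take the common refinement of the stratifications for the $m$ weights, apply Theorem~\ref{thm:curve} weight by weight on a parameter curve and combine linearly (the same unfiltered Tor module appears at every weight), use Proposition~\ref{prop:dense-open} to identify the curve's generic value with the stratum value, and close the superlevel sets with the curve-selection argument from Theorem~\ref{thm:general}. One small precision: Proposition~\ref{prop:dense-open} requires a finitely generated domain, so it should be applied to the pullback of $M$ to an affine open of the normalized curve rather than to $M_R$ itself, exactly as the proof of Theorem~\ref{thm:general} does; this is evidently what you intend.
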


\begin{proof}
Take a common refinement of the finitely many stratifications from Theorem~\ref{thm:general}. On each stratum the expression $\Phi(M_b)$ is constant. On a parameter curve, applying Theorem~\ref{thm:curve} at each weight and taking the indicated combination gives
\[
\Phi(M_0)-\Phi(M_{\mathrm{gen}})=\Phi\bigl(\Tor_1^R(M_R,k)\bigr)\ge0.
\]
The same unfiltered Tor module occurs at every weight. Proposition~\ref{prop:dense-open} identifies the generic cycle expression with the value at general complex fibers. The curve-selection argument in the proof of Theorem~\ref{thm:general} now shows that every superlevel set of $\Phi$ is closed.
\end{proof}


\section{Hypergeometric systems}\label{sec:gkz}

We return to the matrix $A$ of Theorem~\ref{thm:gkz}. We first recall how the characteristic components are indexed, and then verify the relative support condition using the description of the symbol support in \cite{SW08}. Our conventions for affine semigroup rings follow \cite[Chapter~7]{MS05}; toric initial ideals and their subdivisions are treated in \cite[Chapters~4 and~8]{Stu96} and \cite{SST00}.

The torus $T_A=\operatorname{Hom}(\Z A,\C^*)$ acts on $\A^n_\xi$ with weights $a_j$. Its orbits determine the conormal spaces used to index the characteristic components.

\begin{definition}[Conormals to torus orbits]\label{def:orbit-conormal}
For a subset $\tau$ of the column indices, let $\mathbf1_\tau$ have coordinate one on $\tau$ and zero elsewhere, and let $O_A^\tau=T_A\cdot\mathbf1_\tau$. In the cotangent coordinates $(x,\xi)$ used in \cite{SW08}, define
\begin{equation}\label{eq:conormal-orbit}
C_A^\tau=\{(x,\xi):\xi\in O_A^\tau,\ \textstyle\sum_j a_{ij}x_j\xi_j=0\text{ for }1\le i\le d\},
\quad \Lambda_\tau=\overline{C_A^\tau}.
\end{equation}
\end{definition}

For $\tau=\varnothing$, this gives the zero section $\{\xi=0\}$. We next recall the umbrella, which selects the components associated with a given weight.

\begin{definition}[$(A,L)$-umbrella]\label{def:umbrella}
Let $L=(u,v)$ be a rational projective weight. Consider the points $[1:0]$ and $[v_j:a_j]$ in $\PP^d(\R)$. Since the cone of $A$ is pointed and its columns are nonzero, there is a rational linear form $\ell$ positive on every $a_j$. For a sufficiently small positive rational $\varepsilon$, the functional
\[
(t,a)\longmapsto\varepsilon t+\ell(a)
\]
is positive on the displayed representatives. Normalize all representatives to the affine hyperplane $\varepsilon t+\ell(a)=1$ and take their convex hull. The \emph{$(A,L)$-umbrella} $\Phi_A^L$ consists of its faces not containing $[1:0]$, labeled by all the columns lying on a face, and includes the empty face.
\end{definition}

This is the projective convex-hull construction of \cite[Definition~2.7]{SW08}. If every $v_j>0$, it is the familiar collection of faces not containing zero of
\[
\conv(0,a_1/v_1,\ldots,a_n/v_n).
\]
The projective construction allows zero and negative $v_j$ without changing the given weight. The use of all labels on a face is consistent with the subdivision convention for initial toric ideals in \cite[Chapter~8]{Stu96}; wall weights are not replaced by nearby generic weights.

Put $e_i=\sum_j a_{ij}x_j\xi_j\in W$ and
\begin{equation}\label{eq:symbol-ideal}
J_{A,L}=W\ini_v(I_A)+(e_1,\ldots,e_d).
\end{equation}
By \cite[Proposition~3.8 and equation~(3.2.2)]{SW08},
\begin{equation}\label{eq:sw-support}
V(J_{A,L})=\bigcup_{\tau\in\Phi_A^L}\Lambda_\tau.
\end{equation}
These are distinct irreducible subvarieties of dimension $n$. By \cite[Corollary~4.17]{SW08}, this union equals $\Ch^L(M_A(\beta))$ for every $\beta$. Holonomicity and the holonomic-family property also follow from \cite{MMW05,BGM15}.

Let $B=\C[b_1,\ldots,b_d]$ and consider
\begin{equation}\label{eq:relative-gkz}
\MM_B=D_B\big/\bigl(D_BI_A+\textstyle\sum_{i=1}^dD_B(E_i-b_i)\bigr).
\end{equation}
Assign filtration degree zero to its cyclic generator. The commutative associated graded module is cyclic over $W_B$. Each element of $\ini_vI_A$ annihilates it, and
\[
\sigma_L(E_i-b_i)=e_i,
\]
since $\deg_L(E_i)=c>0$ whereas $\deg_L(b_i)=0$. Therefore
\begin{equation}\label{eq:relative-ann}
J_{A,L}W_B\subseteq\Ann_{W_B}(\gr_F\MM_B).
\end{equation}
Together with \eqref{eq:sw-support}, this containment implies
\begin{equation}\label{eq:gkz-product}
\Supp\gr_F\MM_B
\subseteq\bigcup_{\tau\in\Phi_A^L}\Lambda_\tau\times\Spec B.
\end{equation}

\begin{proof}[Proof of Theorem~\ref{thm:gkz}]
We first reduce to the case $\Z A=\Z^d$. If $\Z A$ is a proper sublattice of $\Z^d$, choose a basis of $\Z A$ and let $U\in\operatorname{GL}_d(\Q)$ be the corresponding change-of-basis matrix. Replacing $(A,\beta)$ by $(UA,U\beta)$ preserves the toric ideal and the defining ideal of the hypergeometric module. It also preserves the $(A,L)$-umbrella and the corresponding characteristic cycle multiplicities. Since the induced change of parameters is an algebraic isomorphism, it suffices to consider the case $\Z A=\Z^d$.

By right exactness of tensor product, the fiber of $\MM_B$ in \eqref{eq:relative-gkz} at $b=\beta$ is $M_A(\beta)$. The relative module $\MM_B$ satisfies the product support condition \eqref{eq:gkz-product}, and Lemma~\ref{lem:basechange} ensures that the induced filtrations on its fibers are good. By \eqref{eq:cc-length}, the multiplicity along $\Lambda_\tau$ is $\mu_{A,0}^{L,\tau}(\beta)$. Applying Theorem~\ref{thm:general}, we conclude that $\{\beta\in\C^d:\mu_{A,0}^{L,\tau}(\beta)\ge r\}$ is Zariski closed for every integer $r$, as required.
\end{proof}

We record two consequences.

\begin{corollary}\label{cor:gkz-curve}
Let an irreducible algebraic curve map to $\C^d$, and let $R$ be the local ring at a closed point of its normalization, with fraction field $K$ and residue field $k=\C$. Write $\beta_0$ and $\beta_\eta$ for the special and generic parameters. For $\MM_R=\MM_B\otimes_BR$,
\begin{equation}\label{eq:gkz-tor-difference}
\mu_{A,0}^{L,\tau}(\beta_0)-\mu_{A,0}^{L,\tau}(\beta_\eta)
=\mu^{L,\Lambda_\tau}\bigl(\Tor_1^R(\MM_R,k)\bigr)\ge0.
\end{equation}
Here the generic multiplicity is computed over $K$, and $\mu^{L,\Lambda_\tau}$ denotes the coefficient at $\Lambda_\tau$.
\end{corollary}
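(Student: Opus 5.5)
This corollary follows by specializing Theorem~\ref{thm:curve} to the relative hypergeometric module. Let $\phi\colon\Spec R\to\Spec B=\C^d$ be the composite of the normalization with the given map. Put $\MM_R=\MM_B\otimes_BR$ and give it the filtration induced from the order-zero generator filtration on $\MM_B$, as in Lemma~\ref{lem:basechange}. Since $\MM_B$ satisfies \eqref{eq:gkz-product}, and the product support condition is preserved by base change with this filtration, $\MM_R$ satisfies \eqref{eq:product-support} with the fixed components $\Lambda_\tau$, $\tau\in\Phi_A^L$. These components are distinct, irreducible, of dimension $n$, and independent of the parameter by \eqref{eq:sw-support}.

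Next I identify the two fibers. By right exactness of the tensor product, $\MM_R/t\MM_R=\MM_B\otimes_B\kappa(\beta_0)=M_A(\beta_0)$. By Remark~\ref{rem:filtration-choice}, its $\Lambda_\tau$-coefficient computed with the induced filtration equals $\mu_{A,0}^{L,\tau}(\beta_0)$. Similarly, $\MM_R\otimes_RK$ is the hypergeometric module over $D_K$ with parameter $\beta_\eta\in K^d$, the image of the coordinates. Its coefficient at $\Lambda_\tau\times\Spec K$, computed at the prime $P_\tau W_K$, is by definition the generic multiplicity $\mu_{A,0}^{L,\tau}(\beta_\eta)$. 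The statement explicitly takes this generic multiplicity to be computed over $K$, so no comparison with complex parameters is needed here. If one wanted one, Proposition~\ref{prop:dense-open} applied to the image curve would identify it with the value at general closed points.

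Finally, Theorem~\ref{thm:curve} applied to $M=\MM_R$ gives the cycle identity \eqref{eq:curve-tor}. Reading off the coefficient at $\Lambda_\tau$, using that $\spc_\Lambda$ sends $[\Lambda_\tau\times\Spec K]$ to $[\Lambda_\tau]$, yields \eqref{eq:gkz-tor-difference}. Nonnegativity holds because the right-hand side is a length of a localized associated graded module.

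The only point needing care is that $\Tor_1^R(\MM_R,k)$ is a $D$-module whose characteristic cycle is well defined with support in $\bigcup_\tau\Lambda_\tau$. This is covered by the proof of Theorem~\ref{thm:curve}: the module is $0:_Tt$ with its induced good filtration, and its support lies in the fixed union.
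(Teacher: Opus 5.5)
Your proposal is correct and follows the paper's own proof: use Lemma~\ref{lem:basechange} to carry the product support condition \eqref{eq:gkz-product} over to $\MM_R$ with the induced filtration, then read off the $\Lambda_\tau$-coefficient of the identity in Theorem~\ref{thm:curve}. Your extra remarks, on independence of the good filtration for the special fiber and on nonnegativity as a length, only make explicit what the paper's two-line proof leaves implicit.
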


\begin{proof}
Apply Lemma~\ref{lem:basechange} to \eqref{eq:gkz-product}, and then take the coefficient of $\Lambda_\tau$ in Theorem~\ref{thm:curve}.
\end{proof}

\begin{corollary}\label{cor:gkz-loci}
For fixed $A$ and $L$, the vector $(\mu_{A,0}^{L,\tau}(\beta))_{\tau\in\Phi_A^L}$ takes finitely many values. If $\mu_A^{L,\tau}$ denotes its generic coefficient at $\Lambda_\tau$, then
\[
\mathcal E_A^{L,\tau}=\{\beta:\mu_{A,0}^{L,\tau}(\beta)>\mu_A^{L,\tau}\}
\]
is a proper Zariski closed subset of $\C^d$.
\end{corollary}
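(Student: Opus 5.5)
The statement of Corollary~\ref{cor:gkz-loci} has two parts, finiteness of the set of values and properness plus closedness of each jump locus. Both follow from results already proved for the relative module $\MM_B$ over $B=\C[b_1,\ldots,b_d]$, which satisfies the product support condition \eqref{eq:gkz-product}.

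For finiteness, I apply Corollary~\ref{cor:finite-strata} to $\MM_B$. This gives a finite partition of $\Spec B=\C^d$ into locally closed strata on which every $\mu_{A,0}^{L,\tau}(\beta)$ is constant. Closed fibers of $\MM_B$ are the modules $M_A(\beta)$, and their multiplicities are the $\mu_{A,0}^{L,\tau}(\beta)$ by \eqref{eq:cc-length}. Hence the vector takes at most as many values as there are strata. As in the proof of Theorem~\ref{thm:gkz}, I first reduce to $\Z A=\Z^d$ by a rational change of basis; this does not affect the statement.

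For the jump loci, $B$ is a domain with fraction field $K=\C(b)$, so I interpret $\mu_A^{L,\tau}$ as $\mu_\tau(\MM_K)$. Proposition~\ref{prop:dense-open} gives a nonzero $f\in B$ such that $\mu_{A,0}^{L,\tau}(\beta)=\mu_A^{L,\tau}$ for all $\beta\in D(f)$. So this generic coefficient is also the value at a general complex parameter, which agrees with the value of \cite{SW08}, although that comparison is not needed. Corollary~\ref{cor:generic-min} gives $\mu_{A,0}^{L,\tau}(\beta)\ge\mu_A^{L,\tau}$ for every $\beta$. Since the values are integers,
\[
\mathcal E_A^{L,\tau}=\{\beta:\mu_{A,0}^{L,\tau}(\beta)\ge\mu_A^{L,\tau}+1\},
\]
and this set is Zariski closed by Theorem~\ref{thm:gkz} with $r=\mu_A^{L,\tau}+1$. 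It does not meet the nonempty open set $D(f)$, so it is a proper subset of $\C^d$.

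I expect no serious obstacle. The one point needing care is that the generic coefficient over $K$ must be the same number that is attained on a dense open set of closed points. Proposition~\ref{prop:dense-open} supplies exactly this, and it is what makes the inequality $\mu_{A,0}^{L,\tau}(\beta)\ge\mu_A^{L,\tau}$ and the properness of $\mathcal E_A^{L,\tau}$ hold simultaneously.
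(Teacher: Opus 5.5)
Your proposal is correct and follows essentially the same route as the paper: finiteness comes from Corollary~\ref{cor:finite-strata}, and the generic value is attained on a nonempty open set by Proposition~\ref{prop:dense-open}. The jump locus is then the superlevel set at threshold $\mu_A^{L,\tau}+1$, which is closed by Theorem~\ref{thm:gkz}, and it is proper because it misses that open set. Your extra appeals to Corollary~\ref{cor:generic-min} and to the reduction to $\Z A=\Z^d$ are harmless but not needed, since integrality of the values alone gives $\{\mu>\mu_A\}=\{\mu\ge\mu_A+1\}$.
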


\begin{proof}
Finiteness follows from Corollary~\ref{cor:finite-strata}. The generic value occurs on a nonempty open set, and the strict upper level set is the closed set defined by the threshold $\mu_A^{L,\tau}+1$ in Theorem~\ref{thm:gkz}.
\end{proof}

For $L=F$ and $\tau=\varnothing$, Theorem~\ref{thm:gkz} recovers the upper semicontinuity of the holonomic rank for $A$-hypergeometric systems established in \cite[Theorems~2.6]{MMW05}. For arbitrary $L$, the generic coefficients and their lower-bound property are those of \cite[Theorems~4.21 and~4.28]{SW08}. Formula~\eqref{eq:gkz-tor-difference} additionally compares the generic point of any parameter curve with every specialization on that curve, including curves contained in an exceptional locus.

The formulas in \cite{Ber11,BFF20} express the multiplicities using ranking toric modules and Euler--Koszul homology. Their alternating contributions need not be nonnegative individually. Equation~\eqref{eq:gkz-tor-difference} identifies the resulting change for a parameter specialization with the coefficient of an effective cycle.

For use in Section~\ref{sec:ext}, we recall the Euler--Koszul construction. Give $P$ the grading $\deg\partial_j=a_j$ and $D$ the compatible grading $\deg x_j=-a_j$.

\begin{definition}[Euler--Koszul homology]\label{def:euler-koszul}
For a graded $P$-module $N$ and a parameter $\beta\in\C^d$, the commuting $D$-linear endomorphisms on $D\otimes_PN$ are defined on a homogeneous element $m$ by
\[
(E_i-\beta_i)\circ m=(E_i-\beta_i+\deg_i m)m.
\]
The \emph{Euler--Koszul complex} of $N$ at $\beta$ is the Koszul complex of these endomorphisms. We denote its homology by $H_j(N,\beta)$.
\end{definition}

For $N=S_A$, its zeroth homology is $H_0(S_A,\beta)=M_A(\beta)$. The other modules used below have filtrations by semigroup rings of faces of $A$.

\begin{definition}\label{def:toric-module}
A \emph{finite toric module} is a graded $P$-module with a finite filtration whose factors are graded translates of the semigroup rings of faces of $A$.
\end{definition}

We use the vanishing theorem for the higher Euler--Koszul homology of a finite toric module from \cite[Theorem~6.6]{MMW05} and the parameter independence of its Euler characteristic cycles from \cite[Theorem~4.11]{SW08}. In particular, if $N$ is a full-dimensional Cohen--Macaulay toric module, then $H_i(N,\beta)=0$ for all $i>0$ by \cite[Theorem~6.6]{MMW05}. It follows from \cite[Theorem~4.11]{SW08} that the characteristic cycle of $H_0(N,\beta)$ is independent of $\beta$. This homological framework extends the earlier explicit computations for monomial curves and simplicial configurations \cite{CDD99,Sai02,MM06,Oku06}. The behavior of Euler--Koszul complexes under toric localization, and their relation to twisted Gauss--Manin systems, is studied in \cite{SW09}. 

\begin{example}\label{ex:rankjump-seed}
Let
\[
T_0=\begin{pmatrix}1&1&1&1\\0&1&3&4\end{pmatrix},
\quad b=(1,2).
\]
This is the classical rank-jump configuration of Sturmfels--Takayama \cite{ST98}, also used in \cite{MMW05,BFF20}. Its saturated semigroup consists of $(k,j)$ with $k\ge0$ and $0\le j\le4k$. The only missing degree is $b$: sums of two elements of $\{0,1,3,4\}$ fill $\{0,\ldots,8\}$, and adding zero or four proves the assertion at every subsequent level. Hence
\begin{equation}\label{eq:normalization-T}
\widetilde S_{T_0}/S_{T_0}\simeq\C(b),
\end{equation}
where every positive-degree semigroup generator annihilates $\C(b)$. Its Euler--Koszul homology in degree $j$ is $\C[x]\otimes\bigwedge^j\C^2$ at $\beta_T=b$ and vanishes at all other parameters. The normalization is Cohen--Macaulay by \cite{Hoc72}. Its zero-section coefficient is four for every projective weight: the columns of $T_0$ are homogeneous, and the corresponding triangles in the umbrella have total normalized area four. The long exact sequence in Euler--Koszul homology therefore gives
\begin{equation}\label{eq:muT}
\mu_{T_0,0}^{L,\varnothing}(\beta_T)=4+\one_{\{b\}}(\beta_T).
\end{equation}
The correction at $b$ is $2-1=1$.
\end{example}

\begin{example}\label{ex:nonconvex-short}
To obtain a nonconvex example with a parameter jump, let
\[
B_0=\begin{pmatrix}2&0&1\\0&3&1\end{pmatrix},
\quad A_\dagger=B_0\oplus T_0.
\]
The toric ring of $B_0$ is the hypersurface with equation $\partial_1^3\partial_2^2-\partial_3^6$. Its zero-section coefficient is independent of the parameter. Writing $\delta(v)=6v_3-3v_1-2v_2$, the coefficient equals five for $\delta(v)\le0$ and six for $\delta(v)>0$. Indeed, the two umbrella edges in the first open region give normalized areas $2+3$, while the other open region gives six. On the wall, the formula in \cite[Theorem~4.21]{SW08} subtracts the area of $\conv((2,0),(0,3),(1,1))$ from that of $\conv(0,(2,0),(0,3),(1,1))$, giving $6-1=5$.

Take $v=(2,3,1,1,1,1,1)$ and $u=4\mathbf1-v$. In the $B_0$ face the unscaled triangles contain $(2,0)$ and $(0,3)$ but not their midpoint, so this weight is nonconvex in the sense of \cite[Section~6]{BFF20}. The presentation as an external product, together with the tensor-product filtration, gives
\[
\mu_{A_\dagger,0}^{L,\varnothing}(\beta)
=5\bigl(4+\one_{\{\beta_3=1,\,\beta_4=2\}}(\beta)\bigr).
\]
Thus the multiplicity increases from $20$ to $25$ on the indicated parameter plane. By Corollary~\ref{cor:gkz-curve}, this increase equals the coefficient of the zero section in the characteristic cycle of the Tor module. The matrix $B_0$ has column lattice $\Z^2$, so the normalized areas here use the intrinsic column lattice.
\end{example}


\section{Gevrey irregularity under specialization}\label{sec:gevrey}

We now apply the specialization results for characteristic cycles to Gevrey irregularity. Using the Gevrey index formula, we establish upper semicontinuity of generic irregularity and its graded dimensions for $A$-hypergeometric systems.


\subsection{The Gevrey index formula}

We continue with $Y=\{x_n=0\}$ and work on the corresponding analytic spaces. We first recall the Gevrey sheaves used to define the invariants in \eqref{eq:irregularity}.

\begin{definition}[Gevrey series and irregularity]\label{def:gevrey-irregularity}
Let $s\ge1$ and let $\sum_{m\ge0}f_m(x')x_n^m$ be a formal series with holomorphic coefficients on $Y$. It has \emph{Gevrey order at most $s$} if the series $\sum_{m\ge0}f_m(x')x_n^m/(m!)^{s-1}$ converges locally. The sheaf of such series is denoted by $\OO_{\widehat{X|Y}}(s)$; at $s=1$ it is the sheaf of convergent germs along $Y$.

For a holonomic $D$-module $N$ and $s>1$, the \emph{irregularity complex of order $s$} and its generic degree-zero rank are
\[
I_s(N)=\RH_{\DD_{X^{\mathrm{an}}}}\bigl(N^{\mathrm{an}},
\OO_{\widehat{X|Y}}(s)/\OO_{X|Y}\bigr),
\quad d_s(N)=\rank_Y\mathcal H^0 I_s(N).
\]
The rank is computed at a general point of $Y$.
\end{definition}

These are the Gevrey sheaves used in \cite{Meb90,LM99} and in the explicit hypergeometric constructions of \cite{FF10,FC11}. We use their sheaf-theoretic description; the comparison with explicit bases of Nilsson series is developed in \cite{DMM12,FF20}. To describe the graded dimensions, we also consider quotients between two Gevrey levels.

\begin{definition}\label{def:gevrey-graded}
Let $N$ be holonomic, and let $1<a<b$ be non-slope levels. We denote by $e_{a,b}(N)$ the generic dimension along $Y$ of
\[
\mathcal Hom_{\DD_{X^{\mathrm{an}}}}\bigl(N^{\mathrm{an}},
\OO_{\widehat{X|Y}}(b)/\OO_{\widehat{X|Y}}(a)\bigr).
\]
For $\rho>1$, choose non-slope levels $1<a<\rho<b$ such that $[a,b]$ contains no slope of $N$ other than possibly $\rho$. We write $e_\rho(N)=e_{a,b}(N)$ for the generic dimension of the graded piece of the Gevrey filtration at $\rho$.
\end{definition}

We next express these dimensions in terms of characteristic cycles. Set $L(r)=F+(r-1)V_Y$ for rational $r>1$. Thus $u_i=0$, $v_i=1$ for $i<n$, and $u_n=1-r$, $v_n=r$. Let $Z=T_X^*X$ be the zero section and let $C_Y=T_Y^*X$. A coefficient of an absent component is defined to be zero. Put
\[
\phi_r(N)=m_Z^r(N)-m_Y^r(N),
\]
where the two coefficients are taken in $\CC^{L(r)}(N)$.

\begin{lemma}\label{lem:index}
Let $N$ be holonomic. Assume that its algebraic slopes along $Y$ belong to a finite set $\Sigma\subset\Q_{>1}$ and that its $L(r)$-characteristic varieties are $(F,V_Y)$-bihomogeneous for $r\notin\Sigma$. Choose rational $r_0>1$ before every member of $\Sigma$, and rational $r_+>s$ so that no member of $\Sigma$ is in $(s,r_+]$. Then
\begin{equation}\label{eq:G}
d_s(N)=\phi_{r_+}(N)-\phi_{r_0}(N).
\end{equation}
For non-slope levels $1<a<b$, the dimensions in Definition~\ref{def:gevrey-graded} satisfy
\begin{equation}\label{eq:grade}
e_{a,b}(N)=d_b(N)-d_a(N)\geq0.
\end{equation}
\end{lemma}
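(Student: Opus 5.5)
This lemma is the Gevrey index theorem of Laurent--Mebkhout \cite{LM99}, rewritten for the weights $L(r)$, so I will derive it from their comparison theorem rather than recompute solutions.

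\emph{Step 1: rank as an Euler characteristic.} For $s>1$ not a slope, \cite{LM99} shows that the complex $I_s(N)$ is concentrated in degree zero at a general point of $Y$. Its generic Euler characteristic is therefore $d_s(N)$. The same vanishing in positive degrees shows that $d_s$ is constant on each interval between consecutive members of $\Sigma$. Since $d_s$ depends only on the open interval containing $s$, I may replace $s$ by any non-slope level in $(s,r_+]$. At $s$ itself, if $s\in\Sigma$, I use the right continuity $d_s=d_{s+\epsilon}$ of the Gevrey filtration from \cite{LM99}; this is why $r_+$ is required to lie strictly beyond $s$ with no slope in $(s,r_+]$.

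\emph{Step 2: the local index formula.} For a non-slope rational $r$, the index theorem of Laurent--Mebkhout computes the generic Euler characteristic of $\RH(N,\OO_{\widehat{X|Y}}(r))$ along $Y$ from the $L(r)$-characteristic cycle. It equals the multiplicity of the zero section minus the multiplicity of the conormal $C_Y$, namely $\phi_r(N)$. Here I use the bihomogeneity hypothesis. It guarantees that $\Ch^{L(r)}(N)$ is a union of conormal-type varieties near a general point of $Y$, and that only $Z$ and $C_Y$ contribute at such a point. This makes the microlocal index computation of \cite{Lau87,LM99} apply with the coefficients $m_Z^r,m_Y^r$. For $r_0$ below every slope, $\OO_{\widehat{X|Y}}(r_0)$ and $\OO_{X|Y}$ have the same generic solution complex; this is the statement that there is no irregularity below the first slope. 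Hence $\chi(\OO_{X|Y})=\phi_{r_0}(N)$. Taking Euler characteristics along the short exact sequence $0\to\OO_{X|Y}\to\OO_{\widehat{X|Y}}(r_+)\to\OO_{\widehat{X|Y}}(r_+)/\OO_{X|Y}\to0$ gives \eqref{eq:G}.

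\emph{Step 3: the graded pieces.} I apply $\RH(N^{\mathrm{an}},-)$ to
\[
0\to\OO_{\widehat{X|Y}}(a)/\OO_{X|Y}\to\OO_{\widehat{X|Y}}(b)/\OO_{X|Y}\to\OO_{\widehat{X|Y}}(b)/\OO_{\widehat{X|Y}}(a)\to0.
\]
Generically along $Y$ all three complexes are concentrated in degree zero, by \cite{LM99} at non-slope levels. The long exact sequence therefore reduces to a short exact sequence of $\mathcal H^0$'s, which gives $e_{a,b}=d_b-d_a$. Nonnegativity follows because $\mathcal H^0I_a\subseteq\mathcal H^0I_b$.

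The main obstacle is Step 2, specifically matching the exact signs and normalizations of the Laurent--Mebkhout index with the coefficients of $\CC^{L(r)}$ as defined via lengths in \eqref{eq:cc-length}. I would settle the sign convention by checking the formula on the model module $D/D(x_n^{r}\partial_n-1)$, which has a single slope.
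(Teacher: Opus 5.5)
Your proposal is essentially the paper's proof: generic degree-zero concentration from \cite{LM99}, the Gevrey index formula giving $\phi_r=m_Z^r-m_Y^r$ at non-slope levels (only $Z$ and $C_Y$ contribute at a general point of $Y$, by bihomogeneity), the convergent level computed at $r_0$, right continuity at a slope, and the exact sequences of coefficient sheaves for \eqref{eq:G} and \eqref{eq:grade}. The main difference is that the paper fixes the sign by tracing the conventions of \cite{LM99} (the level-$p/q$ filtration $qL(r)$, the symplectic identification with $T^*\Lambda$, the Euler-obstruction signs, and the extra sign for formal completion), whereas you calibrate on a model; this works if you take the model as $\OO_{\A^{n-1}}\boxtimes D_1/D_1(x_n^p\partial_n-1)$ for varying integers $p\ge2$, which has slope $p/(p-1)$ and irregularity $p-1$, and also note that constancy of $d_s$ between slopes comes from the equality of analytic and algebraic slopes in \cite{LM99}, not from degree-zero concentration.
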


\begin{proof}
We apply the Gevrey index theorem with the following conventions. Laurent--Mebkhout \cite[Theorem~2.3.1 and \S2.4]{LM99}, using the positivity theorem of Mebkhout \cite{Meb90}, prove that the irregularity complexes with their Gevrey maps form an increasing filtration in the abelian category of perverse sheaves; the graded objects are perverse as well. With the usual middle-perversity convention of \cite{BBD82,KS90}, in which a local system on the smooth $(n-1)$-fold $Y$ is shifted by $n-1$, the complexes here are $I_s(N)[n-1]$. The unshifted convention in \eqref{eq:irregularity} is the one used for the degree-zero ranks in \cite[\S7]{BFF20}. On a dense open subset of $Y$, each unshifted irregularity complex and each unshifted graded complex has cohomology only in degree zero. Consequently, its generic Euler characteristic equals the nonnegative dimension of its degree-zero cohomology.

The index formula uses the conventions for characteristic cycles and Euler obstructions in microlocal index theory \cite{Kas85,KS90}. We specify the sign for the Gevrey formula of \cite{LM99}. In coordinates $(x',y;\xi',\eta)$, the symplectic identification with the cotangent bundle of $\Lambda=T_Y^*X$ is
\[
(x',y;\xi',\eta)\longmapsto(x',\eta;\xi',-y).
\]
It is the identification used for $(F,V_Y)$ in \cite[Remark~3.3]{SW08}. We distinguish two numerical conventions for slopes. On p.~57 of \cite{LM99}, a rational level $r=p/q>1$ defines the filtration $qF+(p-q)V_Y=qL(r)$. Its weight on $y^l\partial_y^k\partial_{x'}^\alpha$ is $p(k-l)+q(l+|\alpha|)$. Thus the Gevrey level in this paper is exactly the parameter in that filtration. The microlocal type in \cite[Remark~3.3]{SW08} for $aF+bV_Y$ is $1+a/b$; for our $L(r)$ that label is $r/(r-1)$, whereas the Gevrey order is $r$. The zero section of $T^*X$ becomes the conormal of the zero section $Y\subset\Lambda$, while $C_Y$ becomes the zero section of $T^*\Lambda$. The Euler-obstruction signs of these two cycles at a general point of $Y\subset\Lambda$ are respectively $-1$ and $+1$. The index formula for formal completion in \cite[p.~64, immediately before \S2.4]{LM99} has the additional minus sign. Consequently the generic index for solutions with values in a Gevrey completion at a non-slope level is $m_Z^r-m_Y^r$.

At a general point of $Y$, only these two components contribute. At a bihomogeneous level the cycles are conormal cycles. If the irreducible base of another conormal in $T^*\Lambda$ contained the zero section $Y\subset\Lambda$, dimension would force that base to be $Y$ or $\Lambda$. All other bases meet $Y$ in proper closed subsets, which can be avoided in computing the generic index. 

The index at the convergent level is $\phi_{r_0}$. At a finite Gevrey level $s$, the index is $\phi_{r_+}$: the one-sided convention follows from the open and closed Gevrey levels in \cite[Theorem~2.3.1]{LM99}. In particular a slope equal to $s$ is included. The same convention is used in \cite[equation~(7-1)]{BFF20}. The short exact sequence defining the quotient by convergent series gives \eqref{eq:G} as a difference of the two indices. Since the cohomology is generically concentrated in degree zero, this difference equals $d_s(N)$.

For \eqref{eq:grade}, the exact sequence of coefficient sheaves gives the distinguished triangle
\[
I_a(N)\longrightarrow I_b(N)\longrightarrow
\RH_{\DD_{X^{\mathrm{an}}}}\bigl(N^{\mathrm{an}},\OO_{\widehat{X|Y}}(b)/\OO_{\widehat{X|Y}}(a)\bigr)\xrightarrow{+1}.
\]
The first arrow, shifted by $n-1$, is a monomorphism in the perverse category by the theorem on the Gevrey filtration just cited. Its cone is the perverse quotient. On a common dense open subset of $Y$ this is an exact sequence of local systems, unshifted to degree zero. Taking their dimensions proves \eqref{eq:grade} and identifies its left side with the ordinary $\mathcal Hom$ in the statement.
\end{proof}

The same exactness implies that $d_s$ is additive on short exact sequences of holonomic modules, with arrows reversed on solutions. Modules whose irregularity vanishes along $Y$ form a Serre subcategory. These assertions hold for every real level: the finitely many slopes are rational by \cite[Theorem~2.4.2]{LM99}; a real level between consecutive slopes has the value in that interval. At a slope the right-hand value is used. It follows from \eqref{eq:grade} that $e_\rho(N)$ is independent of the sufficiently close non-slope levels used in Definition~\ref{def:gevrey-graded}.


\subsection{Support, slopes, and specialization}

Let $M$ be a finite $D_B$-module. We assume that
\begin{equation}\label{eq:gevrey-support}
M\text{ satisfies \eqref{eq:product-support} for }F\text{ and every rational }L(r),\ r>1,
\end{equation}
where the fixed set of components may depend on the weight. Suppose also that there is a finite set $\Sigma\subset\Q_{>1}$ such that
\begin{equation}\label{eq:gevrey-bihom}
\text{each prescribed component for }L(r)\text{ is }(F,V_Y)\text{-bihomogeneous if }r\notin\Sigma.
\end{equation}
The latter condition means invariance under both of the grading actions defined by $F$ and $V_Y$, under the symplectic identification used above.

\begin{lemma}\label{lem:tor-control}
Under \eqref{eq:gevrey-support}--\eqref{eq:gevrey-bihom}, for any pullback to a DVR $R$ associated with a parameter curve, the module
\[
T=\Tor_1^R(M_R,k)=\ker(t:M_R\to M_R)
\]
is a finite holonomic $D$-module. All complex fibers and $T$ have analytic slopes along $Y$ in $\Sigma$, and satisfy Lemma~\ref{lem:index} with the same choices of levels.
\end{lemma}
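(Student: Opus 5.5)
I will first reduce everything to the relative support conditions \eqref{eq:gevrey-support} and \eqref{eq:gevrey-bihom}, transported to the fibers and to $T$ by the filtered constructions of Sections~\ref{sec:filtered} and~\ref{sec:dvr}.

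\emph{Finiteness and holonomicity.} Give $M_R$ the filtration induced from $M$ by Lemma~\ref{lem:basechange}; then $M_R$ satisfies \eqref{eq:product-support} for $F$ and for every rational $L(r)$, with the same component sets. Since $R$ is Noetherian and $T=0:_{M_R}t$ is a $D_R$-submodule annihilated by $t$, it is a finite $D_R/(t)=D$-module. With the induced good $F$-filtration, $\gr_F T$ is a subquotient of $\gr_F M_R$ localized to the special fiber, so its support lies in the fixed $n$-dimensional set $\Lambda$ for $F$. Hence $T$ is holonomic, as in the proof of Corollary~\ref{cor:flatness}. The same argument applies to each complex fiber $M_b$ through Lemma~\ref{lem:basechange}.

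\emph{Characteristic varieties and bihomogeneity.} For each rational $r>1$, the induced $L(r)$-filtration on $T$ has $\Ch^{L(r)}(T)$ contained in the union of the prescribed components for $L(r)$. Since $T$ is holonomic, every component has dimension at least $n$ by \cite[Theorem~3.2]{SW08}. Therefore $\Ch^{L(r)}(T)$ is a union of some of the prescribed components. If $r\notin\Sigma$, each of these is $(F,V_Y)$-bihomogeneous by \eqref{eq:gevrey-bihom}, so $\Ch^{L(r)}(T)$ is bihomogeneous. The same holds for every fiber $M_b$ and for the generic fiber.

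\emph{Slopes.} I expect this to be the main obstacle. I must convert bihomogeneity at all non-$\Sigma$ levels into the statement that the algebraic slopes, and hence the analytic slopes, lie in $\Sigma$. I would use Laurent's characterization of slopes, as recalled in \cite{LM99} and used in \cite{SW08,BFF20}: $r$ is an algebraic slope of $N$ along $Y$ precisely when $\Ch^{L(r)}(N)$ is not bihomogeneous. Since the set of slopes is finite and consists of rationals \cite[Theorem~2.4.2]{LM99}, this characterization at rational levels suffices. The comparison theorem of Laurent--Mebkhout \cite{LM99} then identifies algebraic and analytic slopes. Consequently every slope of $T$ and of each $M_b$ lies in $\Sigma$.

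Finally, the hypotheses of Lemma~\ref{lem:index} depend only on $\Sigma$. Therefore the same choices of $r_0$, $r_+$ and of the non-slope levels $a,b$ serve uniformly for all fibers and for $T$.
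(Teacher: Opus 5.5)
Your proposal is correct and follows essentially the same route as the paper's proof. Both arguments establish that $T$ is finite over $D_R/(t)\simeq D$, then place the support of the induced graded module inside the special slice of the product support, via the injection $\gr T\hookrightarrow\gr M_R$ for $T$ and the surjection \eqref{eq:basechange-surj} for the fibers. Both then use the dimension bound of \cite[Theorem~3.2]{SW08} to force each $L(r)$-characteristic variety to be a union of prescribed components, hence bihomogeneous for $r\notin\Sigma$, and finish with Laurent's slope criterion together with \cite[Theorem~2.4.2]{LM99}. One minor point: that dimension bound applies to every finite $D$-module, so holonomicity of $T$ is not needed at that step.
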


\begin{proof}
The ring $D_R$ is Noetherian, so the kernel is finite over $D_R$. It is killed by $t$, hence finite over $D_R/(t)\simeq D$. For each weight use the induced filtration on this kernel. The associated graded module injects into the associated graded module of the curve family; its support is contained in the special slice of the prescribed product support. The support condition for the order filtration shows that $T$ is holonomic.

Every irreducible component of the characteristic variety of a finite $D$-module for a projective filtration has dimension at least $n$ by \cite[Theorem~3.2]{SW08}. A closed subset with this property in a finite union of irreducible $n$-dimensional varieties is a union of some of those varieties. Hence, for $r\notin\Sigma$, the characteristic variety of $T$ is bihomogeneous. The same argument applies to each fiber, using the surjection \eqref{eq:basechange-surj} for the induced filtration. There is no algebraic slope at such $r$. The analytic and algebraic slopes coincide by \cite[Theorem~2.4.2]{LM99}. Thus the analytic slopes of $T$ and of all fibers lie in $\Sigma$.
\end{proof}

\begin{theorem}\label{thm:gevrey-general}
Under \eqref{eq:gevrey-support} and \eqref{eq:gevrey-bihom}, $d_s(M_b)$ and each individual generic Gevrey graded dimension are Zariski upper semicontinuous. More precisely, on every parameter curve,
\begin{align}
 d_s(M_0)-d_s(M_{\mathrm{gen}})&=d_s(T),\label{eq:Gtor}\\
 e_\rho(M_0)-e_\rho(M_{\mathrm{gen}})&=e_\rho(T).\label{eq:Etor}
\end{align}
Here $M_{\mathrm{gen}}$ denotes a general complex fiber of the parameter curve, and the right-hand sides are nonnegative.
\end{theorem}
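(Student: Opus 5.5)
The plan is to reduce everything to the index formula of Lemma~\ref{lem:index} and the curve specialization formula of Theorem~\ref{thm:curve}, and then to invoke Proposition~\ref{prop:positive-functional}.

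\emph{Step 1: fix the levels uniformly.} Fix $s>1$. By \eqref{eq:gevrey-bihom} the exceptional set $\Sigma$ is finite and independent of $b$. I choose rational $r_0>1$ below every member of $\Sigma$, and rational $r_+>s$ such that $(s,r_+]\cap\Sigma=\varnothing$. For $\rho>1$, I choose non-slope levels $1<a<\rho<b$ such that $[a,b]\cap\Sigma\subseteq\{\rho\}$, with all of $a,b$ and the auxiliary $r_0,r_+$ taken outside $\Sigma$. By Lemma~\ref{lem:tor-control}, every complex fiber $M_b$ and every Tor module $T$ coming from a parameter curve has its slopes in $\Sigma$ and satisfies the hypotheses of Lemma~\ref{lem:index} with these same levels. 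The lemma then gives
\[
d_s(N)=\phi_{r_+}(N)-\phi_{r_0}(N)
\]
for each such $N$. Combining \eqref{eq:grade} with \eqref{eq:G}, $e_\rho(N)$ equals the fixed combination
\[
\phi_{r_+(b)}-\phi_{r_+(a)}
\]
of the multiplicities $m_Z^r,m_Y^r$ at finitely many fixed weights $L(r)$. Call this integer combination $\Phi$.

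\emph{Step 2: curve formula.} Pull back to the DVR $R$ of a parameter curve. Lemma~\ref{lem:basechange} preserves \eqref{eq:gevrey-support}. For each of the finitely many weights $L(r)$ involved, Theorem~\ref{thm:curve} gives the cycle identity $\CC^{L(r)}(M_0)-\spc\,\CC^{L(r)}(M_K)=\CC^{L(r)}(T)$, with the same unfiltered $T=\ker t$ at every weight. Taking the coefficients at $Z$ and $C_Y$, which are among the prescribed components or else are zero, and forming $\Phi$, yields
\[
\Phi(M_0)-\Phi(M_K)=\Phi(T).
\]
By Proposition~\ref{prop:dense-open}, applied to the curve at each weight and intersected over the finitely many weights, $\Phi(M_K)=\Phi(M_{\mathrm{gen}})$ for a general complex fiber. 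Applying Step 1 to $M_0$, $M_{\mathrm{gen}}$ and $T$ converts $\Phi$ back into $d_s$ or $e_\rho$, which proves \eqref{eq:Gtor} and \eqref{eq:Etor}.

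\emph{Step 3: positivity and semicontinuity.} The right-hand sides are nonnegative. For $d_s(T)$ this holds because it is the generic rank of $\mathcal H^0$. For $e_\rho(T)$ it follows from \eqref{eq:grade}, which rests on the perversity of the Gevrey filtration \cite{LM99}. Hence $\Phi(\Tor_1^R(M_R,k))\ge0$ for every such DVR, and Proposition~\ref{prop:positive-functional} gives Zariski upper semicontinuity.

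The main obstacle is Step 1's uniformity. One must ensure that a level choice valid for the generic fiber remains valid for the special fiber and for $T$, whose slopes might a priori differ. That is exactly what Lemma~\ref{lem:tor-control} provides, since the slope set $\Sigma$ comes from the relative support. One must also ensure that a slope equal to $s$ is handled by the right-hand convention $r_+$, consistently for all three modules.
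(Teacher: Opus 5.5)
Your proposal is correct and follows essentially the same route as the paper. Like the paper, you fix a finite, fiber-independent list of rational non-slope levels, use Lemmas~\ref{lem:index} and~\ref{lem:tor-control} to write $d_s$ and $e_\rho$ as fixed linear combinations of multiplicities for the fibers and for $T$, apply Theorem~\ref{thm:curve} at each weight, and conclude via nonnegativity and Proposition~\ref{prop:positive-functional}; your explicit appeal to Proposition~\ref{prop:dense-open}, to identify $M_K$ with a general complex fiber, is the step the paper leaves inside Proposition~\ref{prop:positive-functional}.
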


\begin{proof}
Choose one rational representative of every component of $(1,\infty)\setminus\Sigma$. For a given $s$, choose the representative immediately to the right of $s$ or in its interval, and the representative next to $1$. For a given $\rho>1$, choose representatives on both sides; equivalently, first take $a=\rho-\epsilon$ and $b=\rho+\epsilon$ with rational $\epsilon>0$ sufficiently small. These choices have the same graded dimension. This is a finite list independent of the fiber and of the parameter curve. Lemmas~\ref{lem:index} and~\ref{lem:tor-control} express all the relevant invariants as fixed linear combinations at these weights, both for complex fibers and for $T$. Applying Theorem~\ref{thm:curve} gives \eqref{eq:Gtor} and \eqref{eq:Etor}. These invariants are generic dimensions of an irregularity object or its perverse graded quotient, and are therefore nonnegative. Proposition~\ref{prop:positive-functional} completes the proof.

For $s$ below every member of $\Sigma$, the two cycle expressions agree and $d_s=0$. For $\rho\notin\Sigma$, choose both levels in the same interval; the graded object vanishes. The first slope and several changes at the same slope are handled by the same quotient. All facets changing at a repeated slope contribute to that one object. For each comparison only finitely many modules occur, and their generic-open subsets of $Y$ have a common dense open intersection.
\end{proof}


\subsection{Upper semicontinuity of Gevrey irregularity}

\begin{proof}[Proof of Theorem~\ref{thm:gkz-app}]
By \eqref{eq:gkz-product}, the cyclic family \eqref{eq:relative-gkz} satisfies \eqref{eq:gevrey-support} for every projective weight. Along $L(r)$ there are finitely many umbrella walls. They can also be read from the finitely many supporting hyperplanes of $\conv(0,a_1,\ldots,a_{n-1})$ not passing through the origin: the possible changes occur when $a_n/r$ meets one of them; degenerate cases can be read directly from the finite rational fan of initial ideals. The walls are rational. Away them the umbrella components are $(F,V_Y)$-bihomogeneous, as in the slope criterion of \cite[Corollary~4.18]{SW08} and \cite[Remark~7.1]{BFF20}. Thus \eqref{eq:gevrey-support}--\eqref{eq:gevrey-bihom} hold. Theorem~\ref{thm:gevrey-general} proves both assertions.

By the right-hand convention, the rational representatives compute the specified Gevrey level even when that level is a slope. The support theorem applies equally to nonconvex weights and to ties.
\end{proof}

For comparison, \cite{CG15,CFG21} realize Gevrey series as asymptotic expansions of integral solutions, while \cite{ET15} uses rapid decay homology in the confluent setting and \cite{CFKT15} studies summation through modified systems. Such descriptions provide information about individual solutions beyond their dimensions.

The following example shows that flatness alone does not control irregularity. 

\begin{example}\label{ex:pressure}
Consider the family
\[
M=D_{\A^1}[t]/D_{\A^1}[t](x^2\partial-t).
\]
As a left $D$-module, $M$ is isomorphic to $D$, with the central parameter $t$ acting by right multiplication by $x^2\partial$. A nonzero polynomial in this operator is nonzero, by its highest order symbol, and the Weyl algebra is a domain. Thus $M$ is torsion-free, and hence flat, over $\C[t]$ and its localization at $(t)$.

For $t\ne0$ the connection off $x=0$ has solution $\exp(-t/x)$ and slope $2$. Its generic irregularity along the point is $1$ for $s\geq2$ and $0$ below $2$. At $t=0$, the quotient $D/D(x^2\partial)$ is regular: factorization gives an extension with factors $D/D\partial$ and $D/Dx^2$, the latter supported at the point. Its irregularity is zero at all levels. For the order filtration, both fibers have characteristic cycle $[Z]+2[T_0^*\A^1]$.

For the filtrations needed in Theorem~\ref{thm:gevrey-general}, the relative principal symbol is
\[
\begin{cases}
x^2\xi,&1<r<2,\\
x^2\xi-t,&r=2,\\
t,&r>2.
\end{cases}
\]
At $r=2$ it defines moving curves rather than a finite union of fixed curves times the parameter line. At $r>2$ its special support is the whole two-dimensional cotangent space, not a union of fixed one-dimensional components. Thus the support conditions for $L(r)$ fail, while the condition for the order filtration holds, in agreement with Corollary~\ref{cor:flatness}.
\end{example}

\begin{example}\label{ex:multiple-slopes}
We consider two normal configurations in which the relevant facets contribute at distinct slopes or at the same slope. Let
\[
A=(e_1,e_2,(4,0),(3,2),(0,4),a_6),\quad Y=\{x_6=0\}.
\]
Its semigroup is $\N^2$, so the relevant characteristic cycles and irregularities are constant in the parameter. The two facets of $\conv(0,A\setminus\{a_6\})$ not through zero have supporting forms $(1/4,1/8)$ and $(1/6,1/4)$, and normalized triangle volumes $8$ and $12$. If $a_6=(8,8)$, the slopes are $3$ and $10/3$, and $d_s$ is $0$, $8$, and $20$ on $1<s<3$, $3\leq s<10/3$, and $s\geq10/3$, respectively. Thus the two graded dimensions are $8$ and $12$. If $a_6=(9,6)$, both facets change at $3$, and the one graded dimension there is $20$. Since $a_6/3=(3,2)$, both facets adjacent to $(3,2)$ contribute to the jump at $s=3$. The generic formula \cite[equation~(7-2)]{BFF20}, together with the Cohen--Macaulay property, proves these statements for all parameters.
\end{example}


\section{The Ext arrangement and irregular jumps}\label{sec:ext}

In this section, we compare the jump loci of generic Gevrey irregularity and its graded dimensions with the exceptional arrangement defined by local cohomology. We prove that these loci are contained in the exceptional arrangement and construct non-Cohen--Macaulay examples showing that the reverse inclusion can fail.


\subsection{Local cohomology and partial saturation}

We use the positive grading $\deg\partial_j=a_j$ and $\deg x_j=-a_j$ fixed in Section~\ref{sec:gkz}, with the translation convention of \cite[\S1]{BFF20}:
\[
N(b)_v=N_{v-b},\quad \deg N(b)=b+\deg N,
\quad H_i(N(b),\beta)=H_i(N,\beta-b)(b).
\]
Here $H_i$ denotes Euler--Koszul homology, and $\mathfrak m$ also denotes the image in $S_A$ of the ideal $(\partial_1,\ldots,\partial_n)\subset P$. Set $\varepsilon_A=\sum_j a_j$. We now define the exceptional arrangement associated with a toric module.

\begin{definition}\label{def:exceptional-arrangement}
For a finite full-dimensional toric module $N$, its \emph{exceptional arrangement} is
\begin{equation}\label{eq:exceptional}
\mathcal Z(N)=\overline{\bigcup_{q<d}\deg H^q_{\mathfrak m}(N)}
=-\qdeg\bigoplus_{q<d}\Ext_P^{n-q}(N,\omega_P),
\quad \omega_P=P(\varepsilon_A).
\end{equation}
Here $\qdeg$ denotes the Zariski closure of the set of degrees of nonzero homogeneous elements, and the overline has the same meaning.
\end{definition}

The second equality in \eqref{eq:exceptional} is graded local duality \cite[Chapter~3]{BH98}; see also \cite[Chapter~13]{MS05} for multigraded local cohomology. The grading on the canonical module is the one displayed in \eqref{eq:exceptional}. By \cite[Theorem~6.6]{MMW05}, the higher Euler--Koszul homology of $N$ vanishes outside $\mathcal Z(N)$, with the positive grading fixed above.

For irregularity along $Y$, we apply this construction to the partial saturation of \cite[Notation~5.4]{BFF20}.

\begin{definition}\label{def:partial-saturation}
The \emph{partial saturation} of $S_A$ along $a_n$ is the ring
\begin{equation}\label{eq:S-n}
N=S_A^{\{n\}}=\C[(\N A+\Z a_n)\cap\R_{\geq0}A].
\end{equation}
\end{definition}

This ring is independent of the Gevrey level. It lies between $S_A$ and its finite normalization, and is a submodule of $S_A[\partial_n^{-1}]$. Hence it is a finite toric $P$-module. The relevant theory of toric modules and localization is developed in \cite{MMW05,SW09}. Write $d_s(A)$ and $e_\rho(A)$ for the generic parameter values, which exist by Theorem~\ref{thm:gkz-app} and its finite-stratification proof.

\begin{proposition}\label{prop:ext-inclusion}
For every $s>1$ and every individual level $\rho>1$,
\[
\{\beta:d_s(M_A(\beta))>d_s(A)\}\subseteq\mathcal Z(S_A^{\{n\}}).
\]
The analogous inclusion holds for a graded dimension exceeding its generic value. If $S_A^{\{n\}}$ is Cohen--Macaulay, all these generic dimensions are constant in $\beta$.
\end{proposition}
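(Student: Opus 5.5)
We plan to prove the inclusion by comparing $M_A(\beta)$ with the Euler--Koszul zeroth homology of the partial saturation $N=S_A^{\{n\}}$. The short exact sequence of finite toric modules $0\to S_A\to N\to N/S_A\to0$ gives a long exact sequence in Euler--Koszul homology,
\[
H_1(N,\beta)\to H_1(N/S_A,\beta)\to M_A(\beta)\to H_0(N,\beta)\to H_0(N/S_A,\beta)\to0 .
\]
The key input, following the construction of \cite[Notation~5.4]{BFF20}, is that $N/S_A$ is annihilated by a power of $\partial_n$ after localization, or more precisely that $S_A[\partial_n^{-1}]=N[\partial_n^{-1}]$. Hence every Euler--Koszul homology module of $N/S_A$ is a holonomic module whose solutions along $Y=\{x_n=0\}$ carry no irregularity. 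We would verify this first: $N/S_A$ has a toric filtration by translates of face rings $S_F$ with $a_n\notin F$. For such a face, $H_i(S_F(b),\beta)$ is a system in which $x_n$ enters only through $\partial_n$ acting as zero. It is therefore an external product with $\C[x_n]$ and has no slopes along $Y$. Since $d_s$ and $e_{a,b}$ are additive and vanish on this Serre subcategory (remark after Lemma~\ref{lem:index}), the long exact sequence gives $d_s(M_A(\beta))=d_s(H_0(N,\beta))-d_s(\text{image of }H_1(N,\beta))$. A cleaner route is to note that all terms coming from $N/S_A$ have $d_s=0$, so $d_s(M_A(\beta))=d_s(H_0(N,\beta))$ whenever $H_1(N,\beta)$ contributes nothing irregular. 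The same holds for $e_{a,b}$, and hence for $e_\rho$.

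Next, for $\beta\notin\mathcal Z(N)$, \cite[Theorem~6.6]{MMW05} gives $H_i(N,\beta)=0$ for all $i>0$. Then $d_s(M_A(\beta))=d_s(H_0(N,\beta))$, and we must show this value is independent of $\beta$ off $\mathcal Z(N)$. We would use Lemma~\ref{lem:index}: $d_s(H_0(N,\beta))$ is a fixed combination of $L(r)$-multiplicities of $H_0(N,\beta)$ at finitely many rational weights. By \cite[Theorem~4.11]{SW08}, the Euler characteristic $\sum_i(-1)^i\CC^{L}(H_i(N,\beta))$ is independent of $\beta$. Off $\mathcal Z(N)$ this sum reduces to $\CC^L(H_0(N,\beta))$, which is therefore constant there. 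Since $\mathcal Z(N)$ is a proper closed set and the generic value is attained on a dense open set, this constant is $d_s(A)$, respectively $e_\rho(A)$. So any $\beta$ with $d_s(M_A(\beta))>d_s(A)$ lies in $\mathcal Z(N)$. If $N$ is Cohen--Macaulay, the lower local cohomology vanishes, so $\mathcal Z(N)=\varnothing$ and the dimensions are constant.

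The main obstacle is the first step: showing rigorously that the Euler--Koszul homology of $N/S_A$ contributes zero to $d_s$ and $e_\rho$ along $Y$, including at slope levels. The slope hypotheses of Lemma~\ref{lem:index} also have to be checked for these auxiliary modules. I would handle this by filtering $N/S_A$ by toric pieces $S_F(b)$ with $a_n\notin\R F$. For each piece, I would check via \eqref{eq:sw-support} for the face $F$ that the $L(r)$-characteristic varieties are independent of $r$, hence bihomogeneous at every level. Then $\phi_r$ is constant in $r$ and the index formula \eqref{eq:G} gives zero. The statement then extends from pieces to $N/S_A$ by additivity of $d_s$ and $e_{a,b}$ on long exact sequences.
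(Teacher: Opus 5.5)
Your proposal is correct and follows the paper's argument: the same sequence $0\to S_A\to N\to Q\to0$, vanishing irregularity of every $H_i(Q,\beta)$ via face factors $S_G$ with $a_n\notin G$ (external products with $\OO$ in $x_n$), vanishing of the higher Euler--Koszul homology of $N$ off $\mathcal Z(N)$, and parameter independence of the Euler characteristic cycle. The only deviation is how the constant value off $\mathcal Z(N)$ is identified with $d_s(A)$. You intersect two dense open sets, whereas the paper shows that the Euler cycle of $N$ equals that of $S_A$, since each proper-face factor of $Q$ has Euler factor $(1-1)^{\operatorname{codim}G}=0$. Also, your hedge about $H_1(N,\beta)$ is unnecessary: its image lies in $H_1(Q,\beta)$, so $d_s(M_A(\beta))=d_s(H_0(N,\beta))$ holds for every $\beta$.
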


\begin{proof}
Write $N=S_A^{\{n\}}$ and $Q=N/S_A$. Since $N$ is finite and embeds in $S_A[\partial_n^{-1}]$, one power of $\partial_n$ kills $Q$. A finite toric filtration of $Q$ has factors which are translates of $S_G$ with $a_n\notin G$. Indeed, if $a_n\in G$, multiplication by $\partial_n$ is injective on $S_G$, contradicting its nilpotence on the factor.

For each such factor, the Euler--Koszul complex is the external product of a complex in the variables indexed by $G$ with $\OO$ in the inactive variables, including $x_n$. This follows directly from $D\otimes_P S_G$: after the degree correction in the Euler endomorphisms, the inactive variables have no Euler action. Its homology is noncharacteristic for $Y$ and has zero irregularity along $Y$ at every level. The exactness of the irregularity functor in the perverse category makes this vanishing stable under subquotients and extensions. Induction on the toric filtration, using the long exact sequences in Euler--Koszul homology, shows that every $H_i(Q,\beta)$ has zero irregularity. Consequently $H_0(S_A,\beta)$ and $H_0(N,\beta)$ have the same irregularity objects, and therefore the same generic graded dimensions, for every $\beta$.

Off $\mathcal Z(N)$ the higher Euler--Koszul homology of $N$ is zero. Its Euler characteristic cycle is independent of $\beta$ by \cite[Theorem~4.11]{SW08}. It equals that of $S_A$: the quotient $Q$ has a filtration by modules associated with proper faces, and each such factor has an exterior Euler factor $(1-1)^{\operatorname{codim}G}=0$. Thus $H_0(N,\beta)$ has the generic characteristic coefficients of $S_A$ off $\mathcal Z(N)$, at every weight used in Lemma~\ref{lem:index}. This proves the inclusions and the Cohen--Macaulay implication.
\end{proof}

\begin{remark}\label{rem:shift}
The Ext expression in \cite[Conjecture~7.10]{BFF20} is written with a shift $-\varepsilon_A$. With the convention $N(b)_v=N_{v-b}$, graded local duality instead takes the form \eqref{eq:exceptional}. For example, $\Ext_P^n(\C(b),P)$ has generator degree $-b-\varepsilon_A$. We use the canonical-module expression throughout. The counterexample below has an empty Gevrey jump locus and a nonempty exceptional arrangement, so the failure of equality persists under either fixed translation.
\end{remark}


\subsection{A non-Cohen--Macaulay family with no irregular jump}

\begin{theorem}\label{thm:counterexample}
For every integer $K\geq4$, let
\begin{equation}\label{eq:AK}
A_K=\begin{pmatrix}
1&1&1&1&3&3&2&0\\
0&1&3&4&0&12&4&0\\
0&0&0&0&0&0&1&K
\end{pmatrix},\quad Y=\{x_8=0\}.
\end{equation}
Then $\Z A_K=\Z^3$, the cone is pointed, and $S_{A_K}^{\{8\}}=S_{A_K}$ has depth $2$ and dimension $3$. Nevertheless, for every $\beta\in\C^3$,
\begin{equation}\label{eq:constant-irreg}
d_s(A_K,\beta)=
\begin{cases}
0,&1<s<K/3,\\
36,&s\geq K/3.
\end{cases}
\end{equation}
The only slope along $Y$ is $K/3>1$. The canonical exceptional arrangement is the nonempty union
\begin{equation}\label{eq:ZK}
\mathcal Z(S_{A_K})=
\bigcup_{j=0}^{K-1}\{(1+2j,2+4j,z):z\in\C\}.
\end{equation}
Thus the reverse inclusion in Proposition~\ref{prop:ext-inclusion}, the equality with the full exceptional arrangement defined by Ext, and the converse to the Cohen--Macaulay implication are false, even at levels above the last slope.
\end{theorem}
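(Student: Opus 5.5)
The plan has three parts: describe the semigroup $\N A_K$ and its normalization, compute the generic Gevrey dimensions with the Cohen--Macaulay normalization, and transfer constancy from the normalization to $S_{A_K}$. For the last part I would rerun the argument of Proposition~\ref{prop:ext-inclusion}, replacing $S_A^{\{n\}}$ by the normalization $\widetilde S=\C[\R_{\ge0}A_K\cap\Z^3]$.

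\emph{Combinatorics.} First, $\Z A_K=\Z^3$: the columns $(1,0,0)$ and $(1,1,0)$ span $\Z^2\times0$, and $a_7=(2,4,1)$ then supplies $e_3$. The cone is pointed because the functional $(1,0,1)$ is positive on every column. The six columns with last coordinate zero generate the face semigroup $\N T_0\times 0$, since $(3,0)$ and $(3,12)$ are already sums of columns of $T_0$. By Example~\ref{ex:rankjump-seed}, this face has the single hole $b=(1,2)$.

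I would then show that the holes of $\N A_K$ in $\R_{\ge0}A_K\cap\Z^3$ are exactly the points
\[
(1+2j,\,2+4j,\,z)=(b,0)+j\,a_7+m\,a_8+(\text{a shift along }e_3),\qquad 0\le j\le K-1.
\]
The mechanism is as follows. The coordinate $z$ can only be produced by $a_7$ and $a_8$. Writing $z=j+mK$, reaching level $j$ with $j<K$ forces the use of $j$ copies of $a_7$, and the only obstruction is the missing degree $b$ in the slice.

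This description has three consequences.
\begin{itemize}
\item Adding $\Z a_8$ creates no new elements of the cone, because a hole is never filled by subtracting copies of $a_8$. Hence $S_{A_K}^{\{8\}}=S_{A_K}$.
\item The quotient $\widetilde S/S_{A_K}$ has a finite toric filtration by translates of $S_G$, where $G$ is the ray through $a_7$. I would check that $a_7$ spans an extremal ray of the cone.
\item From $0\to S\to\widetilde S\to\widetilde S/S\to0$, with $\widetilde S$ Cohen--Macaulay by \cite{Hoc72} and $\widetilde S/S$ of dimension one, we get $H^2_{\mathfrak m}(S)\simeq H^1_{\mathfrak m}(\widetilde S/S)\neq0$. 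Hence $S_{A_K}$ has depth $2$.
\end{itemize}
Local duality \eqref{eq:exceptional} then gives $\mathcal Z(S_{A_K})$ as the closure of the degrees of this $H^1$, namely the $K$ lines in \eqref{eq:ZK}. The index shift comes from the $K$ residues of $z$ modulo $K$; I expect these to collapse into one $z$-line for each $j$.

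\emph{Irregularity.} For $\widetilde S$, the module $H_0(\widetilde S,\beta)$ has no higher Euler--Koszul homology \cite[Theorem~6.6]{MMW05}, so its $L(r)$-cycles are independent of $\beta$ and equal the generic ones. I would compute these from the umbrella, as in Theorem~\ref{thm:gkz-app} and Example~\ref{ex:multiple-slopes}.
\begin{itemize}
\item The facet of $\conv(0,a_1,\ldots,a_7)$ not through $0$ that is met by $a_8/r$ has supporting form $(1/3,0,1/3)$. This form takes the value $1$ at $(3,0,0)$, $(3,12,0)$ and $a_7$.
\item So $a_8/r$ crosses that facet exactly at $r=K/3$, which exceeds $1$ because $K\ge4$. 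This is the only wall.
\item The jump equals the normalized volume of $\conv(0,(3,0,0),(3,12,0),(2,4,1))$, i.e. $|\det|=36$.
\end{itemize}
Lemma~\ref{lem:index} then gives \eqref{eq:constant-irreg} for $\widetilde S$ and every $\beta$.

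\emph{Transfer to $S_{A_K}$.} Every factor $S_G$ in the filtration of $\widetilde S/S$ has $a_8\notin G$. By the argument in the proof of Proposition~\ref{prop:ext-inclusion}, each $H_i(\widetilde S/S,\beta)$ is an external product with $\OO$ in the variable $x_8$, so it is noncharacteristic for $Y$. These are the regular Kummer-type modules and have zero irregularity at every level. Exactness of the Gevrey filtration in the perverse category, together with the long exact sequence, then shows that $H_0(S_{A_K},\beta)$ and $H_0(\widetilde S,\beta)$ have identical irregularity objects for every $\beta$.

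This yields \eqref{eq:constant-irreg}. Since $\mathcal Z(S_{A_K})\neq\varnothing$ while the jump loci are empty, the failure of the reverse inclusion and of the converse to the Cohen--Macaulay implication follows.

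The main obstacle is the exact hole computation. One must prove that every lattice point of the cone outside the listed lines is reachable, and that no $a_8$-translate fills a hole. I would attack this slice by slice in $z$, reducing to the two-dimensional statement of Example~\ref{ex:rankjump-seed} applied to shifted copies of the cone of $T_0$. After that, the main remaining check is the facet and volume computation giving $36$.
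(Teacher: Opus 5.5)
\textbf{There is a genuine gap.} It lies in the choice of the Cohen--Macaulay overring and in the transfer step.

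\textbf{The overring.} Your hole count is not the hole count of the normalization. The cone of $A_K$ is $\{0\le y\le 4x,\ z\ge0\}$, with extremal rays through $(1,0,0)$, $(1,4,0)$ and $(0,0,1)$. So $a_7=(1,0,0)+(1,4,0)+(0,0,1)$ is interior, not extremal, and $\C[\R_{\ge0}A_K\cap\Z^3]=\widetilde S_{T_0}[z]$ with $\deg z=(0,0,1)$. A degree of $S_{A_K}$ at level $j+K\ell$ with $0\le j<K$ has the form $(u+j(2,4),\,j+K\ell)$ with $u\in\N T_0$. Hence points such as $(0,0,1)$, or $(x,0,j+K\ell)$ with $1\le j<K$, are holes that lie off your lines. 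The class of a degree-$(0,0,j)$ element generates a free $\C[\partial_1,\partial_8]$-strip in $\widetilde S/S$. So $\widetilde S/S$ has dimension $2$, with toric factors along faces containing $a_8$ such as $\{a_1,a_5,a_8\}$ and $\{a_4,a_6,a_8\}$.

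Consequently your dimension-one claim and your hole description fail, and the depth and $\mathcal Z$ computations built on them do not go through as written. The set you list is exactly the degree set of $N/S$ for the paper's intermediate module
$N=\widetilde S_{T_0}[w,q]/(q^K-m^Kw)$, where $q=mz$, $w=z^K$ and $\deg m=(2,4)$.
This $N$ is free over the Cohen--Macaulay ring $\widetilde S_{T_0}[w]$ with basis $1,q,\ldots,q^{K-1}$, hence Cohen--Macaulay of dimension $3$. The quotient $Q=N/S\simeq(\C[q,w]/(q^K))(b,0)$ has depth and dimension one, with $H^1_{\mathfrak m}(Q)=Q_w/Q$. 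This gives depth $2$ and \eqref{eq:ZK}.

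\textbf{The transfer step.} Even with this $N$, the argument you borrow from Proposition~\ref{prop:ext-inclusion} cannot work, and your own hole set shows why. Each of your lines is stable under adding $a_8$, so $\partial_8$ acts injectively on the quotient. The factors of $Q$ are $\C[w](b_j)$ with $w=\partial_8$ and $b_j=(1+2j,2+4j,j)$. These are translates of $S_G$ for $G=\{a_8\}$; it is the degree-$a_7$ element $q$ that is nilpotent.

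This is forced for any proper Cohen--Macaulay overring $N'$. If all factors of $N'/S$ avoided $a_8$, then $\partial_8$ would be nilpotent on $N'/S$. Every degree of $N'$ would then lie in $(\N A_K+\Z a_8)\cap\R_{\ge0}A_K$, so $N'\subseteq S^{\{8\}}=S$, which you yourself prove.

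On the line $b_j+\C a_8$, the Euler--Koszul homology of the factor is $\OO_{\A^7}\boxtimes D_8/D_8(Kx_8\partial_8-(\beta_3-j))\otimes\bigwedge^i\C^2$. Its characteristic variety contains $T^*_YX$, so it is not noncharacteristic for $Y$.

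\textbf{The missing idea.} These Kummer modules are nevertheless regular along $Y$: they have a logarithmic pole, also at integral exponents. Hence their irregularity vanishes at every level. The $K$ lines have distinct first two coordinates, so at most one factor is active at a given $\beta$. The long exact sequence then gives $I_s(H_0(S,\beta))\simeq I_s(H_0(N,\beta))$.

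Your umbrella computation is correct and matches the paper once this $N$ is used: the facet $(x+z)/3=1$, the wall at $r=K/3$, and the volume $36$.
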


\begin{proof}
Use $T_0$ and its hole $b$ from Example~\ref{ex:rankjump-seed}. The fifth and sixth columns in \eqref{eq:AK} are redundant generators $3a_1$ and $3a_4$. In a semigroup algebra introduce $z$ of degree $(0,0,1)$, let $m\in S_{T_0}$ be the monomial of degree $(2,4)$, and put $q=mz$ and $w=z^K$. There is an isomorphism
\begin{equation}\label{eq:SA-presentation}
S:=S_{A_K}=S_{T_0}[w,q]/(q^K-m^K w).
\end{equation}
Indeed, division by the monic relation gives a spanning set $1,q,\ldots,q^{K-1}$ over $S_{T_0}[w]$. These terms have distinct $z$-exponents modulo $K$, so they are independent in the semigroup algebra. Equivalently, the degree set consists exactly of
\begin{equation}\label{eq:degrees-S}
(u+j(2,4),\ j+K\ell),\quad u\in\N T_0,\quad 0\leq j<K,\quad\ell\geq0.
\end{equation}
Saturating by the last column permits $\ell\in\Z$. Intersecting with the cone forces $j+K\ell\geq0$, hence $\ell\geq0$. This proves $S^{\{8\}}=S$.

Consider the finite toric module
\[
N=\widetilde S_{T_0}[w,q]/(q^K-m^K w).
\]
It is free with basis $1,q,\ldots,q^{K-1}$ over the Cohen--Macaulay ring $\widetilde S_{T_0}[w]$, and is therefore Cohen--Macaulay of dimension $3$. The quotient is
\begin{equation}\label{eq:QK}
Q=N/S\simeq \bigl(\C[q,w]/(q^K)\bigr)(b,0),
\quad \deg q=(2,4,1),\quad\deg w=(0,0,K).
\end{equation}
Every positive monomial of $S_{T_0}$ kills its unique normalization hole. This proves that the first six differential variables act by zero in \eqref{eq:QK}, while multiplication by $q$ advances the residue $j$ until the last step, where multiplication by $m^K w$ kills the hole. These actions determine the displayed module isomorphism.

The quotient $Q$ has both depth and dimension $1$: it is free over $\C[w]$ of rank $K$ and $q$ is nilpotent. The depth lemma \cite[Chapter~1]{BH98}, applied to $0\to S\to N\to Q\to0$, gives $\depth S=2$. Since $N$ has no local cohomology in degrees below $3$, this sequence also gives
\[
H^0_{\mathfrak m}(S)=H^1_{\mathfrak m}(S)=0,
\quad H^2_{\mathfrak m}(S)\simeq H^1_{\mathfrak m}(Q).
\]
Since the radical of the positive ideal on $Q$ is $(q,w)$ and $q$ is nilpotent, we have $H^1_{\mathfrak m}(Q)=Q_w/Q$. Its degrees are
\[
b_j-\ell a_8,\quad b_j=(1+2j,2+4j,j),\quad 0\leq j<K,\quad\ell\geq1.
\]
Their Zariski closure is exactly \eqref{eq:ZK}. We can also verify this using Ext. Put $b_0=(1,2,0)$, $J=(\partial_1,\ldots,\partial_6,\partial_7^K)\subset P$, and $\Delta=\sum_{i=1}^6a_i+Ka_7$. Then $Q=(P/J)(b_0)$ is a complete intersection module. Since $N$ is Cohen--Macaulay of dimension $3$ over the regular ring $P$ of dimension $8$, the Auslander--Buchsbaum formula \cite[Chapter~1]{BH98} gives projective dimension $5$. The Ext long exact sequence gives
\[
\Ext_P^6(S,P)\simeq\Ext_P^7(Q,P)
\simeq(P/J)(-b_0-\Delta),
\]
and the other terms $\Ext_P^{8-q}(S,P)$ for $q<3$ vanish. This follows from the graded Koszul resolution of the displayed seven generators of $J$, with the grading of the canonical module as in \cite[Chapter~3]{BH98}. Using $\omega_P=P(\varepsilon_A)$ and negating the resulting quasidegrees gives precisely \eqref{eq:ZK}, in agreement with the calculation using local cohomology.

We next compute the Euler--Koszul homology of $Q$. The filtration of $Q$ by powers of $q$ has factors $\C[w](b_j)$. Their Euler--Koszul homology vanishes unless $\beta\in b_j+\C a_8$. On this line, the homology in degree $i$ is
\begin{equation}\label{eq:Qhom}
\OO_{\A^7}\boxtimes
D_8/D_8\bigl(Kx_8\partial_8-(\beta_3-j)\bigr)
\otimes_\C\bigwedge^i\C^2.
\end{equation}
The two Euler directions that act trivially give the exterior factor. The lines have different first two coordinates. Hence, at any parameter, at most one filtration factor has nonzero homology; all other factors are acyclic. The long exact sequences then give \eqref{eq:Qhom} for $H_i(Q,\beta)$ on the relevant line, and zero off their union. The Kummer module in \eqref{eq:Qhom} is regular along $x_8=0$ for every parameter, including integral ones. Its meromorphic connection has a logarithmic pole, as in the theory of regular singular connections \cite[Chapter~II]{Del70}; point-supported subquotients at integral exponents remain regular in the holonomic category \cite[Chapter~5]{HTT08}. Therefore $I_s(H_i(Q,\beta))=0$ for all $i,s,\beta$.

Since $N$ is Cohen--Macaulay, its higher Euler--Koszul homology modules vanish for all $\beta$ by \cite[Theorem~6.6]{MMW05}. Consequently, $\CC^L(H_0(N,\beta))$ equals the Euler characteristic cycle of the Euler--Koszul complex, which is independent of $\beta$ by \cite[Theorem~4.11]{SW08}. This holds for every projective weight $L$, including $L(r)$. The long exact sequence in Euler--Koszul homology for $0\to S\to N\to Q\to0$, together with the regularity just proved, implies
\[
I_s(H_0(S,\beta))\simeq I_s(H_0(N,\beta))
\]
for every parameter. Since $N/S$ has smaller dimension, the constant Euler cycle of $N$ is the generic cycle of $S$. Thus it remains to compute the generic irregularity.

The columns of $T_0$ generate $\Z^2$, and $a_7$ differs from an element of their semigroup by $(0,0,1)$; hence $\Z A_K=\Z^3$. The cone is the product of the pointed cone of $T_0$ with the positive third ray. After deleting $a_8$, the polytope is the simplex with vertices $0,p_*,q_*,c_*$, where
\[
p_*=(3,0,0),\quad q_*=(3,12,0),\quad c_*=(2,4,1).
\]
Its only facet not through zero is supported by $\ell(x,y,z)=(x+z)/3=1$. The moving point $a_8/r$ meets that plane exactly at $r=K/3$. Before this value $c_*$ is inside $\conv(0,p_*,q_*,a_8/r)$, since its barycentric coefficients are $1/3,1/3,r/K$ and have sum below $1$. After this value the new facet not containing $a_8$ is $\{p_*,q_*,c_*\}$. The generic Gevrey formula \cite[equation~(7-2)]{BFF20} therefore gives its normalized volume
\[
|\det(p_*,q_*,c_*)|=36.
\]
At the wall, this contribution is counted at $s=K/3$, according to the convention used above. The slope criterion of \cite[Corollary~4.18]{SW08} proves that $K/3$ is the sole slope. This proves \eqref{eq:constant-irreg}.
\end{proof}

The family in Theorem~\ref{thm:counterexample} also illustrates a point concerning \cite[Corollary~7.7]{BFF20}. For parameters on the exceptional lines, the face $G=\{a_8\}$ occurs among the maximal ranking faces, but the corresponding face system is regular and hence $d_s(G)=0$. Thus the occurrence of such a face alone does not imply a strict Gevrey jump.

The ordinary rank also detects the contribution from regular modules. The polytope of $A_K$ has normalized volume $|\det(p_*,q_*,a_8)|=36K$, and \eqref{eq:Qhom} has ranks $1,2,1$ on each active line. Consequently
\[
\rank M_{A_K}(\beta)=36K+\one_{\mathcal Z(S_{A_K})}(\beta),
\]
even though \eqref{eq:constant-irreg} is constant in $\beta$. More precisely, on an active line the difference from the constant cycle of $H_0(N,\beta)$ is the cycle of \eqref{eq:Qhom} for $i=0$, because the exterior coefficients are $2-1=1$. In this additional cycle, the coefficients of the zero section and of $C_Y$ are both $1$ for every $L(r)$. Their contributions to $\phi_r$ cancel at every level. Thus the extra characteristic-cycle contribution is nonzero but has zero irregularity.

\begin{proposition}\label{prop:CM-cover}
Let $N$ be a finite full-dimensional Cohen--Macaulay toric module containing $S_A$, and assume that $Q=N/S_A$ has smaller dimension. Then, at every level and parameter,
\begin{equation}\label{eq:CM-defect}
d_s(A,\beta)-d_s(A)
=d_s(H_1(Q,\beta))-d_s(H_0(Q,\beta)).
\end{equation}
The same formula holds for the generic dimension of an individual graded piece. By contrast, for a specialization along a parameter curve, the difference between the fiber values is the single nonnegative term in \eqref{eq:Gtor} or \eqref{eq:Etor}.
\end{proposition}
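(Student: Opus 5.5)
We use the short exact sequence $0\to S_A\to N\to Q\to0$ of finite toric modules and its long exact sequence in Euler--Koszul homology at a fixed parameter $\beta$. Since $N$ is Cohen--Macaulay and full-dimensional, \cite[Theorem~6.6]{MMW05} gives $H_i(N,\beta)=0$ for $i>0$ and every $\beta$. The long exact sequence then splits into two pieces. The first is the four-term exact sequence
\[
0\longrightarrow H_1(Q,\beta)\longrightarrow H_0(S_A,\beta)\longrightarrow H_0(N,\beta)\longrightarrow H_0(Q,\beta)\longrightarrow0 .
\]
The second is a set of isomorphisms $H_{i+1}(Q,\beta)\simeq H_i(S_A,\beta)$ for $i\ge1$, which we do not need. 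All modules here are holonomic, since toric modules give holonomic Euler--Koszul homology.

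Next we apply $d_s$ to the four-term sequence. After Lemma~\ref{lem:index}, we recorded that $d_s$ is additive on short exact sequences of holonomic modules; this comes from the perverse exactness of the Gevrey filtration \cite{LM99}. The same additivity holds for $e_{a,b}$, by \eqref{eq:grade}. Splitting the four-term sequence into two short exact sequences through the image of $H_0(S_A,\beta)\to H_0(N,\beta)$ gives
\[
d_s(M_A(\beta))=d_s(H_0(N,\beta))+d_s(H_1(Q,\beta))-d_s(H_0(Q,\beta)).
\]
One point needs care: at a level $s$ that is a slope, the additivity must hold with the right-hand convention. To handle this, we choose a single rational representative level to the right of $s$ that is a non-slope level for all the finitely many modules involved. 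Their slopes lie in a finite set, by the slope analysis for toric modules used in Lemma~\ref{lem:tor-control}.

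It remains to identify $d_s(H_0(N,\beta))$ with the generic value $d_s(A)$, independently of $\beta$. By \cite[Theorem~4.11]{SW08}, the Euler characteristic cycle of the Euler--Koszul complex of $N$ does not depend on $\beta$ at any projective weight, including every $L(r)$. Because the higher homology vanishes, this cycle equals $\CC^{L(r)}(H_0(N,\beta))$. The index formula \eqref{eq:G} therefore gives a constant value of $d_s(H_0(N,\beta))$. To see that this constant is $d_s(A)$, we take $\beta$ generic. Off the proper closed set where $H_0(Q,\beta)$ or $H_1(Q,\beta)$ is nonzero, both terms vanish, so $d_s(M_A(\beta))=d_s(H_0(N,\beta))$ there. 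That set is proper because $Q$ has smaller dimension, which confines its Euler--Koszul homology to finitely many translates of proper face subspaces. Substituting into the displayed identity proves \eqref{eq:CM-defect}. The same computation with $e_{a,b}$ and levels $a,b$ chosen near $\rho$ gives the graded version.

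The final sentence of the proposition is a comparison rather than a claim to prove, and it follows from Theorem~\ref{thm:gevrey-general}. The main obstacle is justifying additivity of $d_s$ exactly at a slope level across the whole four-term sequence. We intend to settle it by fixing levels uniformly, as in the proof of Theorem~\ref{thm:gevrey-general}.
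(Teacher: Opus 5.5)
Your proposal is correct and follows the paper's proof. The shared steps are: vanishing of the higher Euler--Koszul homology of $N$; the four-term sequence split into two short exact sequences; additivity of $d_s$ (and hence of $e_{a,b}=d_b-d_a$); and identification of the $\beta$-independent value $d_s(H_0(N,\beta))$ with $d_s(A)$ through the constant Euler cycle of $N$ from \cite[Theorem~4.11]{SW08}, evaluated at a generic $\beta$ where $Q$ contributes nothing. Your separate treatment of slope levels is harmless but unnecessary, since the paper already records additivity at every real level with the right-hand convention; also, finiteness of slopes for the finitely many modules involved comes from their holonomicity, not from Lemma~\ref{lem:tor-control}.
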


\begin{proof}
The higher homology of $N$ vanishes. Split
\[
0\to H_1(Q,\beta)\to H_0(S_A,\beta)\to H_0(N,\beta)\to H_0(Q,\beta)\to0
\]
into two short exact sequences and apply the additivity of generic irregularity. The constant Euler cycle of $N$ is the generic cycle of $S_A$, as in the proof above. Graded dimensions are differences of two additive generic irregularities, so the same identity holds for them. The curve assertion is Theorem~\ref{thm:gevrey-general}.
\end{proof}

In the family of Theorem~\ref{thm:counterexample}, both terms on the right of \eqref{eq:CM-defect} vanish even though the Euler--Koszul homology of $Q$ is nonzero. More generally, \eqref{eq:CM-defect} measures only the part of that homology detected by irregularity along $Y$. The exceptional arrangement defined by local cohomology records the full homological obstruction, whereas the Gevrey specialization formulas record its irregular contribution.

\bigskip 

\noindent\textbf{AI disclosure.}
The author used ChatGPT 6-Astra to explore proof strategies, refine mathematical arguments, and assist with manuscript revisions. The author takes full responsibility for the mathematical content and the final text.

\bigskip 


\end{document}